\title{The bubble algebras at roots of unity}
\author{ Mufida M. Hmaida}
\date{}
\begin{document}

\newcommand{\Pn}{\mathbb{P}_{n,2}(\delta_0 , \delta_1 ) }
\newcommand{\Pm}{\mathbb{P}_{n,m} (\delta_0 , \dots , \delta_{m-1} )}
\newcommand{\PN}{\mathbb{P}_{n,2} }
\newcommand{\pn}{\mathcal{P}_{n,2} }
\newcommand{\PM}{\mathbb{P}_{n,m} }
\newcommand{\ppm}{\mathcal{P}_{n,m} }

\newcommand{\Tn}{\mathbb{T}_{n,2}(\delta_0 , \delta_1 ) }
\newcommand{\TN}{\mathbb{T}_{n,2} }
\newcommand{\Tm}{\mathbb{T}_{n,m}(\delta_0 , \dots , \delta_{m-1} ) }
\newcommand{\TM}{\mathbb{T}_{n,m} }
\newcommand{\tn}{\mathcal{T}_{n,2} }
\newcommand{\tm}{\mathcal{T}_{n,m} }

\newcommand{\Sn}{\mathfrak{S}_{n,2}}
\newcommand{\Sm}{\mathfrak{S}_{n,m}}
\newcommand{\Sym}{\mathfrak{S}}

\newcommand{\TL}{\mathsf{TL}_{n}( \delta) }
\newcommand{\Tl}{\mathsf{TL}_{n} }

\newcommand{\Pp}{\mathbb{P}_{n} ( \delta ) }
\newcommand{\ie}{\mathsf{e}}

\newcommand{ \lambd}{ \underline{\lambda} }
\newcommand{ \muu }{ \underline{\mu} }

\newcommand{\Top}{\operatorname{top}}
\newcommand{\Bot}{\operatorname{bot}} 
\newcommand{\Hom}{\operatorname{Hom}}
\newcommand{\im}{\operatorname{im}}
\newcommand{\dom}{\operatorname{dom}}
\newcommand{\Par}{\operatorname{Par}}
\newcommand{\id}{\mathit{id}}
\newcommand{\Rad}{\operatorname{Rad}}
\newcommand{\rank}{\operatorname{rank}}
\newcommand{\Ker}{\operatorname{Ker}}

\newcommand{\A}{\mathbb{A}  }
\newcommand{\F}{\mathbb{C}  }
\newcommand{\Z}{\mathbb{Z}  }
\newcommand{ \Cat }{ \mathsf{ C } }  
\newcommand{ \Car }{ \mathbf{ \mathscr{ C } } }   
\newcommand{\Gr}{\mathsf{ G } }
\newcommand{\M}{\mathsf{ M } }
\newcommand{\V }{\mathsf{ V } }
\newcommand{\RV }{\mathsf{ R } }
\newcommand{\Lcell }{\mathsf{ L } }
\newcommand{ \Cc }{ \mathfrak{ C }}
\newcommand{ \I }{ \mathcal{ I }}

\theoremstyle{plain}
\newtheorem{theorem}{Theorem}[section]
\newtheorem{corollary}[theorem]{Corollary}
\newtheorem{lemma}[theorem]{Lemma}
\newtheorem{example}{Example}[theorem]
\newtheorem{proposition}[theorem]{Proposition}
\newtheorem{axiom}[theorem]{Axiom}
\theoremstyle{definition}
\newtheorem{definition}[theorem]{Definition}
\theoremstyle{remark}
\newtheorem{remark}[theorem]{Remark}

\maketitle

\textbf{Abstract.} We introduce multi-colour partition algebras $ \Pm$, which are generalization of both bubble algebras and partition algebras, then define the bubble algebra $ \Tm $ as a sub-algebra of the algebra $ \Pm $. We present general techniques to determine the structure of the bubble algebra over the complex field in the non-semisimple case.

\section{Introduction}    

 \hspace{15pt}
 In 2003, Grimm and Martin\cite{GM2} introduced a new construction, called the bubble algebra, this algebra defined entirely diagrammatically.  They investigated its generic representations and proved that it is semi-simple when none of the parameters $ \delta_i $ is a root of unity. Later,  Jegan\cite{JM} showed that the bubble algebra is a cellular algebra in the sense of Graham and Lehrer\cite{GL1}, and that it is a tower of recollement when all of the $ \delta_i $ are non-zero, as it is defined in \cite{CMPX} or \cite{MGP}. The notion of a cellular algebra was first introduced by Graham and Lehrer \cite{GL1}.  Also Jegan\cite{JM} showed how certain idempotent sub-algebra of the bubble algebra corresponded to tensor products of the Temperley-Lieb algebras and investigated the homomorphisms between the cell modules of the algebra $ \Tm $.

In this paper, we use a technique consist of reducing problems in the bubble algebra to problems in the Temperley-Lieb algebra. The representation theory of the Temperley-Lieb algebra is well known, see Martin \cite{M4}, Ridout and Saint \cite{RS1} and Westbury \cite{W1}.  All the algebras in this paper are over the complex field and all the modules are left modules.

The main results of the paper are Theorems \ref{thm5:Rad3} and \ref{thm5:general}, which determine radical series of cell modules for the algebra $ \Tm $ over the complex field and for all the tuples $ (\delta_0, \dots , \delta_{m-1} ) $ in case $ m=2 $ and $ m>2 $ respectively.

\section{Preliminaries} \label{sec1}

 \hspace{15pt}
 For $n \in \mathbb{N}$, the symbol $ \mathcal{P}_n$ denotes the set of all partitions of the set $\underline{n} \cup \underline{n'}$, where $\underline{n}=\{1, \dots , n \}$ and $\underline{n'}=\{ 1' ,  \dots , n' \}$.

Each individual set partition can be represented by a graph, the graph is drawn in a rectangle with $ n $ nodes on the top row represent the elements in the set $ \underline{n} $ and with $ n $ nodes on the bottom row of the rectangular represent the elements in the set $ \underline{n}'$, and the elements that in the same part at a partition, are represented as lines drawn connected their nodes inside the rectangular. Any diagrams are regarded as the same diagram if they representing the same partition.

Now the composition $\beta \circ \alpha $ in $\mathcal{P}_n$, where $\alpha, \beta \in \mathcal{P}_n$, is the partition obtained by placing $ \alpha $ above $ \beta $, identifying the bottom vertices of $ \alpha $ with the top vertices of $ \beta $, and ignoring any connected components that are isolated from boundaries. This product on $\mathcal{P}_n$ is associative and well-defined up to equivalence.

A $(n_1, n_2)$-partition diagram  for any $ n_1, n_2 \in \mathbb{N}^+ $ is  a diagram representing a set partition of the set  $\underline{n_1} \cup \underline{n_2'} $ in the obvious way.

  We can generalize the product on $\mathcal{P}_n$ to define a product of $(n,m)$-partition diagrams when it is defined: let $ \alpha $ be $(n_1,n_2)$-diagram and $ \beta $ be $(m_1,m_2)$-diagram, $ \beta \circ \alpha $ is defined if and only if $n_2=m_1$ and it is $(n_1,m_2)$-diagram. For example, see the following figure.
 \begin{center}
 \begin{tikzpicture}
          \draw[gray]  (-4,0.35) rectangle (-2.5,1);
           \draw (-2.75,0.35)--(-3.5,1);
           \draw (-3.75,0.35)--(-3,1);                          
          \draw [domain=0:180] plot ({0.25*cos(\x)-3.25}, {0.15*sin(\x)+0.35 });
          
          \node[above] at (-2.3 ,0.35) {\sffamily $ \circ $};
          
          \draw[gray]  (-2,0.35) rectangle (-0.7,1);
           \draw (-1.25,0.35)--(-1,1);
           \draw (-1.5,0.35)--(-1.25,1);         
           \draw [ domain=360:180] plot ({0.25*cos(\x)-1.5}, {0.15*sin(\x)+1});                    
          
\node[above] at (-0.3,0.35) {\sffamily = };
          \draw[gray]    (0,0) rectangle (1.5,0.65);
   \draw[gray]  (0,0.75) rectangle (1.5,1.4);
          \draw (0.25,0)--(0.75, 0.65 ) ;
          \draw (0.5,0.65 )--(1.25,0) ;
          \draw [domain=0:180] plot ({0.25*cos(\x)+0.825}, {0.15*sin(\x)});
           \draw (0.5,0.75)--(0.9, 1.4 );
           \draw (0.75,0.75)--(1.2, 1.4 );
           \draw [ domain=360:180] plot ({0.3*cos(\x)+0.6}, {0.15*sin(\x)+ 1.4 });  

\node[above] at (1.75,0.35) {\sffamily = };

          \draw[gray]  (2,0.35) rectangle (3.5,1);

           \draw (2.25,0.35)--(3.2,1);
           \draw (3.25,0.35)--(2.9,1);         
           \draw [ domain=360:180] plot ({0.3*cos(\x)+2.6}, {0.15*sin(\x)+1});                    
          \draw [domain=0:180] plot ({0.25*cos(\x)+2.825}, {0.15*sin(\x)+0.35 });
\end{tikzpicture}
 \end{center}

The diagrams representing partitions that spanning the Temperley-Lieb algebra $ \TL $ over (say) the complex field are planar (non-crossing) and their parts all have size two. Thus the following diagrams
\begin{center}
\begin{tikzpicture}
          \draw[gray] (0,0) rectangle (1.25,0.8);
           \draw (0.25,0) --(0.25,0.8);
           \draw (0.5,0) --(1,0.8);
            \draw[ domain=180:360] plot ({0.125*cos(\x)+0.625}, {0.3*sin(\x)+0.8});
            \draw[ domain=0:180] plot ({0.125*cos(\x)+0.875}, {0.25*sin(\x)+0});
\end{tikzpicture} 
\;\;\;\;\;\;\;
\begin{tikzpicture}
          \draw[gray] (0,0) rectangle (1.25,0.8);
            \draw[ domain=180:360] plot ({0.125*cos(\x)+0.625}, {0.2*sin(\x)+0.8});
            \draw[ domain=180:360] plot ({0.375*cos(\x)+0.625}, {0.3*sin(\x)+0.8});
            \draw[ domain=0:180] plot ({0.125*cos(\x)+0.375}, {0.25*sin(\x)+0});
            \draw[ domain=0:180] plot ({0.125*cos(\x)+0.875}, {0.25*sin(\x)+0});
\end{tikzpicture} 
\end{center}
are representing basis elements of the algebra $ \mathsf{TL}_{4}( \delta) $.

We next briefly describe  the cell modules of the algebra $ \TL $, which will be used in this paper.

A diagram representing a partition in the algebra $ \TL $ can be cut to construct a half-diagram such that all arcs on the top edge are above the cut, all arcs on the bottom edge are below the cut and each propagating line is only cut once. A half-diagram has $p$ arcs called an $(n,p)$-link state. For example, the half-diagram 
\begin{center}
\begin{tikzpicture}
          \draw (0,0.5) -- (2,0.5);
           \draw (0.75,0.1) --(0.75,0.5);
            \draw[ domain=180:360] plot ({0.125*cos(\x)+0.375}, {0.2*sin(\x)+0.5});
            \draw[ domain=180:360] plot ({0.125*cos(\x)+1.375}, {0.2*sin(\x)+0.5});
            \draw[ domain=180:360] plot ({0.375*cos(\x)+1.375}, {0.3*sin(\x)+0.5});
\end{tikzpicture}
\end{center}
 is a $ (7,3) $-link state.

As the number of propagating lines can not increase by the multiplication, we can define  left $ \Tl  $-modules $ \M_{n,p} $ which are spanned by $(n,p')$-link states with $ p' \geq p $ with action defined by putting the $ \TL $-diagram above the half-diagram then proceeds as with $ \TL $ multiplication, and finally omit any new bottom arcs. Note that 
$$ \M_{n,[^n /_2 ] } \subset \ldots \subset \M_{n,1} \subset \M_{n,0} . $$

The Temperley-Lieb algebra is a cellular algebra, with  the involution sending each diagram to its reflection in the horizontal plane, indexing set $ \{ 0, 1, \dots , [^n /_2 ] \} $ and cell modules $ \V_{n,p} := \M_{n,p} / \M_{n,p+1} $, see  \cite{GL1}. The dimension of  $  \V_{n,p}  $ is  $  \binom{n}{p} - \binom{n}{p-1} := \mathsf{d}_{n,p}$. Note that $ \binom{n}{-1}=0 $.

On each module $ \V_{n,p} $, there is a bilinear form $ \langle \; , \; \rangle_{n,p,\delta } $ defined as follows: if $ x $ and $ y $ are two $(n,p)$-link states, the scalar $ \langle x , y \rangle_{n,p,\delta } $ is computed by reflecting $x$ in a horizontal axis and identifying its vertical border with that of $y$. The value $ \langle x , y \rangle_{n,p,\delta } $ is then non-zero only if every defect (an unconnected node) of $x$ ends up being connected to one of $y$, and in this case $ \langle x , y \rangle_{n,p,\delta } = \delta^l$ where $l$ is the number of closed loops which is obtained from connecting $ x $ and $ y $. For more details see section 9.5.2 in \cite{M4}.

The matrix $ \Gr_{n,p, \delta } $ is defined to be the Gram matrix for the module $ \V_{n,p} $ that represent the form $\langle \; , \; \rangle _{n,p,\delta } $ with respect to a basis that contains all $(n,p)$-link states.

Let $ M $ be a module whose a bilinear form $ \langle \; , \; \rangle $. The radical of this form on $ M $ is the set $ \{ x \in M \mid  \langle x , y \rangle =0 \; \text{for all } \; y \in  M \}$. Define $ \RV_{n,p, \delta } $ to be the radical of the previous bilinear form on the module $ \V_{n,p} $.

As we work over a field, the radical $ \RV_{n,p} $ is a  sub-module of  $  \V_{n,p} $. If $ \delta \neq 0 $, then $ \V_{n,p} $ is cyclic and indecomposable. Moreover,  $ \Lcell_{n,p} := \V_{n,p} /  \RV_{n,p} $ is irreducible. The cell modules $ \V_{n,p} $ of the algebra $ \TL $ are irreducible except for particular values of the scalar $ \delta $. Throughout this paper, let $ \delta= q+q^{-1} $ with $ q \in \mathbb{C} $.

\begin{proposition}\cite[Section 6.4, Theorem 1]{M4}\textbf{.}  \label{thm1:TLsemisimple}
If $ q $ is not a root of unity, then the algebra $ \TL $ is semi-simple, and the modules $ \V_{n,p} $, where $ 0 \leq p \leq [^n/_2]  $, form a complete set of non-isomorphic irreducible modules of the algebra $ \TL $.
\end{proposition}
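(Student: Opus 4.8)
The plan is to reduce the statement to the non-vanishing of the Gram determinants $\det\Gr_{n,p,\delta}$ and then to establish that non-vanishing by an induction on $n$. I would first invoke the general machinery of cellular algebras (Graham--Lehrer \cite{GL1}) for the cell datum of $\TL$ recorded above. Three facts are needed: (a) for any $p$ with $\RV_{n,p,\delta}\neq\V_{n,p}$ the head $\Lcell_{n,p}=\V_{n,p}/\RV_{n,p,\delta}$ is irreducible, and the $\Lcell_{n,p}$ so obtained form a complete set of pairwise non-isomorphic simple $\TL$-modules; (b) $\sum_{p}(\dim\V_{n,p})^{2}=\dim\TL$; (c) $\TL$ is semisimple if and only if $\RV_{n,p,\delta}=0$ for every $p$, equivalently $\det\Gr_{n,p,\delta}\neq 0$ for every $0\le p\le\lfloor n/2\rfloor$. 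Granting this, it suffices to show that $q$ not a root of unity forces $\det\Gr_{n,p,\delta}\neq 0$ for all $p$: then every $\V_{n,p}$ is already irreducible, these are pairwise non-isomorphic and exhaust the simples by (a), and $\TL$ is semisimple by (c).

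For the determinants I would argue by induction on $n$, using the inclusion $\mathsf{TL}_{n-1}(\delta)\hookrightarrow\TL$ together with the branching rule for cell modules, $\V_{n,p}\!\downarrow\,\cong\,\V_{n-1,p-1}\oplus\V_{n-1,p}$, which one reads off link states according to whether node $n$ is a defect or an arc endpoint (with the convention that out-of-range indices give the zero module). Restricting the form $\langle\,,\,\rangle_{n,p,\delta}$ along this decomposition yields a recursion expressing $\det\Gr_{n,p,\delta}$ in terms of $\det\Gr_{n-1,p-1,\delta}$, $\det\Gr_{n-1,p,\delta}$ and one further factor that turns out to be a power of a quantum integer $[k]_{q}=(q^{k}-q^{-k})/(q-q^{-1})$ (a value of a Chebyshev-type polynomial in $\delta$). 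Iterating gives the classical closed form $\det\Gr_{n,p,\delta}=\prod_{k}[k]_{q}^{\,a_{n,p,k}}$ with exponents $a_{n,p,k}\ge 0$. Since $[k]_{q}=0$ exactly when $q^{2k}=1$, the hypothesis that $q$ is not a root of unity makes every factor, hence the whole product, non-zero, which finishes the argument.

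I expect the main obstacle to be the precise bookkeeping in the determinant recursion — identifying the extra factor at each step as exactly the right quantum integer and checking that no cancellation among the $[k]_{q}$ occurs, i.e.\ that the exponents $a_{n,p,k}$ are genuinely non-negative — rather than any conceptual difficulty; the cellular-algebra input and the root-of-unity condition are then routine to apply. An alternative would be to prove irreducibility of each $\V_{n,p}$ for generic $\delta$ more directly, exploiting (b): $\sum_{p}(\dim\V_{n,p})^{2}=C_{n}$, the $n$-th Catalan number, so a short list of cell modules of the prescribed dimensions already accounts for all of $\TL$; but this still requires a non-degeneracy input somewhere, so I would favour the Gram determinant computation, or else simply cite the version in \cite{M4}.
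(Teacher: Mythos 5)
The paper does not actually prove this proposition: it is imported from \cite{M4} (Section 6.4, Theorem 1) as a black box, so there is no internal argument to compare yours against. Your outline is essentially the standard proof found in the cited sources \cite{M4,W1,RS1}: use the Graham--Lehrer cellular machinery \cite{GL1} to reduce both semisimplicity and the classification of simples to the non-vanishing of the determinants $\det \Gr_{n,p,\delta}$, then evaluate those determinants in terms of quantum integers. Two points of caution if you carry it out. First, the restriction of $\V_{n,p}$ to $\mathsf{TL}_{n-1}(\delta)$ is in general only \emph{filtered} by $\V_{n-1,p}$ and $\V_{n-1,p-1}$, not a direct sum of them; what your determinant recursion actually needs is the block-triangular shape of the Gram matrix with respect to the basis of link states ordered by the role of node $n$, which does hold, so the argument survives but should be phrased that way rather than via a claimed module decomposition. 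Second, the classical closed form is a product of ratios $\bigl([\,n-2p+j+1\,]_q/[\,j\,]_q\bigr)^{\dim \V_{n,p-j}}$ over $1\le j\le p$, so you can sidestep the bookkeeping about non-negative exponents entirely: for $q$ not a root of unity every quantum integer $[k]_q$ with $k\ge 1$ is non-zero, hence every factor (numerator and denominator) is non-zero and the determinant does not vanish. Note also that $q$ not a root of unity forces $\delta\neq 0$ (since $\delta=0$ exactly when $q=\pm i$), so the cyclicity and indecomposability facts you quietly rely on do apply.
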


Let $ q $ be a root of unity and let $ l $ be the minimal positive integer satisfying $ q^{2l}=1 $. The module $ \V_{n,p} $ (or the pair $ (n,p) $) is called critical if $ q^{2(n-2p+1)}=1 $.

\begin{theorem} \cite[Section 7.3, Theorem 2]{M4}\textbf{.}  \label{thm1:cellhom2}
If $ 0 \leq p_1 - p_2 < l $ and $  n-p_1 - p_2 +1 = 0 \pmod{ l } $, then there is a non-trivial homomorphism $ \theta :  \V_{n,p_2} \rightarrow \V_{n,p_1} $. Furthermore, the kernels and co-kernels of the homomorphism $ \theta $ are irreducible. Otherwise, there is no non-trivial homomorphism from $ \V_{n,p_2} $ to $ \V_{n,p_1} $.
\end{theorem}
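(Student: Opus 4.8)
\emph{Proof proposal.} I would deduce the statement from the basic structure theory of the cell modules of $\Tl$ over $\mathbb{C}$, which is where the real content lies; write $\Lcell_{n,p}=\V_{n,p}/\RV_{n,p}$, irreducible since $\delta\neq 0$. The two facts to establish are: \textbf{(a)} $\V_{n,p}$ is reducible if and only if $\det\Gr_{n,p,\delta}=0$, and this occurs exactly when $(n,p)$ is \emph{not} critical but admits a (then unique) partner $p'$ with $0<p-p'<l$, $\ p'\ge 0$ and $n-p-p'+1\equiv 0\pmod l$; and \textbf{(b)} when $\V_{n,p}$ is reducible it has exactly two composition factors, with $\RV_{n,p}\cong\Lcell_{n,p'}$ for the $p'$ of (a). Granting these, the theorem follows formally. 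We may assume $p_1>p_2$ (the case $p_1=p_2$ being trivial, and in the usual reading excluded). If $0<p_1-p_2<l$ and $n-p_1-p_2+1\equiv 0\pmod l$, then $p_2$ is precisely the partner of $p_1$, so by (a)--(b) the module $\V_{n,p_1}$ is reducible with $\RV_{n,p_1}\cong\Lcell_{n,p_2}$; since $\Lcell_{n,p_2}$ is the head of $\V_{n,p_2}$ and a submodule of $\V_{n,p_1}$, the composite
\[
\theta\colon\ \V_{n,p_2}\ \twoheadrightarrow\ \Lcell_{n,p_2}\ \cong\ \RV_{n,p_1}\ \hookrightarrow\ \V_{n,p_1}
\]
is a non-zero homomorphism with image $\RV_{n,p_1}$, hence with cokernel $\Lcell_{n,p_1}$ and kernel $\RV_{n,p_2}$, both irreducible by (b) (the kernel being $0$ if $\V_{n,p_2}$ happens to be irreducible). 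Conversely, given a non-zero $\theta\colon\V_{n,p_2}\to\V_{n,p_1}$ with $p_1>p_2$, its image is a non-zero submodule of $\V_{n,p_1}$ whose head is $\Lcell_{n,p_2}$ (every non-zero quotient of $\V_{n,p_2}$ has the simple head $\Lcell_{n,p_2}$, as $\RV_{n,p_2}$ is the unique maximal submodule); if $\V_{n,p_1}$ were irreducible the image would be all of it, forcing $\Lcell_{n,p_2}\cong\Lcell_{n,p_1}$ and hence $p_1=p_2$, a contradiction, so $\V_{n,p_1}$ is reducible, and by (b) its only proper non-zero submodule is $\RV_{n,p_1}\cong\Lcell_{n,p'}$, whose head must match $\Lcell_{n,p_2}$, i.e.\ $p_2=p'$, which by (a) is the asserted numerical condition.

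It remains to establish (a) and (b). For (a) I would compute $\det\Gr_{n,p,\delta}$ via the standard recursion obtained by restricting the link modules along $\Tl\to\mathsf{TL}_{n-1}$ (equivalently, from the eigenvalues of a Jones--Wenzl/central element), giving the classical closed product formula in which each factor is a ratio of quantum integers $[k]_q=(q^k-q^{-k})/(q-q^{-1})$ with $k$ ranging over an explicit interval around $n-2p$; since $[k]_q=0$ exactly when $l\mid k$, the vanishing of the product translates --- by an elementary but parity-of-$l$--sensitive computation --- into the congruence in (a). For (b), the quickest route is quantum Schur--Weyl duality, $\Tl\cong\operatorname{End}_{U_q(\mathfrak{sl}_2)}(V^{\otimes n})$ with $V$ the natural $2$--dimensional module, under which $\V_{n,p}$ corresponds to a Weyl module of $U_q(\mathfrak{sl}_2)$ at the root of unity $q$; the Weyl modules of $U_q(\mathfrak{sl}_2)$ are well known to be uniserial of length at most two with irreducible radical when reducible, and this transports back. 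Alternatively (b) can be proved diagrammatically and self-containedly by induction on $n$, using the branching rule for $\V_{n,p}\!\downarrow\!\mathsf{TL}_{n-1}$ together with the order of vanishing of $\det\Gr_{n,p,\delta}$ found in (a) to bound the composition length.

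The principal obstacle is exactly this package (a)+(b): an accurate evaluation of the Gram determinant and its zero locus, and a clean proof that reducible cell modules of $\Tl$ over $\mathbb{C}$ have length precisely two with irreducible radical --- after which the homomorphism of the theorem is simply the composite displayed above and the irreducibility of its kernel and cokernel is (b) applied to $\V_{n,p_2}$ and $\V_{n,p_1}$. If one prefers to avoid the abstract composite, the same homomorphism can be written down explicitly: it sends the generating $(n,p_2)$--link state to an alternating sum of $(n,p_1)$--link states whose coefficients are ratios of quantum integers, and checking that this respects the $\Tl$--action reduces to the vanishing of $[\,n-p_1-p_2+1\,]_q$ --- which is precisely the congruence hypothesis. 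This version makes the role of the hypothesis transparent, but it rests on the same Jones--Wenzl identity underlying the Gram determinant computation, so it does not circumvent the main difficulty.
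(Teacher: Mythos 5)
The paper itself gives no proof of this statement: it is quoted directly from Martin \cite[Section 7.3, Theorem 2]{M4} (as is Proposition \ref{thm1:radheadDim}, the companion dimension formula for $\RV_{n,p,\delta}$), so there is no in-paper argument to compare yours with; the only question is whether your outline stands on its own. Its formal skeleton is sound and is the standard one in the literature: granting (a) the exact vanishing locus of $\det \Gr_{n,p,\delta}$ and (b) that a reducible cell module has exactly two composition factors with $\RV_{n,p_1}\cong \Lcell_{n,p_2}$ for the unique partner $p_2$, the composite $\V_{n,p_2}\twoheadrightarrow \Lcell_{n,p_2}\cong \RV_{n,p_1}\hookrightarrow \V_{n,p_1}$ is the required $\theta$, its kernel and cokernel are $\RV_{n,p_2}$ and $\Lcell_{n,p_1}$, and the converse follows by comparing the simple head of $\operatorname{im}\theta$ with the composition factors of $\V_{n,p_1}$. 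This is precisely the form in which the present paper later uses the theorem (the isomorphisms $f_{u_i,i}:\Lcell_{u_i,i}\rightarrow\RV_{u_i,i}$ in the proof of Theorem \ref{thm5:Rad3}), so your reading of the statement is the right one.

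Be aware, though, that as written the proposal is a reduction rather than a proof: essentially all of the content lives in (a) and (b), and (b) in particular already contains the assertion that kernels and cokernels are irreducible, so the theorem is being derived from results at least as strong as itself. The routes you indicate for (a) and (b) (the product formula for the Gram determinant; quantum Schur--Weyl duality and the length-two structure of Weyl modules for $U_q(\mathfrak{sl}_2)$ at a root of unity; or an induction on $n$ via restriction) are all legitimate and standard, but none is carried out, and the Gram-determinant bookkeeping at roots of unity is exactly where errors usually creep in. Three smaller points to tidy if you write this up: the converse is only argued for $p_1>p_2$, and the cases $p_1<p_2$ and $p_1-p_2\geq l$ with the congruence holding should be dispatched explicitly (the same head-versus-composition-factor comparison does it, since $\Lcell_{n,p_2}$ would have to occur in $\V_{n,p_1}$, whose factors are $\Lcell_{n,p_1}$ and possibly $\Lcell_{n,p'}$ with $p'<p_1$ and $p_1-p'<l$); when the kernel (or the radical) vanishes, ``irreducible'' must be read as allowing the zero module; and the degenerate value $\delta=0$ (i.e.\ $q=\pm i$, $l=2$, which the surrounding paper does allow, e.g.\ $\breve{\delta}=(0,\sqrt{2})$) needs separate care, since for $n$ even the bilinear form on $\V_{n,n/2}$ is identically zero and the ``unique maximal submodule'' step is then not automatic.
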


 Define $ r_{(n,p)} $ be the integer satisfying the equation $ n-2 p +1 = k l + r_{(n,p)} $, where $ k \in \mathbb{N} $ and  $ r_{(n,p)} \in \{ 1, \dots , l \} $. The critically of $ (n,p) $ is equivalent to $ r_{(n,p)}=l $.

\begin{proposition} \cite[Section 7.3, Theorem 2]{M4}\textbf{.} \label{thm1:radheadDim}
Let $ q $ be a root of unity and $ (n,p) $ be non-critical. Then 
\begin{align}
\dim \RV_{n,p, \delta }  = \left\lbrace \begin{array}{ll}
\dim \Lcell_{n, \,  p+r_{(n,p)}-l, \, \delta}     & \text{if} \; p+r_{(n,p)} -l \geq 0, \\
0   & \text{otherwise}.
\end{array} \right. 
\end{align} 
\end{proposition}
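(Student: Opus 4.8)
Set $s:=r_{(n,p)}$ and $p_2:=p+s-l$; non-criticality of $(n,p)$ means $s\in\{1,\dots,l-1\}$, so $p-l<p_2<p$. As $\delta\neq0$, the head $\V_{n,p}/\RV_{n,p,\delta}=\Lcell_{n,p,\delta}$ is simple, so $\RV_{n,p,\delta}$ is the \emph{unique} maximal submodule of $\V_{n,p}$ and $\dim\RV_{n,p,\delta}=\mathsf{d}_{n,p}-\dim\Lcell_{n,p,\delta}$. Hence the proposition is equivalent to the claim that $\V_{n,p}$ is irreducible when $p_2<0$, and is uniserial of length $2$ with socle isomorphic to $\Lcell_{n,p_2,\delta}$ when $p_2\ge0$. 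The plan is to prove this by induction on $p$, following the chain of indices linked to $p$ by Theorem~\ref{thm1:cellhom2}.

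First I would locate the homomorphisms into $\V_{n,p}$. For $p'<p$, substituting $n-2p+1=kl+s$ shows that $n-p-p'+1\equiv0\pmod l$ together with $0\le p-p'<l$ forces $p-p'=l-s$, i.e. $p'=p_2$; so by Theorem~\ref{thm1:cellhom2} this is the only index $p'\neq p$ admitting a non-trivial homomorphism $\V_{n,p'}\to\V_{n,p}$, and such a map $\theta\colon\V_{n,p_2}\to\V_{n,p}$ exists exactly when $p_2\ge0$, with $\operatorname{coker}\theta$ simple and $\ker\theta$ simple or zero. The same computation gives $r_{(n,p_2)}=l-s$, identifies the right-hand neighbour of $p$ as $p_1:=p+s$ (which lies in $\{0,\dots,[^n/_2]\}$ provided $n-2p+1>l$), and shows $p_1-p_2=l$, so that $\Hom(\V_{n,p_2},\V_{n,p_1})=0$; moreover every index of this chain is non-critical.

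Then I would treat the two cases. If $p_2<0$: no cell module other than $\V_{n,p}$ maps non-trivially into $\V_{n,p}$, and all indices of the chain below $p$ are negative, so by the known block decomposition of $\TL$ the only composition factor of $\V_{n,p}$ is $\Lcell_{n,p,\delta}$, whence $\V_{n,p}$ is irreducible and $\RV_{n,p,\delta}=0$. If $p_2\ge0$: since $\operatorname{coker}\theta$ is simple, $\im\theta$ is maximal, so $\im\theta=\RV_{n,p,\delta}\neq0$, and it remains to prove $\ker\theta=\RV_{n,p_2,\delta}$ --- this yields $\RV_{n,p,\delta}=\im\theta\cong\V_{n,p_2}/\RV_{n,p_2,\delta}=\Lcell_{n,p_2,\delta}$, as required. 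When $p_1\in\{0,\dots,[^n/_2]\}$ this is quick: taking $\theta'\colon\V_{n,p}\to\V_{n,p_1}$ from Theorem~\ref{thm1:cellhom2}, the vanishing $\Hom(\V_{n,p_2},\V_{n,p_1})=0$ gives $\theta'\theta=0$, so $0\neq\im\theta\subseteq\ker\theta'$; as $\ker\theta'$ is non-zero it is simple by Theorem~\ref{thm1:cellhom2}, hence $\ker\theta'=\im\theta=\RV_{n,p,\delta}$ is simple, $\V_{n,p_2}/\ker\theta$ is simple, $\ker\theta$ is maximal in $\V_{n,p_2}$, and therefore $\ker\theta=\RV_{n,p_2,\delta}$. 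Then $\V_{n,p}$ is uniserial of length $2$, which is what the induction needs.

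The hard part will be the remaining case, the cell modules near the top of the tower for which $n-2p+1<l$: then $p_1=p+s$ lies outside $\{0,\dots,[^n/_2]\}$, the map $\theta'$ is no longer available (e.g. for $\V_{n,\,[^n/_2]}$), and one must show by other means that $\theta$ is not injective when $\V_{n,p_2}$ is reducible --- equivalently that $\V_{n,p}$ still has Loewy length at most $2$. I would obtain this either from the known product formula for $\det\Gr_{n,p,\delta}$ at a root of unity, which determines $\rank\Gr_{n,p,\delta}=\dim\Lcell_{n,p,\delta}$ directly (and in fact bypasses the whole induction), or by transporting the statement to $\mathsf{TL}_{n+1}(\delta)$ via the restriction and induction functors and using the already-settled cases there. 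The modular arithmetic above, the block decomposition of $\TL$, and the restriction to the case $\delta\neq0$ are routine points I would only record.
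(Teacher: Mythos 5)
This proposition is not proved in the paper at all: it is quoted verbatim from \cite{M4} (Section 7.3, Theorem 2), so there is no internal argument to compare yours with, and your sketch has to be judged on its own. Where it is complete, it is correct and is essentially the standard argument: the congruence computation showing that $p_2=p+r_{(n,p)}-l$ is the only index below $p$ admitting a non-trivial map into $\V_{n,p}$, the identification $\im\theta=\RV_{n,p,\delta}$ from simplicity of the cokernel plus simplicity of the head, and the composition trick $\theta'\theta\in\Hom(\V_{n,p_2},\V_{n,p_1})=0$ with $p_1=p+r_{(n,p)}$, which forces $\ker\theta=\RV_{n,p_2,\delta}$, all work --- but only when $p_1\leq[^n/_2]$, i.e.\ when $n-2p+1>l$.

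The boundary case $n-2p+1<l$ with $p+r_{(n,p)}-l\geq0$ is therefore a genuine gap, and neither of your two proposed repairs closes it as stated. The first is overstated: the product formula for $\det \Gr_{n,p,\delta}$ only tells you \emph{when} the form degenerates; a vanishing determinant does not determine $\rank\Gr_{n,p,\delta}$, and the known rank statements at roots of unity are essentially the proposition itself, so invoking them is circular. The second (restriction/induction along the tower) is indeed the route taken in the literature (\cite{M4}, \cite{RS1}), but it is precisely where the substantive work lies and you do not carry it out. Concretely, take $l=5$ and $(n,p)=(10,5)$: here $p_2=1$, $\theta\colon\V_{10,1}\to\V_{10,5}$ maps a $9$-dimensional module into a $42$-dimensional one, $\RV_{10,1,\delta}\neq0$, and you must show $\theta$ is \emph{not} injective; no dimension count, no hom-vanishing of the kind you use, and no appeal to the determinant alone will do this. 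Two smaller caveats: your $p_2<0$ case leans on the block decomposition of $\TL$, an input of comparable depth to the statement and not among the results quoted in this paper; and the case $\delta=0$ (i.e.\ $q=\pm i$, $l=2$), which the proposition does cover, is not routine for your method, since the head of $\V_{n,\,n/2}$ is then zero and the ``radical is the unique maximal submodule'' step fails for that module.
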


\section{The bubble algebra $ \Tm $}

 \hspace{15pt}
Throughout the paper, let $ n,m $ be positive integers, $ \Cc_0 ,   \dots , \Cc_{m-1} $ be different colours where none of them is white, and  $ \delta_0,  \dots  ,  \delta_{m-1}  $ be scalars corresponding to these colours.

The aim of this section is introducing the multi-colour partition algebra and then defining the bubble algebra as a sub-algebra.

Define the set $ \Phi^{n,m} $ to be 
\[ \{ (A_0, \dots , A_{m-1}) \mid \{ A_0, \dots, A_{m-1} \} \in \mathcal{P}_n \}.\]

We construct basis elements of  the multi-colour partition algebra  in similar way of the algebra $ \Pp $. Let $ (A_0, \dots , A_{m-1}) \in \Phi^{n,m} $ (note that some of these subsets can be an empty set). Define $  \mathcal{P}_{A_0 , \ldots ,  A_{m-1}} $ to be the set $ \prod_{i=0}^{m-1} \mathcal{P}_{A_i}  $, where $ \mathcal{P}_{A_i}  $ is the set of all partitions of $ A_i$, and 
\begin{align*}
\ppm :=&  \bigcup\limits_{ (A_0, \dots , A_{m-1}) \in \Phi^{n,m} }  \mathcal{P}_{A_0 ,  \ldots ,  A_{m-1}} .
\end{align*}

The element $ d= (d_0, \dots , d_{m-1} ) \in \prod_{i=0}^{m-1} \mathcal{P}_{A_i} $ can be represented by the same diagram of the partition $ \cup_{i=0}^{m-1} d_i \in \mathcal{P}_{n} $ after colouring it as follows: we use the colour $ \Cc_{i} $ to draw all the edges and the nodes in the partition $ d_i $.

A diagram represents an element in $ \ppm $ is not unique.  We say two diagrams are equivalent if they represent the same tuple of partitions. The term multi-colour partition diagram will be used to mean an equivalence class of a given diagram. For example, the following diagrams
\begin{center}
\begin{tikzpicture}
          \draw[gray] (0,0) rectangle (1.25,0.75);
           \fill[red] (0.25,0)  circle[radius=1.5pt];
            \fill[red] (0.25,0.75)  circle[radius=1.5pt];
            \fill[blue] (0.5,0)  circle[radius=1.5pt];
            \fill[blue] (0.5, 0.75)  circle[radius=1.5pt];
            \fill[blue] (0.75,0)  circle[radius=1.5pt];
            \fill[red] (0.75, 0.75)  circle[radius=1.5pt];
            \fill[red] (1, 0.75)  circle[radius=1.5pt];
            \fill[blue] (1,0)  circle[radius=1.5pt];
            \draw[red] (0.25,0)--(0.25, 0.75 );
            \draw[blue] (0.5, 0.75) .. controls ( 0.6,0.3) and (0.95,0.3) .. (1, 0);
            \draw [domain=180:360][red] plot ({0.25*cos(\x)+0.5}, {0.2*sin(\x)+0.75});
            \draw[ domain=0:180][blue] plot ({0.125*cos(\x)+0.625}, {0.15*sin(\x)+0});
            \draw[ domain=0:180][blue] plot ({0.125*cos(\x)+0.875}, {0.15*sin(\x)+0});
\end{tikzpicture} 
\;\;\;\;\;\;\;
\begin{tikzpicture}
          \draw[gray] (0,0) rectangle (1.25,0.75);
           \fill[red] (0.25,0)  circle[radius=1.5pt];
            \fill[red] (0.25,0.75)  circle[radius=1.5pt];
            \fill[blue] (0.5,0)  circle[radius=1.5pt];
            \fill[blue] (0.5, 0.75)  circle[radius=1.5pt];
            \fill[blue] (0.75,0)  circle[radius=1.5pt];
            \fill[red] (0.75, 0.75)  circle[radius=1.5pt];
            \fill[red] (1, 0.75)  circle[radius=1.5pt];
            \fill[blue] (1,0)  circle[radius=1.5pt];
            \draw[red] (0.25,0)--(0.27, 0.75 );
            \draw[blue] (0.5, 0.75)--(0.5, 0);
            \draw [domain=180:360][red] plot ({0.25*cos(\x)+0.5}, {0.2*sin(\x)+0.75});
            \draw[ domain=0:180][blue] plot ({0.25*cos(\x)+0.75}, {0.2*sin(\x)+0});
            \draw[ domain=0:180][blue] plot ({0.125*cos(\x)+0.875}, {0.1*sin(\x)+0});
\end{tikzpicture} 
\end{center}
are equivalent.

 We define the following sets for each element $ d \in \prod \mathcal{P}_{A_i} $:
\begin{align*} 
 \Top(d_i)= A_i \cap\underline{n} , \;\;\;\;\;\;\;\;\;\;\;\;\;\;\;\;\;\;\;\;\;\;\;\;  \Bot(d_i) =  A_i \cap\underline{n}' , \;\;\;\;\;\;\;\;\;\;\;\;\;\;\;\;\;\;\;\;\;\;\;\; \\
 \Top(d)= ( \Top(d_0) ,\dots ,\Top(d_{m-1}) ), \;\;\;\;\;\;\;\;  \Bot(d) =( \Bot(d_0) ,\dots ,\Bot(d_{m-1}) )  .
\end{align*} 

\begin{definition} 
Let $ \Pm $ be $\F$-vector space with the basis $ \ppm $ and with the composition:
\begin{align*} 
( \alpha_i)( \beta_i) = \left\{ 
   \begin{array}{l l}
    \prod\limits_{i=0}^{m-1} \delta_i ^{c_i}  ( \beta_j \circ \alpha_j ) & \quad \text{if } \;  \Bot( \alpha)= \Top( \beta) ,\\
     0 & \quad \text{otherwise.}
   \end{array} \right. 
\end{align*}
where $\delta_i \in \F$, $ \alpha, \beta \in \ppm $, $c_{i}$ is the number of removed connected components from the middle row when computing the product $ \beta_i  \circ \alpha_i $ for each $ i=0, \dots , m-1 $ and $ \circ $ is the normal composition of partition diagrams.
\end{definition}

\begin{proposition}
The product on $ \Pm $ that defined in the previous definition, is associative.
\end{proposition}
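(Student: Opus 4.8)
The plan is to reduce associativity of the multi-colour product to the already-known associativity of the ordinary partition-diagram product, treating each colour-component independently and carefully tracking the scalar factors $\prod_i \delta_i^{c_i}$. First I would fix three basis elements $\alpha=(\alpha_i)$, $\beta=(\beta_i)$, $\gamma=(\gamma_i)$ of $\Pm$ and compare $(\alpha\beta)\gamma$ with $\alpha(\beta\gamma)$, observing that the product is defined purely componentwise in the colours: the $i$-th component of $\alpha\beta$ (when non-zero) is $\beta_i\circ\alpha_i$, and the matching condition $\Bot(\alpha)=\Top(\beta)$ is the conjunction over $i$ of $\Bot(\alpha_i)=\Top(\beta_i)$. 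Hence the vanishing/non-vanishing behaviour is symmetric: $(\alpha\beta)\gamma\neq 0$ iff $\Bot(\alpha)=\Top(\beta)$ and $\Bot(\beta)=\Top(\gamma)$, and the same two conditions govern $\alpha(\beta\gamma)$; so the two triple products vanish simultaneously, and it remains to treat the case where both are non-zero.

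In the non-zero case I would argue colour by colour. For each fixed $i$, the diagram part is $(\gamma_i\circ\beta_i)\circ\alpha_i$ versus $\gamma_i\circ(\beta_i\circ\alpha_i)$, which agree by associativity of $\circ$ on $\mathcal{P}_n$ (stated earlier in Section~\ref{sec1}). The real content is that the accumulated power of $\delta_i$ is the same on both sides. Writing $c_i(\alpha_i,\beta_i)$ for the number of interior loops removed when forming $\beta_i\circ\alpha_i$, I must check
\[
c_i(\alpha_i,\beta_i) + c_i(\gamma_i\circ\beta_i\circ\alpha_i\text{-stacking, step 2}) = c_i(\beta_i,\gamma_i) + c_i(\text{step 2 on the other side}),
\]
i.e.\ that the total number of bounded (boundary-isolated) components produced when stacking the three diagrams $\alpha_i$ above $\beta_i$ above $\gamma_i$ and then reducing is independent of the order of reduction. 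This is the standard fact underlying associativity of the partition algebra itself: stacking all three diagrams first and then deleting \emph{all} interior components at once gives a well-defined diagram together with a well-defined loop count, and either bracketing computes exactly this count, because a component that becomes interior after the first composition stays interior, and no new interior components are created or destroyed by the order of collapsing. I would phrase this as: the multiset of connected components of the fully-stacked three-row colour-$i$ diagram that touch neither the top nor the bottom boundary has cardinality $c_i(\alpha_i,\beta_i)+c_i(\beta_i\circ\alpha_i,\gamma_i)=c_i(\beta_i,\gamma_i)+c_i(\alpha_i,\gamma_i\circ\beta_i)$, independent of bracketing.

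Putting the colours back together, both $(\alpha\beta)\gamma$ and $\alpha(\beta\gamma)$ equal $\prod_{i=0}^{m-1}\delta_i^{\,e_i}\,(\gamma_i\circ\beta_i\circ\alpha_i)_i$ with the same exponents $e_i$ and the same diagram, so they coincide; bilinearity then extends this from basis elements to all of $\Pm$. The main obstacle is precisely the loop-counting bookkeeping in the previous paragraph — making rigorous that ``interior component of a composite of composites $=$ interior component of the triple stack'' so the colour-$i$ exponents match. Everything else (the componentwise/colourwise structure, the symmetry of the matching conditions, reduction to associativity of $\circ$ on $\mathcal{P}_n$) is routine. I would likely dispatch the loop count by invoking the corresponding statement for the ordinary partition algebra $\Pp$ applied separately in each colour, since the colour-$i$ data of our diagrams is literally an element of $\mathcal{P}_n$ and the scalar attached is literally the partition-algebra scalar $\delta_i^{c_i}$; associativity in $\Pm$ is then $m$ independent copies of associativity in the partition algebra, glued by the product-of-colours structure of $\Phi^{n,m}$.
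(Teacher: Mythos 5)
Your proposal is correct and follows essentially the same route as the paper: check that the componentwise matching conditions make both bracketings vanish simultaneously, then reduce the non-vanishing case colour by colour to the known associativity of partition-diagram composition (i.e.\ of the ordinary partition algebra with parameter $\delta_i$). You are in fact more explicit than the paper about the loop-count bookkeeping for the scalars $\delta_i^{c_i}$, which the paper subsumes into its appeal to associativity of $\circ$ on $\mathcal{P}_n$.
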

\begin{proof}
Let $ \alpha= ( \alpha_i) ,$  $ \beta= ( \beta_i) $ and $ \rho = ( \rho_i ) $ be multi-colour partitions in $ \ppm $. 
Note that $ \Top( \alpha_i \circ \beta_i )= \Top( \beta_i )$ and $ \Bot( \alpha_i  \circ \beta_i )= \Bot( \alpha_i )$ as long as $ \alpha \circ \beta $ is defined. From the multiplication on $ \PM $, the composition $ \alpha \circ ( \beta  \circ \rho  ) $ is defined if and only if $ \Top( \alpha_i )= \Bot( \beta_i \circ \rho_i )$ for each $i$, and $ \beta  \circ \rho $ is defined if and only if $ \Top( \beta_i )= \Bot( \rho_i )$ for each $i$. But if $ \beta  \circ \rho $ is defined then $\Bot( \beta_i \circ \rho_i )= \Bot( \beta_i)$ for each $i$. Then $ \alpha \circ ( \beta  \circ \rho  ) $ is defined if and only if $ \Top( \alpha_i )= \Bot( \beta_i)$ and $ \Top( \beta_i )= \Bot( \rho_i )$ for each $ i $. Similarly, $( \alpha \circ  \beta)  \circ \rho  $ is defined if and only if $ \Top( \alpha_i )= \Bot( \beta_i)$ and $ \Top( \beta_i )= \Bot( \rho_i )$ for each $i$. So the composition $ \alpha \circ ( \beta  \circ \rho  ) $ is defined if and only if $( \alpha  \circ \beta ) \circ \rho $ is defined, then the product in $ \Pm$ is an associative when vanishes. 
Furthermore, if it does not vanish, we have
\begin{align*}
 (  \rho \beta  ) \alpha =  & \big(  \alpha_0 \circ ( \beta_0 \circ \rho_0 ), \dots , \alpha_{m-1} \circ ( \beta_{m-1} \circ \rho_{m-1} ) \big) \\
= & \big( ( \alpha_0 \circ \beta_0 ) \circ \rho_0 , \dots,  ( \alpha_{m-1} \circ  \beta_{m-1} ) \circ \rho_{m-1} \big) = \rho ( \beta \alpha ) ,
\end{align*}
as the composition of partition diagrams is associative.
\end{proof}

From the previous proposition, we have $ \Pm$ is an associative algebra with identity:
\[ 1_{ \PM } = \sum\limits_{(A_0, \dots , A_{m-1}) \in \Xi^{n,m} } ( 1_{A_0} ,  \ldots ,  1_{A_{m-1}} ) , \]
where $ \Xi^{n,m} := \{ (A_0 , \ldots ,  A_{m-1}) \mid \cup_{i=0}^{m-1} A_i= \underline{n}, A_i \cap A_j = \emptyset \; \forall i \neq j \} $, $1_{ A_i }$ is the partition of the set $ A_i \cup A'_i $ where any node $ j $ is only connected with the node $ j' $  for all $ j \in A_i$ and $ A_i'=\{ j' \mid j \in A_i \} $, for all $ 0 \leq i \leq m-1 $. This means the identity is the summation of all the different multi-colour partitions that their diagrams connect $i$ only to $i'$ with any colour for each $1 \leq i \leq n $. The algebra $ \Pm $ is called the multi-colour partition algebra.

\begin{definition} \cite[Section 2]{GM2}\textbf{.} 
The propagating number of $ \alpha \in \ppm $, $ \# (\alpha) $, is the number of parts which contain nodes from both the top and the bottom rows in any colour, i.e. $\#( \alpha )=\sum_{i=0}^{m-1} \#( \alpha_i)  $ or simply $\# (\alpha)= \# \big( \cup_{i=0}^{m-1} \alpha_i  \big) $.
\end{definition}
\begin{definition} \cite[Section 2]{GM2}\textbf{.}
The $ \Cc_j $- propagating number of $ \alpha \in \ppm $, $ \#_j(\alpha) $, is the propagating number of $ \alpha_{j} $.
\end{definition}

The propagating number of diagrams in the algebra $ \PM $ has similar property of propagating number of diagrams in $ \Pp $: if $ \alpha, \beta\in \ppm $ with $ \alpha \beta \neq 0$, then
\begin{align*}
  \# (\alpha \beta)  \leq \min( \#(\alpha), \#(\beta)) ,  \;\;\;\;\;\;\;  \#_j( \alpha \beta)  \leq \min( \#_j(\alpha), \#_j(\beta)).  
\end{align*}

A planar multi-colour partition in the set $ \ppm $ is a multi-colour partition whose a diagram that does not have edge crossings in the same colour. This is the same definition that Grimm and Martin use in \cite{GM2}. In other words, there can be crossed edges but they don't have the same colour.  This definition of planar diagram is consistent with the definition of planar diagram in the algebra $ \Pp$ provided that considering all the diagrams in $ \Pp$ have been coloured by using only one colour.

We define subsets of $\ppm $ corresponding to those subsets of $\mathcal{P}_n$, as following:
\begin{align} \label{eq2:ColourSets}
  \left.\begin{array}{r@{\mskip\thickmuskip}l}
 \Sm &= \{ d\in \ppm \mid \# (d)=n \}, \\
\mathcal{A}_{n,m} &= \{ d\in \ppm \mid  d \text{ is planar} \},\\
\mathcal{B}_{n,m} & =\{ d\in \ppm \mid \, \text{all blocks of }  d  \text{ have size } 2 \},  \\
 \mathcal{T}_{n,m} & =\mathcal{A}_{n,m} \cap \mathcal{B}_{n,m}  ,\\
 \widehat{\Sym}_{n,m}&= \Sm \cap \mathcal{A}_{n,m}.
  \end{array} \;\;\; \right\}
\end{align}

The diagrams in the bubble algebra, as Grimm and Martin\cite{GM2} defined them, in the case of two colours can be constructed by drawing two Kauffman diagrams (or just one) with no internal loops, using different colours in the same frame with $n$ nodes on the northern face and $n$ nodes on the southern face, such that if a node is contained in first Kauffman diagram, it will not be contained in the second. This means that at these diagrams the nodes are connected in pairs with different colours where an intersection is just allowed between different colour edges.

The bubble algebra $\Tm $ (it is denoted by $ T_{n}^2 (\delta _r,\delta _b) $ in \cite{GM2} in the case of two colours), or simply $\TM $  and $ \TM ( \breve{ \delta } )$ for simplicity where $ \breve{\delta}= ( \delta_0 , \dots , \delta_{m-1}) $, is the $\F$-linear extension of the set of isotopy classes of previous diagrams and composition defined as the one on $ \Pm $, with internal closed loop replacement. The loop replacement scalar  here depends on the colour.

\begin{theorem}
The bubble algebra $ \Tm $ is the sub-algebra of the algebra $ \Pm$ spanned by the set $\mathcal{T}_{n,m} $, which is defined in equation \eqref{eq2:ColourSets}.
\end{theorem}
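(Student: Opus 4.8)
The plan is to prove the statement in two stages: first identify the diagram sets underlying the two algebras, and then verify that the linear span of $\mathcal{T}_{n,m}$ inside $\Pm$ is closed under the multiplication of $\Pm$ and that the induced product is the one used to define $\Tm$.

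For the first stage I would simply unwind the definitions. An element $d=(d_0,\dots,d_{m-1})$ of $\mathcal{T}_{n,m}=\mathcal{A}_{n,m}\cap\mathcal{B}_{n,m}$ has every block of every $d_i$ of size $2$ and no two same-coloured edges crossing; moreover the sets $A_i=\Top(d_i)\cup\Bot(d_i)$ form a set partition of $\underline{n}\cup\underline{n}'$, so each of the $2n$ boundary nodes receives exactly one colour. Drawing each $d_i$ in the colour $\Cc_i$ therefore produces exactly a Kauffman (Temperley-Lieb) diagram in colour $\Cc_i$ on the node set $A_i$: there are no internal loops (every node lies on the boundary), the colour regions are disjoint, and a crossing can occur only between edges of different colours. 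This is precisely the description of a bubble diagram recalled above, and conversely any bubble diagram, read off colour by colour, yields an element of $\mathcal{A}_{n,m}\cap\mathcal{B}_{n,m}$. Since two bubble diagrams are isotopic exactly when they determine the same tuple of coloured pairings, i.e. the same element of $\ppm$, isotopy classes of bubble diagrams are in bijection with $\mathcal{T}_{n,m}$, hence with a subset of the distinguished basis $\ppm$ of $\Pm$.

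For the second stage, take $\alpha=(\alpha_i)$ and $\beta=(\beta_i)$ in $\mathcal{T}_{n,m}$ with $\alpha\beta\neq 0$; then $\Bot(\alpha_i)=\Top(\beta_i)$ for all $i$ and $\alpha\beta=\prod_i\delta_i^{c_i}(\beta_i\circ\alpha_i)_i$, with $c_i$ the number of middle components removed in colour $i$. The point I need is that $(\beta_i\circ\alpha_i)_i$ again lies in $\mathcal{T}_{n,m}$. Stacking $\alpha_i$ over $\beta_i$ and contracting the middle row, every middle node acquires degree $2$ while every surviving boundary node has degree $1$, so the connected components are either closed loops sitting in the middle — the $c_i$ removed ones — or simple arcs joining two boundary nodes; hence every block of $\beta_i\circ\alpha_i$ has size $2$, and the composite is planar because gluing and contracting non-crossing diagrams cannot create a crossing (this is the classical composition rule for Temperley-Lieb/Kauffman diagrams). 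Finally, colouring each top node of the composite as in $\alpha$ and each bottom node as in $\beta$ is consistent, since $\{\Top(\alpha_i)\}_i$ partitions $\underline{n}$ and $\{\Bot(\beta_i)\}_i$ partitions $\underline{n}'$; so $(\beta_i\circ\alpha_i)_i$ is a genuine element of $\ppm$, and by the above it lies in $\mathcal{T}_{n,m}$. Together with the fact that $1_{\PM}$ is a sum of the coloured identity diagrams $(1_{A_0},\dots,1_{A_{m-1}})$, each of which is in $\mathcal{T}_{n,m}$, this shows that the span of $\mathcal{T}_{n,m}$ is a unital subalgebra of $\Pm$.

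To finish, I would observe that on this span the product inherited from $\Pm$ is exactly: stack two bubble diagrams, delete each closed middle loop, and weight by $\delta_i$ for a deleted loop of colour $\Cc_i$ — which is verbatim the composition defining $\Tm$ with its colour-dependent loop replacement — while associativity is inherited from the associativity of $\Pm$ established earlier. Hence the span of $\mathcal{T}_{n,m}$, with diagram basis and multiplication, is identical to $\Tm$, which is the assertion. The only step that is not purely formal is the closure claim in the second stage, and I expect that to be the main obstacle; but it reduces at once to the classical fact that planar pair-partition diagrams in a fixed colour compose to planar pair-partition diagrams, the extra multi-colour bookkeeping (tracking which $A_i$ the nodes of the composite belong to) being routine once one uses $\Bot(\alpha_i)=\Top(\beta_i)$.
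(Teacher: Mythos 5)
Your proposal is correct and follows essentially the same route as the paper's proof: identify isotopy classes of bubble diagrams with elements of $\mathcal{T}_{n,m}$, verify closure under the product of $\Pm$ by observing that each colour component composes as a Kauffman/Temperley--Lieb diagram, and note that the identity of $\Pm$ is a sum of elements $1_{(A_0,\dots,A_{m-1})}$ lying in $\mathcal{T}_{n,m}$. You simply spell out in more detail (the degree-two argument for blocks of size $2$, preservation of planarity, and the matching of the colour-dependent loop parameters) what the paper states tersely.
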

\begin{proof}
 From the description of diagrams in the bubble algebra $ \Tm $, we can identify bubble diagrams with multi-colour partitions and hence define it as sub-algebra of the algebra $ \Pm$. We are going to show that $ \tm $ is closed under the composition on $ \PM $ and then the rest follows immediately from the algebra $ \PM $ and from bubble diagrams realisation and from the fact $ 1_{(A_0, \dots , A_{m-1})} \in \tm $ for each $ (A_0, \dots , A_{m-1}) \in \Xi^{n,m}  $. Let $ D=(D_0, \dots, D_{m-1}) $ and $ B=(B_0, \dots , B_{m-1}) $ be multi-colour partitions in $ \tm $ such that $ BD \neq 0 $, so we have $ D_i \circ B_i $ is defined as partition diagrams. Now from the definition of $ \tm $, all the partitions $ D_i $ and $ B_i $ are representing by Kauffman diagrams, but then $ D_i \circ B_i $ is also representing by Kauffman's diagram for each $ i $, and we are done.
\end{proof}

\section{Cell modules}

 \hspace{15pt}
Making an arc, an edge connects two nodes in the same row (top or bottom) of a diagram,  needs two vertices on this row, so the propagating number of any diagram $ d \in \tm $ has the form $ \# (d) = n-2v $ for some integer $ v $, where $ 0 \leq v \leq [ ^n/_2 ] $.

 Define the set $\Gamma_{(l,m)} $ to be 
 \[ \{ \lambda=(\lambda_0, \dots , \lambda_{m-1} ) \mid  \lambda_i \in \mathbb{N} \cup \{ 0 \} \; \text{for each} \; i \; \text{and} \;   \sum_{i=0}^{m-1} \lambda_i = l  \} \]
  and the set 
 \[ \Lambda := \bigcup_{ v=0}^{ [^n/_2] } \Gamma _{(n-2v,m)} . \]

We follow Grimm and Martin \cite{GM2} and define the subset $ \tm [ \lambda_0, \ldots , \lambda_{m-1}]$, or simply $ \tm [ \lambda ]$, to be $ \{ d \in \tm | \# _{j}(d)= \lambda_{j} \; \text{ for all} \;  j \in \mathbb{Z}_m \} $, where $ \lambda \in \Lambda $.

A half-multi-colour diagram, or simply half-diagram, is a diagram obtained by cutting horizontally a diagram in the set $ \tm $ in the middle such that each propagating line is cut once, thus this is well defined on classes. As for the Temperley-Lieb algebra, we can form a  unique bubble algebra diagram from two half-diagrams providing that they have the same number of propagating lines of each colour.  Let $ \tm^{| \rangle } [ \lambda ]$ be the set of top pieces obtained by cutting elements of the set $ \tm [ \lambda ]$, where $ \lambda \in \Lambda $. Similarly $ \tm^{\langle |} [ \lambda ]$ is the set of bottom pieces obtained by cutting elements of $\tm [ \lambda ]$.

 A half-diagram is called a $ ((n_0,p_0), \ldots , (n_{m-1} , p_{m-1})) $-link state, if it contains both $n_j$ nodes and  $p_j$ arcs of the colour $ \Cc_j $ for each $ j $. This means that there are $ n_j-2p_j $ unconnected nodes of the colour $\Cc_{j}$ for each $j$.

 Denote by $ \F \M_n ( \lambda_0, \dots , \lambda_{m-1} ) $, or simply $ \F \M_n ( \lambda ) $ where $ \lambda \in \Lambda $, the vector space with a basis $ \M_n ( \lambda) $ which contains all link states that have number of defects of the colour $ \Cc_j $ on the form $ \lambda_j -2t_j $ for each $ j \in \Z_m $ where $ 0 \leq t_j \leq [^{\lambda_j} /_2 ] $. Note that there is no condition on the colours of arcs.

\begin{lemma} \label{lem5:1}
Let $ \lambda \in \Lambda$. The vector space $ \F \M_n (\lambda ) $ is a left $ \TM $- module with the action defined  by  the concatenation of diagram with a half-diagram then proceeding as we would with two diagrams in $ \TM$ (remove each loop and replace it by parameter corresponding to the loop's colour and it will be zero if they have different distribution of colours), and finally omit any new bottom arcs.  
\end{lemma}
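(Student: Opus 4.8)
The plan is to verify that the concatenation action is well-defined on link states, that it respects the module axioms, and that the colour-degree constraints defining $\F\M_n(\lambda)$ are preserved (so that the result of the action still lies in $\F\M_n(\lambda)$, with the understanding that half-diagrams whose defect-colour distribution changes are sent to $0$). First I would set up notation: given $D = (D_0,\dots,D_{m-1}) \in \tm$ and a link state $S = (S_0,\dots,S_{m-1}) \in \M_n(\lambda)$, define $D \cdot S$ by stacking $D$ above $S$, identifying the bottom row of $D$ with the top row of $S$, performing the colourwise loop removal (each closed loop of colour $\Cc_i$ contributing a factor $\delta_i$, and the whole product being $0$ if at any point two edges of different colours are forced to join), and then erasing any arcs that have appeared on the new bottom row. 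I would note that each $D_i \circ S_i$ is exactly the Temperley–Lieb-type action of the Kauffman diagram $D_i$ on the half-diagram $S_i$ as recalled earlier in the excerpt for the modules $\M_{n,p}$, so the building blocks are already understood; what is new here is only the bookkeeping of the several colours simultaneously.

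Next I would check well-definedness on isotopy/equivalence classes: since both $\tm$ and the set of link states were already defined as classes of diagrams, and the TL action on each colour component is known to be well-defined on classes, the product $D \cdot S$ depends only on the classes of $D$ and $S$; this is immediate componentwise. Then I would verify that $D \cdot S$ is again a (scalar multiple of a) link state in $\M_n(\lambda)$ or is $0$. The key point is the colour-degree count: in $D_i \circ S_i$ the number of through-lines cannot increase, and since $\sum_i \#_i(D) = \#(D) \le n$ with each $\#_i(D) = \lambda_i$-bounded data, either the colour distribution of defects of $D \cdot S$ still has the form $\lambda_j - 2t_j$ for admissible $t_j$ — in which case $D\cdot S$ lies in $\F\M_n(\lambda)$ — or colours are forced to merge and the product is $0$ by definition. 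I would phrase this using the propagating-number inequality $\#_j(\alpha\beta) \le \min(\#_j(\alpha),\#_j(\beta))$ already recorded in the excerpt, applied after viewing the half-diagram as half of an honest bubble diagram.

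Finally I would check the module axioms. Compatibility with the identity $1_{\TM}$ is clear: $1_{\TM}$ is the sum of all $(1_{A_0},\dots,1_{A_{m-1}})$ over colourings of $\underline{n}$, and exactly one summand has bottom-row colouring matching the top-row colouring of $S$, acting on $S$ as the identity, while all other summands kill $S$ by the $\Bot(\alpha)=\Top(\beta)$ condition; hence $1_{\TM}\cdot S = S$. For associativity of the action, $D \cdot (D' \cdot S) = (D D')\cdot S$, I would reduce to the corresponding statement for the ordinary composition of partition diagrams together with loop counting: stacking $D$ above $D'$ above $S$ and then removing loops and bottom arcs gives the same class and the same scalar whether one first contracts the $D'$–$S$ interface or the $D$–$D'$ interface, because loops that become internal are counted with the same colour either way, and the "different colours forced together implies $0$" clause is symmetric. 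This is essentially the same argument as the associativity proof for the product on $\PM$ given above, extended to allow the bottom object to be a half-diagram. I expect the main obstacle to be purely notational: carefully justifying that erasing the new bottom arcs commutes with the stacking order and does not interfere with loop counting — i.e., that no loop which ought to contribute a scalar gets mistakenly erased as a bottom arc, and conversely. Once that compatibility is pinned down, linearity extends the action from basis link states to all of $\F\M_n(\lambda)$ and the lemma follows.
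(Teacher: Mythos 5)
Your proposal is correct and its essential content coincides with the paper's proof: the key point in both is that concatenating a diagram of $\tm$ onto a link state cannot increase the number of propagating lines of any colour, and forming a new $\Cc_j$-arc consumes two $\Cc_j$-defects, so the defect count of each colour stays of the form $\lambda_j-2t_j$ and the result remains in $\F \M_n(\lambda)$ (or is $0$). The paper's proof verifies only this closure property and leaves the well-definedness, identity, and associativity checks implicit, whereas you spell them out; this is extra routine detail rather than a different approach.
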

\begin{proof}
Let $ x \in \tm $ and $ d $ be a half-diagram in $ \M_n ( \lambda ) $. Without loss of generality, we can assume $ xd \neq 0 $, multiplying $ x $ with $ d $ can not create any additional propagating lines of any colour. Thus the number of $ \Cc_j $-defects in $  x d $ is of the form $ \lambda_j -2t_j $ where $ 0 \leq t_j \leq [^{ \lambda_j} /_2 ] $, because making an extra $ \Cc_j $-arc needs two $ \Cc_j $-nodes.
\end{proof}

  Define a subset  $ \M_n^{ ^<} ( \lambda ) $ to be $ \bigcup_{j=0}^{m-1}   \M_n ( \lambda_0, \dots , \lambda_j-2, \dots , \lambda_{m-1} ) $. Note that $ \M_n ( \lambda_0, \dots , $  $\lambda_j-2, \dots , \lambda_{m-1} ) $ is taken to be the empty-set when $ \lambda_j < 2 $. Let  $ \F \M_n^{^<} ( \lambda ) $ be the module that generated by $  \M_n^{^<}  ( \lambda ) $, thus $ \F \M_n^{^<} ( \lambda ) $ is a sub-module of $ \F \M_n ( \lambda ) $.

\begin{lemma} \label{lem5:2}
Let $ \Delta_n ( \lambda ) $ be the module  $ \F \M_n ( \lambda  ) / \F \M_n^{<} ( \lambda  )$ of $ \TM $, where $ \lambda \in \Lambda $. Then the module $ \Delta_n ( \lambda ) $ has the set $ \tm^{| \rangle } [ \lambda  ] $ as a basis.
\end{lemma}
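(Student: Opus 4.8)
The plan is to show that the images of the half-diagrams in $\tm^{|\rangle}[\lambda]$ under the quotient map $\pi : \F\M_n(\lambda) \to \Delta_n(\lambda)$ both span $\Delta_n(\lambda)$ and are linearly independent there. The key observation is that $\M_n(\lambda)$, the chosen basis of $\F\M_n(\lambda)$, decomposes as a disjoint union: a link state in $\M_n(\lambda)$ has, for each colour $\Cc_j$, exactly $\lambda_j - 2t_j$ defects of colour $\Cc_j$ for some $0 \le t_j \le [\lambda_j/2]$. Those basis elements with $t_j = 0$ for \emph{every} $j$ — i.e. with precisely $\lambda_j$ defects of colour $\Cc_j$ for all $j$ — are exactly the elements of $\tm^{|\rangle}[\lambda]$, since cutting a diagram $d \in \tm[\lambda]$ with $\#_j(d) = \lambda_j$ leaves a top half-diagram with $\lambda_j$ uncut propagating lines, hence $\lambda_j$ defects, of colour $\Cc_j$; and conversely any such link state is realised as the top half of some $d \in \tm[\lambda]$ (pair it with its own mirror image). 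All remaining elements of $\M_n(\lambda)$ have $t_j \ge 1$ for at least one $j$, so they carry $\lambda_j - 2t_j \le \lambda_j - 2$ defects of that colour and therefore lie in $\M_n(\lambda_0, \dots, \lambda_j - 2, \dots, \lambda_{m-1})$, i.e. in $\M_n^{<}(\lambda)$.

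Carrying this out: first I would record the set equality $\M_n(\lambda) = \tm^{|\rangle}[\lambda] \;\sqcup\; \big(\M_n(\lambda) \cap \M_n^{<}(\lambda)\big)$ from the defect-count dichotomy above, together with the reverse inclusion $\M_n^{<}(\lambda) \subseteq \M_n(\lambda)$ as sets of link states (every link state with $\lambda_j - 2$ defects of colour $\Cc_j$, the other colours unchanged, also has defect numbers of the required form $\lambda_i - 2t_i$ relative to $\lambda$). Consequently $\F\M_n^{<}(\lambda)$ is precisely the subspace of $\F\M_n(\lambda)$ spanned by $\M_n(\lambda) \cap \M_n^{<}(\lambda)$, a sub-collection of our fixed basis. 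Then $\Delta_n(\lambda) = \F\M_n(\lambda)/\F\M_n^{<}(\lambda)$ has, as a basis, the images of the complementary basis elements, namely $\pi\big(\tm^{|\rangle}[\lambda]\big)$; and $\pi$ restricted to $\tm^{|\rangle}[\lambda]$ is injective because distinct elements of a basis map to distinct (nonzero) basis elements of the quotient. This gives both spanning and linear independence at once, which is the claim.

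The one point needing genuine care — and the main obstacle — is the identification $\tm^{|\rangle}[\lambda] = \{\, s \in \M_n(\lambda) \mid s \text{ has exactly } \lambda_j \text{ defects of colour } \Cc_j \text{ for all } j \,\}$, in particular the surjectivity half: every $((n_0,\lambda_0),\dots,(n_{m-1},\lambda_{m-1}))$-link state with $\sum_j n_j = n$ must actually arise by cutting a genuine element of $\tm[\lambda] \subseteq \tm$. Here I would invoke the diagram calculus already set up in the excerpt: given such a half-diagram $s$, reflect it to get a bottom half-diagram $\bar s$ with the same colour-wise defect numbers, and glue $s$ atop $\bar s$ along their common border — matching each $\Cc_j$-defect of $s$ to the corresponding $\Cc_j$-defect of $\bar s$ — to obtain a diagram $d$. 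One checks $d$ is planar colour-by-colour and all its blocks have size $2$, so $d \in \tm$, and $\#_j(d) = \lambda_j$ by construction, so $d \in \tm[\lambda]$ and $s \in \tm^{|\rangle}[\lambda]$; this is exactly the gluing procedure for Temperley–Lieb half-diagrams described earlier, applied in each colour separately. Well-definedness on isotopy classes is automatic since cutting was already noted to be well defined on classes.
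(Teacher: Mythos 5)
Your argument is correct and follows essentially the same route as the paper: the quotient kills exactly those basis link states having fewer than $\lambda_j$ defects of some colour $\Cc_j$ (these span $\F \M_n^{<}(\lambda)$), so the surviving basis elements are precisely those with exactly $\lambda_j$ defects of each colour, which you correctly identify with $\tm^{| \rangle }[\lambda]$ via the reflect-and-glue construction. Your write-up simply makes explicit the basis decomposition and the surjectivity of cutting, points the paper's two-sentence proof leaves implicit.
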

\begin{proof}
The quotient  $ \F \M_n ( \lambda ) / \F \M_n^{<} ( \lambda) $ sends any link state with less than $ \lambda_j $ defects of the colour $ \Cc_j $ for each $j$ to be zero. Thus the left multiplication by any diagram in the set $ \tm $ will be either zero or a half-diagram in $ \tm^{| \rangle } [ \lambda  ] $.
\end{proof}

\section{ Cellularity of the bubble algebra }

\begin{theorem}\cite[Proposition 1.3.2]{JM}\textbf{.}
 The algebra $ \TM ( \breve{\delta} ) $ is cellular over any field, with  the involution sending each diagram to its reflection in the horizontal plane, and the indexing set $ \Lambda = \bigcup_{ v=0}^{ [^n/_2] } \Gamma _{(n-2v,m)} $. The order on the set $  \Lambda $ is defined by 
\begin{center} 
 $ \lambda \geq  \lambda' $ f and only if $\lambda_j \leq  \lambda'_j $ for each $ j$.
\end{center}
The modules $ \Delta_n ( \lambda ) $ where $ \lambda \in  \Lambda $ are cell modules of the algebra $\TM$. 
\end{theorem}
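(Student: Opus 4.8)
The plan is to verify the three axioms of a cellular algebra in the sense of Graham and Lehrer directly, using the diagram combinatorics already set up for $\TM$. Recall that a cellular structure on an algebra $A$ consists of a poset $(\Lambda, \geq)$, finite sets $M(\lambda)$ of ``tableaux'', a collection of elements $C^{\lambda}_{S,T}$ for $S,T \in M(\lambda)$ forming a basis of $A$, and an anti-involution $*$, subject to: (C1) $*$ sends $C^{\lambda}_{S,T}$ to $C^{\lambda}_{T,S}$; (C2) for all $a \in A$, $a\,C^{\lambda}_{S,T} \equiv \sum_{S'} r_a(S',S)\,C^{\lambda}_{S',T} \pmod{A^{<\lambda}}$, where the scalars $r_a(S',S)$ are independent of $T$ and $A^{<\lambda}$ is the span of the $C^{\mu}_{U,V}$ with $\mu < \lambda$.

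First I would fix the data: the poset is $\Lambda = \bigcup_{v=0}^{[n/2]} \Gamma_{(n-2v,m)}$ with $\lambda \geq \lambda'$ iff $\lambda_j \leq \lambda'_j$ for all $j$; the index set $M(\lambda)$ is the set of half-diagrams $\tm^{|\rangle}[\lambda]$, i.e.\ the $((n_0,p_0),\dots)$-link states with $\lambda_j$ defects of colour $\Cc_j$; and for $S, T \in M(\lambda)$ the basis element $C^{\lambda}_{S,T}$ is the bubble diagram in $\tm$ obtained by gluing the half-diagram $T$ on top of the reflection of the half-diagram $S$ on the bottom, with the $\lambda_j$ defect lines of colour $\Cc_j$ joined up to become propagating lines. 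I would first check this gluing is a bijection between $\tm^{|\rangle}[\lambda] \times \tm^{\langle|}[\lambda]$ (identified via reflection with $M(\lambda)\times M(\lambda)$) and $\tm[\lambda]$, which is exactly the statement already noted in the text that a bubble diagram is uniquely reconstructed from its two half-diagrams when they share the colour-wise defect counts; since $\tm = \bigsqcup_{\lambda\in\Lambda} \tm[\lambda]$ by definition of $\#_j$, the $C^{\lambda}_{S,T}$ form a basis of $\TM$.

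Next, (C1) is immediate: reflecting a glued diagram in the horizontal plane swaps the top and bottom half-diagrams, and since the loop-replacement scalars are unaffected by reflection, $(C^{\lambda}_{S,T})^* = C^{\lambda}_{T,S}$; I would also note $*$ is an anti-automorphism because composition of diagrams reverses under reflection. The substantive step is (C2). Here I would compute $x\,C^{\lambda}_{S,T}$ for a diagram $x \in \tm$ by stacking $x$ above $C^{\lambda}_{S,T}$: the bottom half-diagram $T$ (really its reflection) is untouched, so the result is a $\F$-multiple (the product $\prod_i \delta_i^{c_i}$ of colour-indexed loop scalars) of a diagram whose bottom half is still $T$. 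Its propagating-number vector $(\#_j)$ can only drop or stay equal in each colour, by the inequality $\#_j(xd)\le \min(\#_j(x),\#_j(d))$ recorded in the text; if it drops in any colour we land in a strictly higher $\mu > \lambda$, i.e.\ in $\TM^{<\lambda}$; if it stays equal in every colour the result is again some $C^{\lambda}_{S',T}$ times a scalar, and crucially that scalar and the new top half $S'$ depend only on $x$ and on the top half $S$ (equivalently on the link state $x\cdot S$ computed in $\F\M_n(\lambda)$ modulo $\F\M_n^{<}(\lambda)$, i.e.\ in $\Delta_n(\lambda)$ by Lemma~\ref{lem5:2}), not on $T$. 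This gives $x\,C^{\lambda}_{S,T} \equiv \sum_{S'} r_x(S',S)\,C^{\lambda}_{S',T} \pmod{\TM^{<\lambda}}$ with $r_x$ independent of $T$, and linearity in $x$ extends it from diagrams to all of $\TM$. Finally I would identify the resulting cell module with $\Delta_n(\lambda)$: by construction the left action of $\TM$ on $\mathrm{span}\{C^{\lambda}_{S,T}\}/\TM^{<\lambda}$ for fixed $T$ is precisely the action on half-diagrams described in Lemma~\ref{lem5:1} after passing to the quotient of Lemma~\ref{lem5:2}, so the cell module indexed by $\lambda$ is exactly $\Delta_n(\lambda)$.

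The main obstacle is the bookkeeping in (C2): one has to be careful that when stacking $x$ on the glued diagram, no ``accidental'' propagating line is created that would change $\#_j$, and that the colour-wise loop counts $c_i$ appearing in the scalar genuinely factor through the top half only. Both follow from the diagrammatic fact that the bottom half-diagram is literally preserved sub-diagram-wise under left multiplication and that the monochromatic planarity/pair-block conditions are closed under composition (already proved when $\tm$ was shown to be a subalgebra), so the argument is routine once set up, but it is the step that needs the most care. Everything else is a translation of the Temperley-Lieb cellular structure, applied colour by colour and then assembled.
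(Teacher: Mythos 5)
The paper does not actually prove this statement; it is quoted from Jegan's thesis, and your direct verification of the Graham--Lehrer axioms (half-diagram factorisation $\tm[\lambda]\cong \tm^{|\rangle}[\lambda]\times\tm^{\langle|}[\lambda]$, reflection as the anti-involution, and the filtration by the colour-wise propagating vector) is exactly the standard argument for diagram algebras and matches the cited proof in substance. The reasoning is sound, including the key observations that left multiplication leaves the bottom half-diagram untouched and that the loop scalars are created entirely in the interaction of $x$ with the top half, so $r_x(S',S)$ is independent of $T$. One small inconsistency to fix: you state axiom (C2) modulo $A^{<\lambda}$ in the Graham--Lehrer convention, but then say that a drop in some $\#_j$ lands you in ``a strictly higher $\mu>\lambda$, i.e.\ in $\TM^{<\lambda}$''; with the order as the paper defines it (fewer propagating lines of each colour means \emph{larger}), the error terms lie in the ideal spanned by indices $\mu>\lambda$, so you should either reverse the order or state the axiom modulo $A^{>\lambda}$ --- a pure convention matter, not a mathematical gap.
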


Each cell module  $ \Delta_n ( \lambda ) $ comes with a contravariant inner product via its basis of top half-diagrams, defined as follows: let $ d,d' \in \tm [ \lambda ] $, $ x= \langle d| $ and $ y= | d' \rangle $, so
\[ dd'=|d \rangle \langle d | \; |d' \rangle \langle d' | =  \langle d| | d' \rangle \; |d \rangle \langle d' |  =:  \langle d| | d' \rangle d'', \]
so $ \langle x, y \rangle = \left\{ \begin{array}{ll}
\langle d| | d' \rangle \;\; & \text{ if} \; d'' \in \tm [ \lambda  ], \\
0 & \text{otherwise} .
\end{array} \right. $

Let $ \Gr_n ( \lambda ) $ to be the Gram matrix of the previous inner product on the cell module $ \Delta_n ( \lambda ) $ with respect to half-diagrams basis. Since we work over a field, we can check when the module $ \Delta_n ( \lambda ) $ is simple by computing $ \det \Gr_n ( \lambda ) $ as long as $ \langle \; , \; \rangle \neq 0 $. Grimm and Martin \cite{GM2} showed that the cell modules $ \Delta_n (\lambda_0, \lambda_1 ) $ are generically simple.

Let $ \Lambda^0 $ be subset of $ \Lambda  $ that contains all $ \lambda \in \Lambda  $ such that $ \langle \; , \; \rangle \neq 0 $. Note that when $ \delta_j \neq 0 $ for some $  j  $, then $ \Lambda ^0= \Lambda$, since we can take a half diagram with all the arcs of the colours corresponding to non-zero scalars. Even if $ \delta_j=0 $ for all $ j$, then for each cell module $ \Delta_n ( \lambda ) $ with $ \sum_{j=0}^{m-1} \lambda_j \neq 0 $, the inner product $  \langle \;  , \;  \rangle \neq 0 $ because we can still find diagrams such their product is equal to one.  Thus $ \Lambda^0 = \Lambda $ unless $ n $ is an even integer and $ \delta_i=0 $ for each $ i \in \Z_m $. In the case $ n $ is an even integer and $ \delta_i=0 $ for each $ i \in \Z_m $, then $ \Lambda^0 = \Lambda\setminus \{ (0, \dots , 0 ) \}$. Then the bubble algebra $ \TM ( \breve{\delta} ) $ is a quasi-hereditary if and only if $ \delta_j \neq 0 $ for some $ 0 \leq j < m $ or $ n $ is an odd integer.

 Let $ \TN^{+}(\breve{\delta}) $ be the subspace of $ \TN (\breve{\delta}) $ that spanned by all the diagrams in $ \tn $ which have an even number of blue-nodes on the top face. Since making an arc needs two nodes on the same face, thus the number of blue-nodes on the bottom face of the diagrams in $ \TN^{+}(\breve{\delta}) $ will be also an even number. The composition of two diagrams in $ \TN (\breve{\delta}) $ does not change the number of blue-nodes on top face of the first diagram, thus $ \TN^{+}(\breve{\delta}) $ is an algebra with an identity equals to the sum of all coloured images of $ \id \in \Sym_n $ that have an even number of blue-propagating lines. Similarly, define $ \TN^{-}(\breve{\delta}) $ to be the subspace of $ \TN (\breve{\delta}) $ that spanned by all the diagrams in $ \tn $ which have an odd number of blue-nodes on the top face. Also, $ \TN^{-}(\breve{\delta}) $ is an algebra with identity equals to the sum of all coloured image of $ \id \in \Sym_n $ that have an odd number of blue-propagating lines.

\begin{lemma} 
For any $ n >0 $, we have $ \TN (\breve{\delta})  =  \TN^{+}(\breve{\delta}) \oplus \TN^{-}(\breve{\delta}) $, as algebra.
\end{lemma}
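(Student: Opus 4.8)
# Proof Proposal

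The plan is to exhibit the decomposition explicitly by using the two idempotents already identified in the paragraph preceding the statement. Write $e^{+}$ for the sum of all coloured images of $\id \in \Sym_n$ having an even number of blue propagating lines, and $e^{-}$ for the analogous sum with an odd number. First I would observe that $e^{+}$ and $e^{-}$ are the identity elements of $\TN^{+}(\breve\delta)$ and $\TN^{-}(\breve\delta)$ respectively, as noted in the excerpt, and that $e^{+} + e^{-} = 1_{\TN}$ since every coloured image of $\id$ has either an even or an odd number of blue propagating lines and these exhaust $\Xi^{n,2}$. It is also immediate from the definitions that $e^{+}e^{-} = e^{-}e^{+} = 0$: concatenating two identity-on-nodes diagrams with different blue-node parities forces some node to change colour, which kills the product in $\Pm$ (the condition $\Bot(\alpha_i)=\Top(\beta_i)$ fails). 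Hence $e^{+}, e^{-}$ are orthogonal idempotents summing to $1$, giving a direct sum decomposition of $\TN$ as a left module over itself, $\TN = \TN e^{+} \oplus \TN e^{-}$.

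Next I would identify the two-sided pieces. The key point is that the number of blue nodes on the top face of a diagram $D \in \tn$ is unchanged under both left and right composition by any diagram in $\tn$ for which the product is nonzero: composing $B \circ D$ (resp. $D \circ B$) identifies the bottom (resp. top) row of one diagram with a row of the other, and since a nonzero product requires the colourings to match along the identified row, the top blue-node count of the outer factor is preserved. Therefore $\TN^{+}(\breve\delta)$ and $\TN^{-}(\breve\delta)$ are each closed under multiplication on both sides within $\TN$, i.e. they are two-sided ideals, and one checks $\TN^{+}(\breve\delta) = e^{+}\TN e^{+} = \TN e^{+}$ and similarly for the minus part. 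Combining this with the orthogonal idempotent decomposition of the previous paragraph yields $\TN = \TN^{+}(\breve\delta) \oplus \TN^{-}(\breve\delta)$ as an (internal) direct sum of two-sided ideals, which is precisely a direct sum of algebras since $e^{+}$ and $e^{-}$ are central in $\TN$ (centrality follows from the parity-preservation of multiplication on both sides).

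I would then assemble these observations into a short formal argument: (1) $\{e^{+}, e^{-}\}$ is a complete set of orthogonal idempotents; (2) each $e^{\pm}$ is central, because conjugating any basis diagram by $e^{\pm}$ returns the diagram or $0$ according to its top blue-node parity, and this parity is a two-sided invariant; (3) hence $\TN = \TN e^{+} \oplus \TN e^{-}$ with each summand a unital subalgebra with identity $e^{\pm}$; (4) finally $\TN e^{\pm}$ coincides with $\TN^{\pm}(\breve\delta)$ as defined, by comparing spanning sets of diagrams. The only mild subtlety — and the step I would write out most carefully — is step (2), the verification that left and right multiplication in $\TN$ genuinely preserve the top blue-node count, including the bookkeeping that a product which would change this count is exactly one of the products that vanishes in $\Pm$ (respectively, that would require a blue loop to be replaced but with a mismatched colour distribution). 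Once that invariance is pinned down, everything else is formal, so I do not expect any real obstacle; the result is essentially a block decomposition along a $\Z/2$-grading of the diagram basis by top blue-node parity.
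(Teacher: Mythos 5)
Your proposal is correct and rests on the same core observation as the paper's own (very short) proof: the basis diagrams of $\TN(\breve\delta)$ split according to the parity of blue nodes on the top face, the two spans intersect trivially, and any product of diagrams of opposite parity vanishes because the colour distributions along the identified row cannot match (top and bottom blue-node parities of a single diagram agree, since arcs use two nodes of one colour). The additional packaging via the orthogonal central idempotents $e^{+}$ and $e^{-}$ is sound but not needed; the paper simply verifies spanning, trivial intersection, and vanishing cross products directly.
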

\begin{proof}
This come from the fact any diagram in $ \tn $ will have even number or odd number of blue-nodes on the top face, so $ \TN (\breve{\delta})  = \TN^{+}(\breve{\delta}) +  \TN^{-}(\breve{\delta}) $. Furthermore, it is clear that $ \TN^{+}(\breve{\delta}) \cap \TN^{-}(\breve{\delta}) $ is zero and the product of any two elements from $ \TN^{+}(\breve{\delta})$ and $ \TN^{-}(\breve{\delta}) $ will be zero.
\end{proof}

As consequence of last lemma, to study the representations of the algebra $  \TN (\breve{\delta}) $, it is enough to study the representations of the algebras $ \TN^{+}(\breve{\delta}) $ and $  \TN^{-}(\breve{\delta}) $.

\section{Idempotent Localisations}

 \hspace{15pt} Let $ \mu \in \Gamma_{ (n,m) } $, we define $ \muu $ to be 
\[ (\{1, \dots, \mu_0 \} , \{1+\mu_0, \dots, \mu_0+\mu_1 \}, \dots, \{1+\sum_{j=0}^{m-2} \mu_j, \dots, n \} )  .\]

\begin{proposition}
Let $ (A_i) \in \Xi^{n,m} $ and $ \#_j ( 1_{(A_i)}) = \mu_j $ for each $ j $, then the elements $ 1_{(A_i)} $ and $ 1_{ \muu } $ are conjugate in the algebras $ \TM $ and $ \PM$.
\end{proposition}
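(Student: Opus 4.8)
The plan is to realise the conjugacy by a single ``colour-respecting bijection'' diagram. Write $I_j := \{\,1+\sum_{k<j}\mu_k,\dots,\sum_{k\le j}\mu_k\,\}$ for the $j$-th block, so that $\muu=(I_0,\dots,I_{m-1})$ and $1_{\muu}$ is the diagram joining $a$ to $a'$ in colour $\Cc_j$ exactly when $a\in I_j$. Since $(A_i)\in\Xi^{n,m}$ the sets $A_0,\dots,A_{m-1}$ partition $\underline n$ with $|A_j|=\#_j(1_{(A_i)})=\mu_j=|I_j|$, so I can fix a bijection $\pi\colon\underline n\to\underline n$ with $\pi(A_j)=I_j$ for every $j$; moreover I will take $\pi|_{A_j}$ to be the order-preserving bijection $A_j\to I_j$.

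First I would define $w\in\ppm$ to be the diagram whose parts are $\{a,\pi(a)'\}$ for $a\in\underline n$, with $\{a,\pi(a)'\}$ coloured $\Cc_j$ when $a\in A_j$, and $w^{*}$ its horizontal reflection (parts $\{\pi(a),a'\}$, coloured $\Cc_j$ for $a\in A_j$). Then I would check $w\in\tm$: every part has size two, and two same-colour strands join $a<a'$ in $A_j$ to $\pi(a)<\pi(a')$ in $I_j$, hence do not cross, while crossings between different colours are permitted; so $w,w^{*}\in\TM\subseteq\PM$. Note also that $w\in 1_{(A_i)}\TM 1_{\muu}$ and $w^{*}\in 1_{\muu}\TM 1_{(A_i)}$ since $\Top(w)=(A_i)$, $\Bot(w)=\muu$, and symmetrically for $w^{*}$.

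The heart of the proof is the two computations $ww^{*}=1_{(A_i)}$ and $w^{*}w=1_{\muu}$. Here $\Bot(w_i)$ and $\Top(w^{*}_i)$ both realise the colour pattern $\muu$ on the middle row (because $\pi(A_j)=I_j$), so $ww^{*}$ does not vanish; it is a composite of two mutually inverse bijection diagrams, so no closed loops appear and every $c_i=0$, and gluing $\{a,\pi(a)'\}$ of $w$ to $\{\pi(a),a'\}$ of $w^{*}$ produces the part $\{a,a'\}$ in colour $\Cc_j$ for each $a\in A_j$, i.e. $1_{(A_i)}$. Symmetrically $\Bot(w^{*}_i)$ and $\Top(w_i)$ realise the pattern $(A_i)$, so $w^{*}w$ is again loop-free with parts $\{\pi(a),\pi(a)'\}$ in colour $\Cc_j$ for $a\in A_j$; reindexing by $\ell=\pi(a)\in I_j$ this is exactly $1_{\muu}$. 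Hence $w,w^{*}$ witness that $1_{(A_i)}$ and $1_{\muu}$ are conjugate idempotents in $\TM$, and a fortiori in $\PM$; in particular $1_{(A_i)}\TM 1_{(A_i)}\cong 1_{\muu}\TM 1_{\muu}$ via $x\mapsto w^{*}xw$.

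I expect the only real care is the colour bookkeeping: the definition of multiplication in $\PM$ kills a product whenever the colour patterns on the glued row disagree, so the colouring of $w$ must be set up precisely so that the middle rows of $ww^{*}$ and of $w^{*}w$ match on both sides; and one must notice that the order-preserving choice of $\pi$ is exactly what keeps $w$ planar in each colour, hence inside $\TM$ and not merely $\PM$. If a conjugating \emph{unit} is wanted rather than an equivalence of idempotents, then in $\PM$ (where planarity is irrelevant) one may instead take $g=\sum_{(B_i)\in\Xi^{n,m}}w_{(B_i)}$ with $w_{(B_i)}$ the $\pi$-permutation diagram coloured by the pattern $(B_i)$; the same bookkeeping shows $gg^{*}=g^{*}g=1_{\PM}$ and $g^{-1}1_{(A_i)}g=1_{\muu}$.
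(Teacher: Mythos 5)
Your construction is essentially the paper's: your order-preserving bijection $\pi$ with $\pi(A_j)=I_j$ is exactly the permutation $\theta$ used there, your $w$ and $w^{*}$ are the coloured diagrams $\theta^{(A_i)}$ and $(\theta^{-1})_{(A_i)}$, and the two computations $ww^{*}=1_{(A_i)}$, $w^{*}w=1_{\muu}$ (no loops, colour patterns matching on the glued row) are correct and are the heart of the matter. Your observation that the order-preserving choice of $\pi$ is what keeps $w$ inside $\tm$ is also the right point, and you are correct that the ``all-patterns'' unit $g=\sum_{(B_i)}w_{(B_i)}$ need only live in $\PM$, since $\pi$ restricted to an arbitrary $B_j$ need not be order-preserving onto its image.

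The one genuine gap is that the proposition asserts conjugacy in $\TM$, i.e.\ the existence of a \emph{unit} $D\in\TM$ with $D^{-1}1_{(A_i)}D=1_{\muu}$ (this is explicitly what the paper sets out to produce), whereas in $\TM$ you only establish Murray--von Neumann equivalence of the two idempotents via $w,w^{*}$; your explicit unit is confined to $\PM$. The fix uses only pieces you already have: assuming $(A_i)\neq\muu$ (otherwise there is nothing to prove), set
\[ D \;=\; w+w^{*}+\sum_{B\in\Xi^{n,m}\setminus\{(A_i),\,\muu\}}1_{B}\;\in\;\TM . \]
Since $\Top(w)=\Bot(w^{*})=(A_i)$ and $\Bot(w)=\Top(w^{*})=\muu$, all cross terms in $D^{2}$ vanish except $ww^{*}+w^{*}w+\sum_{B}1_{B}1_{B}$, so $D^{2}=1_{\PM}$ and $D$ is a self-inverse unit; moreover $1_{(A_i)}D=w$ and $Dw=w^{*}w=1_{\muu}$, so $D\,1_{(A_i)}\,D=1_{\muu}$, giving conjugacy in $\TM$ and a fortiori in $\PM$. (Note that one must exclude \emph{both} $(A_i)$ and $\muu$ from the sum of identity diagrams; the paper's own $D=\theta^{(A_i)}+\sum_{B\neq(A_i)}1_{B}$ retains the summand $1_{\muu}$ and so is not actually invertible as written --- your bookkeeping, once completed as above, is the cleaner version of the same argument.)
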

\begin{proof}
 To show that we need to define an invertible element $ D \in \TM $ such that $ D^{-1} 1_{ (A_i)} D =  1_{ \muu } $. Claim that the element 
\[  \theta^{ (A_i) }+ \sum_{ B \in \Xi^{n,m} \setminus \{ (A_i)\} } 1_{ B } \] 
satisfies the previous equation, where $ \theta^{ (A_i) } $ is the multi-colour partition obtained from colouring a permutation $ \theta $ with top equals $ (A_i) $, and $ \theta $ is specific permutation changes the order of coloured lines without crossing lines that have the same colour.

Lets define the map $ \theta \in \Sym_n $ as follows:  assume that $ i \in \underline{n} $ and $ i \in A_j $, and define $ \theta(i) $ to be $ \nu_{i,j} + \sum\limits_{k <  j } \mu_k $, where $ \nu_{i,j} $ be the number of integers $ l \in \underline{n} $ that smaller than $ i $ and $ l \in A_j $.

We are going to show that $ \theta \in \Sym_n $, by proving that $ \theta $ is an injective map. It is obvious that $ \theta $ is well-defined.  Assume that $ i_1, i_2 \in \underline{n} $ without loss of generality we can say that $ i_1< i_2 $. Now there are two probabilities, $ i_1, i_2 \in A_j $ or $ i_1 \in A_{j_1} \neq A_{j_2} \ni i_2$. If $ i_1, i_2 \in A_j $ for some $ j $, then $ \nu_{i_1,j} <  \nu_{i_2,j} $ so $ \theta(i_1) < \theta(i_2) $. On the other side when $ j_1 \neq j_2 $, if $ j_1 < j_2 $, so $ \theta(i_1)= \nu_{i_1,j_1} + \sum\limits_{k < j_1 } \mu_k \leq \sum\limits_{k < j_1+1 } \mu_k < \theta(i_2)$. Similarly, if $ j_2 < j_1 $, thus  $ \theta(i_2) < \theta(i_1)$. Therefore $ \theta $ is an injective.

From the way that we define $ \theta $, it is evidential that $ \theta^{(A_i)} \in \TM $  since if $ i , j \in A_h $ for some $ h $ where $ i<j $, so $ \theta(i) < \theta(j) $ this implies that there is no crossing lines with same colour. Similarly, the diagram $ \big( \theta^{-1} \big) _{ (A_i) }$, the coloured image of  $ \theta^{-1} $ with bottom equals $ (A_i) $, is contained in $ \TM $ because by flipping the diagram $ \big( \theta^{-1} \big) _{ (A_i)}$ we obtain $ \theta^{(A_i)}$. Also note that $ \Bot ( \theta^{ (A_i) } ) = \muu $.

Finally, take $ D= \theta^{ (A_i)}+ \sum\limits_{B \in \Xi^{n,m} \setminus \{ (A_i)\} } 1_{B} $ and $ D'= \big( \theta^{-1} \big) _{ (A_i) }+ \sum\limits_{ B \in \Xi^{n,m} \setminus \{ \muu \}} 1_{B}$.  Note that $ D, D' \in \TM $, $ DD'=1_{\TM}=D'D $ and $ D 1_{(A_i) } D' = 1_{ \muu }$. 
\end{proof}

Jegan\cite{JM} proved in Theorem 3.1.4, for any $ \mu \in \Gamma_{ (n,m) } $ the algebras $ \bigotimes_{i=0}^{m-1}  \mathsf{TL}_{ \mu_i} ( \delta_i) $ and $ 1_{ \muu } \TM( \breve{ \delta} ) 1_{ \muu } $ are isomorphic with a map sending any tuple of diagrams in  the algebra $ \bigotimes_{i=0}^{m-1}  \mathsf{TL}_{ \mu_i} ( \delta_i) $ to the diagram in $ 1_{ \muu } \TM( \breve{ \delta} ) 1_{ \muu } $ formed by drawing these diagrams in one frame one by one using different  colours such that the diagram from $\mathsf{TL}_{ \mu_i} ( \delta_i) $ is drawn in the colour $ \Cc_i $. Similarly, if $ \V_{\mu_0,p_0}, \dots , \V_{\mu_{m-1},p_{m-1} }$ are cell modules for the algebras $\mathsf{TL}_{ \mu_0} ( \delta_0) , \dots ,  \mathsf{TL}_{ \mu_{m-1} } ( \delta_{m-1} ) $ respectively, then elements of the module $ \bigotimes_{i=0}^{m-1}  \V_{\mu_i,p_i} $ can be represented by $ \big( (\mu_i,p_i) \big)_{i \in \Z_m} $- link states, see Figure \ref{fig5:3}, by using the same map which it is the same isomorphism that Jegan used in the proof of the fact: let $ \lambda \in \Lambda $ and $ \mu \in \Gamma_{ (n,m) } $, then 
\begin{align*}
1_{ \muu }  \Delta_n ( \lambda ) & \cong \left\lbrace
 \begin{array}{ l l }
  \bigotimes\limits_{j=0}^{m-1}   \V_{ \mu_j ,t_j}   & \quad \text{ if} \; \mu_j - \lambda_j = 2t_j \; \text{ for each}  \; j  \; \text{for some }\;  t_j \in \mathbb{N}, \\ 
 0  & \quad \text{otherwise},
\end{array} \right.
\end{align*}
as $ 1_{ \muu } \TM 1_{ \muu } $-module.

\begin{figure}[!th] 
\caption{Illustration of a map from $ \V_{3,1} \otimes \V_{2,1} $ to $ 1_{(0,0,0,1,1)} \Delta_5(1,0) $ }
\label{fig5:3}
  \centering
    \includegraphics[width=60mm,scale=1.2]{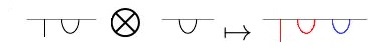}
\end{figure}

  \underline{ \textit{Important convention} }:  whenever $ \bigotimes\limits_{i=0}^{m-1}  \V_{\mu_i,p_i} $ or $ \bigotimes\limits_{i=0}^{m-1}  M_i $ are mentioned, where $ M_i $ is a sub-module or quotient module of $ \V_{\mu_i,p_i} $, we mean their image in $ 1_{ \muu } \Delta_n ( \lambda ) $ under the previous isomorphism.

A basis of $ \Delta_n ( \lambda ) $ is the set that contains all $ \big(( \lambda_j+2p_j,p_j)\big)_{j \in \Z_m} $-link states where $ p_0 , \dots , p_{m-1}$ are non-negative integers such that $ \sum_{j \in \mathbb{Z}_m } ( \lambda_j+2p_j)=n $, which is the same as the basis $ \tm^{| \rangle } [ \lambda  ] $. Each $ \big((n_j,p_j)\big)_{j \in \Z_m} $-link state  determines a collection of $(n_j,p_j)$-link states as they are defined in Section \ref{sec1}, where each $j$ represents the colour $ \Cc_j $, by omitting all the parts that have colour not $ \Cc_j $, thus
\[  \Delta_n ( \lambda ) = \sum_{ u \in \Gamma_{(v ,m)} } \; \sum_{ \sigma  \in \widehat{\Sym}_{n,m} } \sigma \big( \,  \bigotimes_{i=0}^{m-1}  \V_{ \lambda_i+2u_i,u_i } \, \big), \]
where $ \lambda \in \Gamma_{(n-2v,m)} $. For example, take $ \alpha $ to be the $ ((3,1),(2,0),(4,1)) $-link state  \includegraphics[width=30mm]{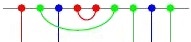}, so $ \alpha $ can be consider as a collection of the next link states:   
\begin{figure}[h] 
  \centering
    \includegraphics[width=60mm]{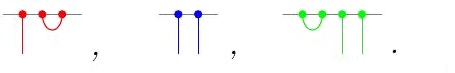}
\end{figure}

Let $ a = | D \rangle \in \Delta_n ( \lambda )  $ for some $ D \in \mathcal{T}_{n,m} [ \lambda ]$. The distribution of the colours of $ a $ is the set $ \Top ( D) $. This set will be denoted by $ \Top ( a ) $.

 Let $a$ be a $ \big( ( \lambda_j+2p_j,p_j) \big)_{j \in \Z_m} $-link state and $b$ be a $ \big( ( \lambda_j+2p'_j,p'_j) \big)_{j \in \Z_m} $-link state where $  \sum_{j \in \mathbb{Z}_m } p_j= \sum_{j \in \mathbb{Z}_m } p'_j $. It is evident that  $ \langle a, b \rangle =0 $ unless $ p_j=p'_j $ for each $ j$ and the distributions of the colours of $a$ and $b$ are same. When $ p_j=p'_j $ for each $ j$ and $ \Top ( a)=\Top ( b) $, and $a_{ j}$ be the $(n_j,p_j)$-link state which is obtained from $ a $ by omitting all the parts that have colour not $ \Cc_j $. Similarly, we define $ b_{ j } $. From the graphical visualization of the product on the algebra $ \TM$, we obtain 
 \begin{align} \label{eq5:InnerProd}
 \langle a, b \rangle & = \langle a_{ 0 } , b_{ 0}  \rangle_{n_0,p_0, \delta_0} \times  \cdots \times  \langle a_{ m-1}, b_{ m-1 } \rangle_{n_{m-1},p_{m-1}, \delta_{m-1}}  \; ,
 \end{align}
where $ \langle a_j, b_j \rangle_{n_j,p_j, \delta_j }$ denotes the standard bilinear form on $ \V_{n_j, p_j} $ as $ \mathsf{TL}_{n_j}( \delta_{j}) $-module. Note that distribution of colours, if it matches up, does not play any rule. In other words, if $a, b, c$ and $d$ be $ \big( (n_j,p_j) \big)_{j \in \Z_m} $-link states such that $ a_{ j }=c_{ j }$ and $ b_{ j }= d_{ j }$, then $  \langle a, b \rangle =  \langle c,d \rangle  $ if $ \Top ( a ) =\Top ( b ) $ and $ \Top ( c ) = \Top ( d )  $. Note that $ a $ and $ c $ may have different distributions of colours. As consequence of this, we have the following theorem.
\begin{theorem} \cite[lemma 3.2.10]{JM}\textbf{.} \label{thm5:77}
If $ \sum_{j \in \mathbb{Z}_m  } \lambda_j =n-2v $ for some $v$, then the Gram matrix of the cell module $ \Delta_n ( \lambda ) $ of the previous inner product with respect to half-diagrams basis can be written in the form
\[ \Gr_n ( \lambda ) = \bigoplus\limits_{ u \in \Gamma_{(v,m)} } \bigoplus\limits^{ n_{ \lambda+2u }}    \Gr_{ \lambda_0+2u_0,u_0, \delta_0 } \otimes  \dots \otimes  \Gr_{ \lambda_{m-1} +2u_{m-1},u_{m-1}, \delta_{m-1} } , \]
where $ \Gr_{ \lambda_j+2u_j,u_j, \delta_j } $ is the Gram matrix of the cell $ \mathsf{TL}_{ \lambda_j+2u_j} ( \delta_j ) $-module $ \V_{\lambda_j+2u_j,u_j} $ with a specific bilinear form and half-diagrams basis. Therefore, the determinant of Gram matrix is
\begin{align*}
 \det \Gr_n (\lambda ) = \prod\limits_{ u \in \Gamma_{(v,m)} }  \Bigg( \prod_{j=0}^{m-1} ( \det   \Gr_{ \lambda_j+2u_j,u_j, \delta_j })^{ \mathsf{d}_{ \lambda_j+2u_j,u_j}^{-1}} \Bigg)^{ ( \prod_{j=0}^{m-1} \mathsf{d}_{ \lambda_j+2u_j,u_j} ) \cdot  n_{ \lambda+2u } } \; , \nonumber
\end{align*}
 where $ \mathsf{d}_{ \lambda_j+2u_j,u_j}= \dim \V_{\lambda_j+2u_j, u_j} $ and $ n_\mu := \binom{n}{ \mu_0, \ldots , \mu_{m-1} }  $ for each $  \mu \in \Gamma_{( n,m)} $. 
\end{theorem}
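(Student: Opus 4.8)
The plan is to build the block decomposition of $\Gr_n(\lambda)$ from the combinatorial structure of the half-diagram basis $\tm^{|\rangle}[\lambda]$ and then read off the determinant as a product over the blocks. First I would fix $v$ with $\sum_j \lambda_j = n-2v$ and recall from the paragraph preceding the theorem that a basis of $\Delta_n(\lambda)$ is the set of all $\big((\lambda_j+2p_j,p_j)\big)_{j\in\Z_m}$-link states with $\sum_j(\lambda_j+2p_j)=n$, i.e.\ with $p=(p_0,\dots,p_{m-1})\in\Gamma_{(v,m)}$. For a fixed tuple $u\in\Gamma_{(v,m)}$, the link states with $p=u$ are further parametrised by two independent pieces of data: the distribution of colours $\Top(a)$ among the $n$ top nodes (subject to there being $\lambda_j+2u_j$ nodes of colour $\Cc_j$), and, for each colour $j$, the underlying $(\lambda_j+2u_j,u_j)$-link state in the Temperley--Lieb sense. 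The number of colour distributions is exactly the multinomial $n_{\lambda+2u}=\binom{n}{\lambda_0+2u_0,\dots,\lambda_{m-1}+2u_{m-1}}$.

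Next I would invoke the orthogonality observations made just before the statement: from the displayed formula \eqref{eq5:InnerProd} and the remark following it, $\langle a,b\rangle = 0$ unless $p_j=p'_j$ for all $j$ and $\Top(a)=\Top(b)$, and when these match, $\langle a,b\rangle$ factors as the product $\prod_{j} \langle a_j,b_j\rangle_{\lambda_j+2u_j,\,u_j,\,\delta_j}$ and is independent of the common colour distribution. This says precisely that, after ordering the basis so that link states are grouped first by $u\in\Gamma_{(v,m)}$ and then by colour distribution, the Gram matrix is block-diagonal: one block for each pair $(u,\text{distribution})$. Within a single such block the matrix is, entry for entry, the Kronecker product $\Gr_{\lambda_0+2u_0,u_0,\delta_0}\otimes\cdots\otimes\Gr_{\lambda_{m-1}+2u_{m-1},u_{m-1},\delta_{m-1}}$, since a basis of $\bigotimes_j \V_{\lambda_j+2u_j,u_j}$ maps bijectively to the link states with that fixed $p=u$ and that fixed distribution (this is the isomorphism $1_{\muu}\Delta_n(\lambda)\cong\bigotimes_j\V_{\mu_j,t_j}$ recalled from \cite{JM}, read off one distribution at a time), and the form transports to the tensor-product form by \eqref{eq5:InnerProd}. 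Because the block depends on the distribution only through its existence, each of the $n_{\lambda+2u}$ distributions contributes an identical copy, giving the $\bigoplus^{n_{\lambda+2u}}$ in the stated formula.

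Finally I would compute the determinant. Taking $\det$ of a block-diagonal matrix multiplies the determinants of the blocks, so $\det\Gr_n(\lambda)=\prod_{u\in\Gamma_{(v,m)}}\big(\det(\Gr_{\lambda_0+2u_0,u_0,\delta_0}\otimes\cdots)\big)^{n_{\lambda+2u}}$. For the Kronecker product of square matrices one has the standard identity $\det(X\otimes Y)=(\det X)^{\dim Y}(\det Y)^{\dim X}$; iterating over the $m$ factors and using $\dim\V_{\lambda_j+2u_j,u_j}=\mathsf{d}_{\lambda_j+2u_j,u_j}$ gives
\[
\det\big(\textstyle\bigotimes_{j}\Gr_{\lambda_j+2u_j,u_j,\delta_j}\big)=\prod_{j=0}^{m-1}\big(\det\Gr_{\lambda_j+2u_j,u_j,\delta_j}\big)^{\prod_{k\neq j}\mathsf{d}_{\lambda_k+2u_k,u_k}},
\]
which I would rewrite as $\big(\prod_{j}(\det\Gr_{\lambda_j+2u_j,u_j,\delta_j})^{\mathsf{d}_{\lambda_j+2u_j,u_j}^{-1}}\big)^{\prod_k \mathsf{d}_{\lambda_k+2u_k,u_k}}$ to match the paper's bookkeeping; substituting into the product over $u$ and folding in the exponent $n_{\lambda+2u}$ yields the displayed expression.

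**Main obstacle.** The routine parts are the two determinant identities; the real content is the bijective/combinatorial step in the second paragraph — verifying carefully that the half-diagram basis decomposes as (colour distribution) $\times$ (tuple of monochrome link states) and that under this identification the contravariant form on $\Delta_n(\lambda)$ restricted to a single $(u,\text{distribution})$ block is \emph{exactly} the tensor product of the Temperley--Lieb Gram forms (not merely similar to it). The subtlety is that the product formula \eqref{eq5:InnerProd} must be applied with the correct matching of colours and the correct reading of which $(n_j,p_j)$-link state each multi-colour link state determines; this is where the isomorphism of \cite{JM} does the heavy lifting, and I would make sure the ordering of the global basis is chosen so that no cross terms between distinct distributions or distinct $u$'s appear.
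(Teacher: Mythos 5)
Your argument is correct and is essentially the one the paper intends: the block decomposition by colour distribution and by $u$, the identification of each block with a Kronecker product via \eqref{eq5:InnerProd} and the localisation isomorphism, and the standard identity $\det(X\otimes Y)=(\det X)^{\dim Y}(\det Y)^{\dim X}$ are exactly the ingredients the paper assembles in the discussion immediately preceding the theorem (the proof itself being delegated to \cite{JM}). No gaps; your bookkeeping of the exponents matches the stated determinant formula.
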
 \qed

If $ \sum_{ j }  \lambda_j =n $, from the last theorem we have $ \Gr_n( \lambda  )=   \bigoplus^{n_{ \lambda} } (1) = I_{n_{\lambda} \times n_{\lambda} } $, where $ I_{n_{\lambda} \times n_{ \lambda } } $ is the identity matrix, so the module $ \Delta_n  ( \lambda  )$ is simple whenever $ \sum_{ j }  \lambda_j =n $. Also, if $  \delta_j=q_j+q_j^{-1} \neq 0 $ for all  $ j \in \mathbb{Z}_m $ and $ q_j $ is not a root of unity for any $ j $, then the algebra $ \TM ( \breve{\delta} ) $ is semi-simple.

\begin{proposition}
Let $ \lambda \in \Lambda^0 $. The head of the module $ \Delta_n( \lambda ) $ where $ \lambda \in \Gamma_{(n-2v,m) } $ for some $ v $, denoted by $ \Lcell_n ( \lambda ) $, satisfy the relation
\begin{align} \label{eq5:002}
\dim \Lcell_n ( \lambda ) =&  \sum\limits_{ u \in \Gamma_{(v,m)} } n_{ \lambda+2u } \prod\limits_{i=0}^{m-1}  \dim \Lcell_{ \lambda_i+2u_i,u_i, \delta_i  } , 
\end{align} 
where $ \Lcell_{ \lambda_i+2u_i,u_i , \delta_i } $ is the head of the $ \mathsf{TL}_{\lambda_i+2u_i}( \delta_i) $-module $ \V_{ \lambda_i+2u_i,u_i } $.
\end{proposition}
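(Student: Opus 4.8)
The plan is to identify $\dim \Lcell_n(\lambda)$ with the rank of the Gram matrix $\Gr_n(\lambda)$ and then to read that rank off the block--tensor decomposition supplied by Theorem~\ref{thm5:77}.

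\emph{Step 1 (passing to the Gram matrix).} First I would observe that, since $\lambda \in \Lambda^0$, the contravariant form $\langle\,,\,\rangle$ on the cell module $\Delta_n(\lambda)$ is not identically zero. By the general theory of cellular algebras this forces the radical $\Rad_n(\lambda)$ of this form to coincide with the Jacobson radical of $\Delta_n(\lambda)$, so that $\Lcell_n(\lambda) = \Delta_n(\lambda)/\Rad_n(\lambda)$ is simple and is genuinely the head of $\Delta_n(\lambda)$. Working over $\F$, the dimension of the radical of a bilinear form equals the corank of any matrix representing it, hence
\[ \dim \Lcell_n(\lambda) = \dim \Delta_n(\lambda) - \dim \Rad_n(\lambda) = \rank \Gr_n(\lambda). \]

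\emph{Step 2 (decomposing the rank).} Next I would invoke Theorem~\ref{thm5:77}, which writes
\[ \Gr_n(\lambda) = \bigoplus_{u \in \Gamma_{(v,m)}} \ \bigoplus^{\, n_{\lambda+2u}} \Gr_{\lambda_0+2u_0,u_0,\delta_0} \otimes \cdots \otimes \Gr_{\lambda_{m-1}+2u_{m-1},u_{m-1},\delta_{m-1}}, \]
together with two elementary facts over the field $\F$: the rank of a block-diagonal matrix is the sum of the ranks of its blocks (so a direct sum of $n_{\lambda+2u}$ identical copies contributes $n_{\lambda+2u}$ times the rank of one copy), and the rank of a Kronecker product is the product of the ranks, $\rank(A \otimes B) = \rank(A)\,\rank(B)$. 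Combining these gives
\[ \rank \Gr_n(\lambda) = \sum_{u \in \Gamma_{(v,m)}} n_{\lambda+2u} \prod_{i=0}^{m-1} \rank \Gr_{\lambda_i+2u_i,u_i,\delta_i}. \]

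\emph{Step 3 (the colourwise factors and conclusion).} Finally, for each $i$ the matrix $\Gr_{\lambda_i+2u_i,u_i,\delta_i}$ represents the standard bilinear form on the Temperley--Lieb cell module $\V_{\lambda_i+2u_i,u_i}$ of $\mathsf{TL}_{\lambda_i+2u_i}(\delta_i)$, so its rank is $\dim \V_{\lambda_i+2u_i,u_i} - \dim \RV_{\lambda_i+2u_i,u_i,\delta_i} = \dim \Lcell_{\lambda_i+2u_i,u_i,\delta_i}$ straight from the definition $\Lcell = \V/\RV$; this also covers the degenerate case where the form vanishes, both sides then being $0$. Substituting into the display of Step~2 yields exactly~\eqref{eq5:002}. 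The part that is not purely mechanical is Step~1 --- certifying that the head of $\Delta_n(\lambda)$ equals the quotient by the radical of the contravariant form --- but this is precisely what cellularity, together with $\lambda \in \Lambda^0$, delivers, so no new argument is needed.
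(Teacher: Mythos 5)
Your proposal is correct and follows essentially the same route as the paper: the paper's proof also identifies $\dim \Lcell_n(\lambda)$ with $\rank \Gr_n(\lambda)$ over the field and then applies Theorem \ref{thm5:77} together with the additivity of rank over direct sums and its multiplicativity over Kronecker products. You have merely spelled out the rank computations and the colourwise identification $\rank \Gr_{\lambda_i+2u_i,u_i,\delta_i} = \dim \Lcell_{\lambda_i+2u_i,u_i,\delta_i}$ that the paper leaves implicit.
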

\begin{proof}
This follows from the fact that $ \dim \Lcell_n ( \lambda ) = \rank ( \Gr_n( \lambda) )  $ since our algebra is over a field and by using Theorem \ref{thm5:77} and properties of the rank of matrices.
\end{proof}

\begin{corollary}
Let $ \lambda \in \Lambda^0 $. The module  $ \Lcell_n( \lambda  )  $ decomposes as
\begin{align*}
  \bigoplus_{ u \in \Gamma_{(v,m)} } \bigoplus^{n_{ \lambda+2u }}  \Lcell_{ \lambda_0+2u_0,u_0 , \delta_0 }  \otimes \cdots \otimes  \Lcell_{ \lambda_{m-1} +2u_{m-1},u_{m-1} , \delta_{m-1} }  , 
\end{align*}
as a vector space, where $ \lambda \in \Gamma_{(n-2v,m)} $ for some $v $.
\end{corollary}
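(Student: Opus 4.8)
The plan is to read off the decomposition directly from the Gram matrix structure established in Theorem~\ref{thm5:77}, together with the dimension count of the preceding proposition. Since we work over a field, the head $\Lcell_n(\lambda) = \Delta_n(\lambda)/\Rad_n(\lambda)$ is, as a vector space, isomorphic to the image of the Gram operator $\Gr_n(\lambda)$ acting on $\Delta_n(\lambda)$; more precisely, choosing the half-diagram basis in which $\Gr_n(\lambda)$ is block-diagonal as in Theorem~\ref{thm5:77}, the radical of the form is spanned by the kernels of the individual blocks, and the quotient is spanned by (a transversal of) their images. So the first step is to fix notation: for each $u \in \Gamma_{(v,m)}$ and each of the $n_{\lambda+2u}$ copies indexed by the choice of colour-distribution, we have a block $\Gr_{\lambda_0+2u_0,u_0,\delta_0}\otimes\cdots\otimes\Gr_{\lambda_{m-1}+2u_{m-1},u_{m-1},\delta_{m-1}}$ acting on the corresponding tensor-product summand $\bigotimes_i \V_{\lambda_i+2u_i,u_i}$ of $\Delta_n(\lambda)$.

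Next I would use the elementary fact that the image of a tensor product of linear maps is the tensor product of the images: $\im(\Gr_{n_0,u_0,\delta_0}\otimes\cdots\otimes\Gr_{n_{m-1},u_{m-1},\delta_{m-1}}) = \im(\Gr_{n_0,u_0,\delta_0})\otimes\cdots\otimes\im(\Gr_{n_{m-1},u_{m-1},\delta_{m-1}})$, and that for each factor $\im(\Gr_{\lambda_i+2u_i,u_i,\delta_i})$ is, as a vector space, precisely $\Lcell_{\lambda_i+2u_i,u_i,\delta_i}$ (this is the rank-equals-dimension-of-head statement used in the previous proof, applied to the Temperley--Lieb algebra $\mathsf{TL}_{\lambda_i+2u_i}(\delta_i)$). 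Summing over $u$ and over the $n_{\lambda+2u}$ copies then gives $\Lcell_n(\lambda) \cong \bigoplus_{u\in\Gamma_{(v,m)}}\bigoplus^{n_{\lambda+2u}} \Lcell_{\lambda_0+2u_0,u_0,\delta_0}\otimes\cdots\otimes\Lcell_{\lambda_{m-1}+2u_{m-1},u_{m-1},\delta_{m-1}}$ as vector spaces. As a consistency check, taking dimensions on both sides recovers exactly equation~\eqref{eq5:002}, so nothing new needs to be verified there.

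One point that needs a word of care is the hypothesis $\lambda\in\Lambda^0$: this guarantees that the inner product on $\Delta_n(\lambda)$ is not identically zero, so that $\Rad_n(\lambda)$ is genuinely the radical of a nonzero form and $\Lcell_n(\lambda)$ is the actual head of the cell module (rather than the whole module being annihilated). Under this hypothesis, each factor form $\langle\;,\;\rangle_{\lambda_i+2u_i,u_i,\delta_i}$ occurring in a block with $\Gr_{\lambda_i+2u_i,u_i,\delta_i}$ of positive size behaves as expected, and the identification of $\im(\Gr)$ with the head is legitimate. I would state this identification once, cite the discussion preceding Theorem~\ref{thm5:77} for the block decomposition and the discussion of $\Lambda^0$ for the non-degeneracy, and then the corollary is immediate.

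The main obstacle — really the only subtlety — is bookkeeping: making sure the basis of $\Delta_n(\lambda)$ in which $\Gr_n(\lambda)$ is block-diagonal is exactly the one obtained by grouping the $\big((\lambda_j+2p_j,p_j)\big)_{j}$-link states by their colour-distribution and then by the underlying tuple of monochrome link states, so that the tensor-factor structure of each block matches the tensor-factor structure of the corresponding summand of $\Delta_n(\lambda)$ under the isomorphism $1_{\muu}\Delta_n(\lambda)\cong\bigotimes_j\V_{\mu_j,t_j}$ recalled above. Once that identification is in place — and it is exactly what equation~\eqref{eq5:InnerProd} and Theorem~\ref{thm5:77} encode — the corollary follows with no further computation.
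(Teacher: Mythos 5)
Your proposal is correct and, despite the extra dressing in terms of images of Gram operators, it is essentially the paper's argument: the paper simply observes that both sides have the same dimension by equation \eqref{eq5:002} (itself obtained from Theorem \ref{thm5:77} via $\dim \Lcell_n(\lambda)=\rank(\Gr_n(\lambda))$ and rank-multiplicativity of tensor products) and that vector spaces of equal dimension are isomorphic. Your identification $\im(A\otimes B)=\im(A)\otimes\im(B)$ is just the structural form of that rank computation, so for the statement as given --- an isomorphism of vector spaces only --- nothing beyond the dimension count is needed.
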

\begin{proof}
It comes directly from the fact that any two vector spaces have the same dimension they are isomorphic.
\end{proof}

\begin{lemma}  \label{thm5:dimRad} 
The dimensions of $ \Rad ( \Delta_{n}( \lambda_0 , \lambda_1 )) $, the radical of $ \Delta_{n}( \lambda_0 , \lambda_1 ) $, is
\begin{align}
   \sum\limits_{ u \in \Gamma_{(v,2)} } n_{ \lambda+2u } \bigg(   \dim   \RV_{ \lambda_0+2u_0,u_0 , \delta_0 } \dim \V_{ \lambda_1 +2u_1,u_1 } + \dim \V_{ \lambda_0+2u_0,u_0 }  \dim  \RV_{ \lambda_{1} +2u_1,u_1 , \delta_1 }   \nonumber \\
\;\;\;\;  -  \dim  \RV_{ \lambda_0+2u_0,u_0 , \delta_0 }  \dim \RV_{ \lambda_1 +2u_1,u_1 , \delta_1  } \big), \nonumber
\end{align} 
where $ \lambda \in \Gamma_{(n-2v,2)} $ and $ \RV_{ \lambda_i+2u_i,u_i , \delta_i } $ is the radical of the $ \mathsf{TL}_{\lambda_i+2u_i} ( \delta_i )$-module $ \V_{ \lambda_i+2u_i,u_i } $.
\end{lemma}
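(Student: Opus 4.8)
The plan is to read off $\dim\Rad(\Delta_n(\lambda_0,\lambda_1))$ from the rank of its Gram matrix, which Theorem~\ref{thm5:77} describes explicitly when $m=2$. Since $\TN(\breve\delta)$ is an algebra over a field and $\Rad(\Delta_n(\lambda))$ is by definition the radical of the cell form, it equals the kernel of $\Gr_n(\lambda)$, so
\[ \dim\Rad(\Delta_n(\lambda))\;=\;\dim\Delta_n(\lambda)\;-\;\rank\Gr_n(\lambda), \]
where $\dim\Delta_n(\lambda)$ is the number of top half-diagrams in $\tm^{|\rangle}[\lambda]$ (a basis by Lemma~\ref{lem5:2}), equivalently the size of the square matrix $\Gr_n(\lambda)$, namely $\sum_{u\in\Gamma_{(v,2)}}n_{\lambda+2u}\,\mathsf{d}_{\lambda_0+2u_0,u_0}\,\mathsf{d}_{\lambda_1+2u_1,u_1}$ for $\lambda\in\Gamma_{(n-2v,2)}$.

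Next I would substitute the block decomposition of Theorem~\ref{thm5:77} in the case $m=2$, namely $\Gr_n(\lambda)=\bigoplus_{u\in\Gamma_{(v,2)}}\bigoplus^{\,n_{\lambda+2u}}\Gr_{\lambda_0+2u_0,u_0,\delta_0}\otimes\Gr_{\lambda_1+2u_1,u_1,\delta_1}$, and invoke two elementary facts about matrices: rank is additive over direct sums and multiplicative over Kronecker products. This yields $\rank\Gr_n(\lambda)=\sum_{u}n_{\lambda+2u}\,\rank(\Gr_{\lambda_0+2u_0,u_0,\delta_0})\,\rank(\Gr_{\lambda_1+2u_1,u_1,\delta_1})$. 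For each colour $i$, the matrix $\Gr_{\lambda_i+2u_i,u_i,\delta_i}$ represents the standard bilinear form on $\V_{\lambda_i+2u_i,u_i}$ whose radical is $\RV_{\lambda_i+2u_i,u_i,\delta_i}$, hence $\rank\Gr_{\lambda_i+2u_i,u_i,\delta_i}=\dim\V_{\lambda_i+2u_i,u_i}-\dim\RV_{\lambda_i+2u_i,u_i,\delta_i}$.

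Finally it remains to expand termwise. Writing $\mathsf{d}_i=\dim\V_{\lambda_i+2u_i,u_i}$ and $r_i=\dim\RV_{\lambda_i+2u_i,u_i,\delta_i}$, the contribution of a fixed $u$ to $\dim\Delta_n(\lambda)-\rank\Gr_n(\lambda)$ is $n_{\lambda+2u}\bigl(\mathsf{d}_0\mathsf{d}_1-(\mathsf{d}_0-r_0)(\mathsf{d}_1-r_1)\bigr)=n_{\lambda+2u}\bigl(r_0\mathsf{d}_1+\mathsf{d}_0 r_1-r_0 r_1\bigr)$, and summing over $u\in\Gamma_{(v,2)}$ produces exactly the asserted formula. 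There is no real obstacle here; the only point to verify with some care is that the identity persists in the degenerate situation where the cell form on $\Delta_n(\lambda)$ is identically zero (which, by the Kronecker-product structure, forces $r_i=\mathsf{d}_i$ in every block, so that the bracket collapses to $\mathsf{d}_0\mathsf{d}_1$, consistently with $\Rad(\Delta_n(\lambda))=\Delta_n(\lambda)$), together with the routine bookkeeping of the index set $\Gamma_{(v,2)}$ and the multinomial weights $n_{\lambda+2u}$.
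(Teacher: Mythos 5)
Your proposal is correct and follows essentially the same route as the paper: the paper computes $\dim\Rad(\Delta_n(\lambda))=\dim\Delta_n(\lambda)-\dim\Lcell_n(\lambda)$ and quotes the preceding proposition (itself proved via $\dim\Lcell_n(\lambda)=\rank\Gr_n(\lambda)$, Theorem~\ref{thm5:77}, and rank properties), whereas you simply inline that step by working with the rank of the Gram matrix directly before performing the same inclusion--exclusion expansion. No gap; your remark about the degenerate case where the form vanishes is a harmless extra check.
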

\begin{proof}
Since $ \dim \Rad ( \Delta_n( \lambda )  ) = \dim \Delta_n( \lambda )  - \dim \Lcell_n ( \lambda ) $, so $ \dim \Rad ( \Delta_n( \lambda )  ) $ equals 
\[ \sum\limits_{ u \in \Gamma_{(v,2)} } n_{ \lambda+2u } \big( \dim \V_{ \lambda_0+2u_0,u_0 }  \dim \V_{ \lambda_1 +2u_1,u_1 }   -    \dim \Lcell_{ \lambda_0+2u_0,u_0 , \delta_0 } \dim \Lcell_{ \lambda_1+2u_1,u_1 , \delta_1  } \big) . \]
But $ \dim \Lcell_{ n,p , \delta  } = \dim \V_{n,p} - \dim  \RV_{n,p , \delta } $, so $ \dim \Rad ( \Delta_n( \lambda )  ) $ is 
\begin{multline*}
  \sum\limits_{ u \in \Gamma_{(v,2)} } n_{ \lambda+2u } \bigg(   \dim   \RV_{ \lambda_0+2u_0,u_0 , \delta_0 } \dim \V_{ \lambda_1 +2u_1,u_1 } + \dim \V_{ \lambda_0+2u_0,u_0 }  \dim  \RV_{ \lambda_1 +2u_1,u_1 , \delta_1 }   \\
 -  \dim  \RV_{ \lambda_0+2u_0,u_0, \delta_0 }  \dim \RV_{ \lambda_1 +2u_1 ,u_1 , \delta_1 } \big).  \qedhere  
\end{multline*} 
\end{proof}

\begin{theorem} \label{thm5:rad}
Let $ \lambda \in \Gamma_{(n-2v,2)} $ for some $ v $.  Then $ \Rad ( \Delta_n( \lambda_0, \lambda_1 )  )  $ decomposes as
\begin{align*}
  \bigoplus\limits_{ u \in \Gamma_{(v,2)} } \bigoplus^{n_{ \lambda+2u }} \big( \RV_{ \lambda_0+2u_0,u_0 , \delta_0 }  \otimes  \V_{ \lambda_1 +2u_1,u_1 } +  \V_{ \lambda_0+2u_0,u_0 }  \otimes  \RV_{ \lambda_1 +2u_1,u_1 , \delta_1 }  \big) , 
\end{align*}
as a vector space, and it is equal to 
\begin{align*} 
  \sum\limits_{ u \in \Gamma_{(v,2)} } \;  \sum\limits_{ \sigma  \in \widehat{\Sym}_{n,2} }    \sigma  \big( \,   \RV_{ \lambda_0+2u_0,u_0 , \delta_0 }  \otimes  \V_{ \lambda_1 +2u_1,u_1 }   +  \V_{ \lambda_0+2u_0,u_0 }  \otimes  \RV_{ \lambda_1 +2u_1,u_1 , \delta_1 }  \, \big) .
\end{align*}
 Remember by $ \RV_{ \lambda_0+2u_0,u_0 , \delta_0 }  \otimes  \V_{ \lambda_1 +2u_1,u_1 }  $ and $  \V_{ \lambda_0+2u_0,u_0 }  \otimes  \RV_{ \lambda_1 +2u_1,u_1 , \delta_1 }  $ we mean their images in the module $ 1_{  \underline{\lambda+2u} } \Delta_n ( \lambda) $.
\end{theorem}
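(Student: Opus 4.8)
The plan is to identify $\Rad(\Delta_n(\lambda_0,\lambda_1))$ as the radical of the bilinear form $\langle\;,\;\rangle$ on $\Delta_n(\lambda_0,\lambda_1)$ and then exploit the block decomposition of the Gram matrix from Theorem \ref{thm5:77}. First I would fix the decomposition of the underlying vector space: from the discussion preceding the theorem, a basis of $\Delta_n(\lambda)$ is the set of all $\big((\lambda_j+2p_j,p_j)\big)_{j\in\Z_2}$-link states, and grouping these by the pair $u=(p_0,p_1)\in\Gamma_{(v,2)}$ together with the colour distribution $\Top$, one gets $\Delta_n(\lambda)=\bigoplus_{u}\bigoplus^{n_{\lambda+2u}}\big(\V_{\lambda_0+2u_0,u_0}\otimes\V_{\lambda_1+2u_1,u_1}\big)$ as a vector space, where each summand is the image of the tensor product under the idempotent-localisation isomorphism recalled in Section 6.

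Next I would invoke Equation \eqref{eq5:InnerProd}: on a single $\big((n_j,p_j)\big)_{j\in\Z_2}$-block with fixed colour distribution, the form factorises as $\langle a,b\rangle=\langle a_0,b_0\rangle_{n_0,p_0,\delta_0}\cdot\langle a_1,b_1\rangle_{n_1,p_1,\delta_1}$, and the observation immediately after that equation shows the value is independent of which colour distribution one picks (as long as it matches), so different $\widehat{\Sym}_{n,2}$-translates of the same block are mutually orthogonal and carry identical forms. Hence the radical of $\langle\;,\;\rangle$ on the whole module is the direct sum over $u$ and over colour distributions of the radical of the tensor-product form $\langle\;,\;\rangle_{\delta_0}\otimes\langle\;,\;\rangle_{\delta_1}$ on $\V_{\lambda_0+2u_0,u_0}\otimes\V_{\lambda_1+2u_1,u_1}$. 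The key linear-algebra fact I would then use is that for two bilinear forms the radical of the tensor product form on $W_1\otimes W_2$ equals $\Rad(W_1)\otimes W_2+W_1\otimes\Rad(W_2)$; this can be checked directly by choosing bases adapted to the radicals (or via $\Gr_1\otimes\Gr_2$ and $\ker(\Gr_1\otimes\Gr_2)=\ker\Gr_1\otimes W_2+W_1\otimes\ker\Gr_2$). Combining, $\Rad(\Delta_n(\lambda))$ equals the asserted direct sum, and rewriting the direct-sum-over-colour-distributions as a sum of $\sigma$-translates with $\sigma\in\widehat{\Sym}_{n,2}$ gives the second displayed expression; a dimension count against Lemma \ref{thm5:dimRad} confirms nothing has been over- or under-counted, since $\dim(\RV_0\otimes\V_1+\V_0\otimes\RV_1)=\dim\RV_0\dim\V_1+\dim\V_0\dim\RV_1-\dim\RV_0\dim\RV_1$.

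The one genuine subtlety, and what I expect to be the main obstacle, is making precise the interplay between the vector-space direct sum and the module structure: the summands $\sigma(\RV_{\lambda_0+2u_0,u_0,\delta_0}\otimes\V_{\lambda_1+2u_1,u_1})$ are defined only as images in $1_{\underline{\lambda+2u}}\Delta_n(\lambda)$ under Jegan's isomorphism, and one must check that the $\TM$-action genuinely respects the claim that this sum is $\Rad(\Delta_n(\lambda))$ as a submodule, not merely as a subspace. Here I would lean on the cellularity statement: $\Rad(\Delta_n(\lambda))$ is automatically a submodule (it is the radical of the cell form), so it suffices to match it with the subspace just described, which is exactly what the Gram-matrix block decomposition delivers; the module structure then comes for free. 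The relation $\Gr_n(\lambda)=\bigoplus_u\bigoplus^{n_{\lambda+2u}}\Gr_{\lambda_0+2u_0,u_0,\delta_0}\otimes\Gr_{\lambda_1+2u_1,u_1,\delta_1}$ of Theorem \ref{thm5:77} is what pins down the radical block-by-block, so the whole argument is essentially a careful bookkeeping on top of that theorem together with the standard fact $\dim\RV_{n,p,\delta}=\dim\V_{n,p}-\dim\Lcell_{n,p,\delta}$ already used in Lemma \ref{thm5:dimRad}.
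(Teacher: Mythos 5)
Your proposal is correct and follows essentially the same route as the paper: both identify $\Rad(\Delta_n(\lambda))$ with the radical of the cell form, exploit the factorisation \eqref{eq5:InnerProd} (equivalently the block/tensor structure of $\Gr_n(\lambda)$ in Theorem \ref{thm5:77}) together with orthogonality of distinct colour distributions and arc-numbers, and close with the dimension count of Lemma \ref{thm5:dimRad}. The only difference is organisational: where the paper checks by hand that the sum of $\sigma$-translates pairs to zero with everything and then matches dimensions, you package the same computation as the standard identity $\ker(\Gr_1\otimes\Gr_2)=\ker\Gr_1\otimes W_2+W_1\otimes\ker\Gr_2$ applied block by block.
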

\begin{proof}
First part comes directly from last lemma, since they have the same dimension, note that 
$$ ( \RV_{ \lambda_0+2u_0,u_0 , \delta_0 }  \otimes  \V_{ \lambda_1 +2u_1 ,u_1 }  ) \cap  (  \V_{ \lambda_0+2u_0,u_0 }  \otimes  \RV_{ \lambda_1 +2u_1 ,u_1, \delta_1  } ) =  \RV_{ \lambda_0+2u_0,u_0 , \delta_0 }  \otimes  \RV_{ \lambda_1 +2u_1,u_1, \delta_1 }  .$$ 

Now we are going to prove the second part. As we mentioned before we have
\[  \Delta_n ( \lambda ) = \sum_{ u \in \Gamma_{(v ,2)} } \sum_{ \sigma  \in \widehat{\Sym}_{n,2} } \sigma \big( \,  \V_{ \lambda_0+2u_0,u_0 }  \otimes  \V_{ \lambda_1 +2u_1,u_1 }  \, \big) . \]
Let $ y $ be a $ (( \lambda_0+2u'_0,u'_0),( \lambda_1+2u'_1, u'_1)) $-link state for some $ u' \in \Gamma_{(v ,2)} $, so from the last equation we can assume that $ y= \pi ( y_0 \otimes y_1 ) $ for some $ \pi \in \widehat{\Sym}_{n,2} $ and $ y_i $ is a $ ( \lambda_i+2u'_i,u'_i) $-link state for each $ i $, and let $ x $ be an element in $ \sigma  \big( \,  \RV_{ \lambda_0+2u_0,u_0 , \delta_0 } \otimes \V_{ \lambda_1 +2u_1,u_1 }  \,  \big) $ or in  $ \sigma  \big( \,  \V_{ \lambda_0+2u_0,u_0 }  \otimes  \RV_{ \lambda_1 +2u_1,u_1 , \delta_1 } \, \big) $ for some $ u \in \Gamma_{(v,2)} $ and some $  \sigma  \in \widehat{\Sym}_{n,2} $, so we can assume that $ x= \sigma ( x_0 \otimes x_1) $ where $ x_0 \in \RV_{ \lambda_0+2u_0,u_0 , \delta_0 } $ or $ x_1 \in \RV_{ \lambda_1+2u_1,u_1 , \delta_1 } $. If $ u \neq u' $ or $ \sigma \neq \pi $, this means the colour distributions of $ x $ and $ y $ are different, so from the definition of the multiplication on the algebra $ \TM $ we have $ \langle y, x \rangle =0 $. On the other hand, if $ u = u' $ and $ \sigma = \pi $, from equation \eqref{eq5:InnerProd} we have $ \langle y, x \rangle = \langle y_0, x_0 \rangle_{ \lambda_0+2u_0, u_0, \delta_0}  \langle y_1 , x_1 \rangle_{ \lambda_1+2u_1, u_1, \delta_1} $. But $ x_i \in \RV_{ \lambda_i+2u_i,u_i , \delta_i } $ for some $ i $, so $ \langle y_i, x_i \rangle_{ \lambda_i+2u_i, u_i, \delta_i} =0 $ for some $ i $. Hence $ \langle y, x \rangle =0 $ for each $ y \in \Delta_n ( \lambda ) $, which means $ x \in \Rad ( \Delta_n( \lambda )  ) $. Thus 
\[   \sum\limits_{ u  } \;  \sum\limits_{ \sigma  }    \sigma  ( \,  \RV_{ \lambda_0+2u_0,u_0 , \delta_0 }  \otimes  \V_{ \lambda_1 +2u_1,u_1 } +  \V_{ \lambda_0+2u_0,u_0 }  \otimes  \RV_{ \lambda_1 +2u_1,u_1 , \delta_1 }  \, ) \subseteq  \Rad ( \Delta_n( \lambda )  ) , \]
but both of them have the same dimension thus they are identical. 
\end{proof}

\begin{theorem}  \label{thm3:rad7}
Let $ \lambda \in \Gamma_{(n-2v,m)} $ for some $ v $.  Then the radical $  \Rad ( \Delta_n( \lambda )  ) $ equals
\begin{align*} 
\sum\limits_{ u \in \Gamma_{(v,m)} } \;  \sum\limits_{ \sigma  \in \widehat{\Sym}_{n,m} }   \sigma  \Big( \, ( \RV_{ \lambda_0+2u_0,u_0 , \delta_0 }  \otimes  \V_{ \lambda_1 +2u_1,u_1 } \otimes \cdots \otimes \V_{ \lambda_{m-1} +2u_{m-1},u_{m-1} })+   \\
( \V_{ \lambda_0+2u_0,u_0  }  \otimes  \RV_{ \lambda_1 +2u_1,u_1, \delta_1 } \otimes  \V_{ \lambda_2 +2u_2,u_2 } \otimes \cdots \otimes \V_{ \lambda_{m-1} +2u_{m-1} ,u_{m-1} })  +\;\;\;\;  \\
 \cdots + (\V_{ \lambda_0+2u_0,u_0 }  \otimes \cdots \otimes  \V_{ \lambda_{m-2} +2u_{m-2},u_{m-2} } \otimes \RV_{ \lambda_{m-1} +2u_{m-1} ,u_{m-1} , \delta_{m-1} })  \, \Big) .
\end{align*}
 Remember by the tensor product of the modules in the last equation we mean their images in the module $ 1_{ \underline{\lambda+2u} } \Delta_n ( \lambda) $.
\end{theorem}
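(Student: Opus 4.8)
The plan is to mirror the proof of Theorem \ref{thm5:rad}, replacing the two-fold inclusion--exclusion there by the $m$-fold version, and to conclude by a dimension count together with the inner-product vanishing argument. First I would compute $\dim \Rad( \Delta_n( \lambda ) )$. Since $\TM$ is an algebra over a field, $\dim \Rad( \Delta_n( \lambda ) ) = \dim \Delta_n( \lambda ) - \rank \Gr_n( \lambda ) = \dim \Delta_n( \lambda ) - \dim \Lcell_n( \lambda )$. Substituting the basis decomposition $\Delta_n( \lambda ) = \sum_{ u \in \Gamma_{(v,m)} } \sum_{ \sigma \in \widehat{\Sym}_{n,m} } \sigma \big( \bigotimes_{i} \V_{ \lambda_i + 2u_i, u_i } \big)$ together with formula \eqref{eq5:002} for $\dim \Lcell_n( \lambda )$ gives
\[ \dim \Rad( \Delta_n( \lambda ) ) = \sum_{ u \in \Gamma_{(v,m)} } n_{ \lambda + 2u } \Big( \prod_{i=0}^{m-1} \dim \V_{ \lambda_i + 2u_i, u_i } - \prod_{i=0}^{m-1} \dim \Lcell_{ \lambda_i + 2u_i, u_i , \delta_i } \Big) . \]

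Next I would identify this number with the dimension of the claimed sum. The essential input is the $m$-fold inclusion--exclusion identity: for subspaces $W_i \subseteq V_i$ of finite-dimensional vector spaces, inside $\bigotimes_{i} V_i$ one has $\bigcap_{i \in S} ( V_0 \otimes \cdots \otimes W_i \otimes \cdots \otimes V_{m-1} ) = \bigotimes_{i \in S} W_i \otimes \bigotimes_{i \notin S} V_i$, so inclusion--exclusion collapses $\dim \big( \sum_{i} V_0 \otimes \cdots \otimes W_i \otimes \cdots \otimes V_{m-1} \big)$ to $\prod_{i} \dim V_i - \prod_{i} ( \dim V_i - \dim W_i )$. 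Applying this with $V_i = \V_{ \lambda_i + 2u_i, u_i }$ and $W_i = \RV_{ \lambda_i + 2u_i, u_i , \delta_i }$, and using $\dim \Lcell_{ n,p,\delta } = \dim \V_{n,p} - \dim \RV_{ n,p,\delta }$, shows that each $\bigoplus^{ n_{ \lambda+2u } } \big( \sum_i \cdots \big)$-block of the claimed sum has dimension $n_{ \lambda + 2u } \big( \prod_i \dim \V_{ \lambda_i + 2u_i, u_i } - \prod_i \dim \Lcell_{ \lambda_i + 2u_i, u_i , \delta_i } \big)$. Summing over $u$, and using that the images $\sigma ( \bigotimes_i \V_{ \lambda_i+2u_i,u_i } )$ for distinct pairs $( u, \sigma )$ are in direct-sum position inside $\Delta_n( \lambda )$ — each such subspace being spanned by half-diagrams carrying the fixed colour distribution recorded by $\sigma$ and the fixed colour-wise arc count recorded by $u$ — recovers exactly the value of $\dim \Rad( \Delta_n( \lambda ) )$ displayed above.

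It then suffices to show the claimed sum is contained in $\Rad( \Delta_n( \lambda ) )$; equality follows since the dimensions agree. A spanning vector is $x = \sigma ( x_0 \otimes \cdots \otimes x_{m-1} )$ for some $u \in \Gamma_{(v,m)}$ and $\sigma \in \widehat{\Sym}_{n,m}$, where $x_i \in \RV_{ \lambda_i + 2u_i, u_i , \delta_i }$ for at least one index $i$ and $x_k \in \V_{ \lambda_k + 2u_k, u_k }$ for the remaining $k$. Let $y$ be an arbitrary basis half-diagram of $\Delta_n( \lambda )$ and write $y = \pi ( y_0 \otimes \cdots \otimes y_{m-1} )$ with $\pi \in \widehat{\Sym}_{n,m}$, $u' \in \Gamma_{(v,m)}$, and each $y_j$ a $( \lambda_j + 2u'_j, u'_j )$-link state. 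If $( \pi, u' ) \neq ( \sigma, u )$ then the colour distributions of $x$ and $y$ differ, so $\langle y, x \rangle = 0$ straight from the definition of multiplication on $\TM$. If $( \pi, u' ) = ( \sigma, u )$, then \eqref{eq5:InnerProd} gives $\langle y, x \rangle = \prod_{j} \langle y_j, x_j \rangle_{ \lambda_j + 2u_j, u_j, \delta_j }$, and the $i$-th factor vanishes because $x_i \in \RV_{ \lambda_i + 2u_i, u_i , \delta_i }$. Hence $\langle y, x \rangle = 0$ for every basis $y$, so $x \in \Rad( \Delta_n( \lambda ) )$, and the containment follows by linearity.

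I expect the only delicate point to be the dimension bookkeeping, specifically checking that the $m$-fold inclusion--exclusion genuinely telescopes to $\prod_i \dim \V - \prod_i \dim \Lcell$ — which requires all the intersections of the tensor summands to be the ``expected'' tensor products; this is a standard tensor-product fact, and it transfers verbatim to the images under the module isomorphism $1_{ \underline{\lambda+2u} } \Delta_n( \lambda ) \cong \bigotimes_i \V_{ \lambda_i + 2u_i, u_i }$ — and that the $\sigma$-blocks are independent so that dimensions add. The inner-product vanishing step is a direct transcription of the $m = 2$ argument in Theorem \ref{thm5:rad} and poses no new difficulty.
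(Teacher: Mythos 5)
Your proposal is correct, but it follows a genuinely different route from the paper's: the paper proves this theorem in one line, by induction on $m$ with Theorem \ref{thm5:rad} as the base case, whereas you bypass the induction entirely and generalize the \emph{proof} of Theorem \ref{thm5:rad} directly to $m$ factors. The two arguments rest on the same two pillars --- a dimension count matching $\dim \Delta_n(\lambda) - \dim \Lcell_n(\lambda)$ against the candidate subspace, and the factorization \eqref{eq5:InnerProd} of the inner product to get the containment in the radical --- but your version has a real advantage: the paper's inductive step is never made explicit and would require realizing the radical of an $m$-colour cell module in terms of an $(m-1)$-colour one, which is awkward because the colour distributions of the $m$-th colour interleave with the rest; your direct argument avoids setting that up and is the more honest write-up. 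One point deserves more care in your step (ii): inclusion--exclusion for dimensions of sums of \emph{more than two} subspaces is false in general, even when all pairwise and higher intersections are as expected. It is valid here for a stronger reason, which is also the cleanest justification: choose a basis of each $\V_{\lambda_i+2u_i,u_i}$ extending a basis of $\RV_{\lambda_i+2u_i,u_i,\delta_i}$; then every summand $\V \otimes \cdots \otimes \RV \otimes \cdots \otimes \V$ is spanned by a subset of the resulting product basis of pure tensors, so the dimension of the sum is just the number of pure tensors having at least one factor inside an $\RV$-basis, namely $\prod_i \dim \V_{\lambda_i+2u_i,u_i} - \prod_i \dim \Lcell_{\lambda_i+2u_i,u_i,\delta_i}$. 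You gesture at this with ``the intersections are the expected tensor products,'' but you should state the common-basis argument explicitly. The remaining ingredients --- that the blocks indexed by distinct colour distributions are in direct-sum position because they are spanned by disjoint sets of basis half-diagrams, and that the radical of the form computes the radical of the cell module over a field --- are used exactly as in the paper's $m=2$ proof and transfer without change.
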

\begin{proof}
We can show that by using induction on $ m $ and Theorem \ref{thm5:rad}.
\end{proof}

\begin{corollary} 
Let $ \lambda \in \Gamma_{(n-2v,m)} $, then 
\begin{align*}
\Lcell_n( \lambda ) =     \sum\limits_{ u \in \Gamma_{(v,m)} } \;  \sum\limits_{ \sigma  \in \widehat{\Sym}_{n,m} }    \sigma  \big( \,   \Lcell_{ \lambda_0+2u_0,u_0 , \delta_0 }  \otimes \cdots \otimes \Lcell_{ \lambda_{m-1} +2u_{m-1},u_{m-1} , \delta_{m-1} }  \, \big) .
\end{align*}
By $ \otimes_{i=0}^{m-1} \Lcell_{ \lambda_i+2u_i,u_i , \delta_i } $ we mean its images in the module $ 1_{ \underline{\lambda+2u} } \Delta_n ( \lambda) $.
\end{corollary}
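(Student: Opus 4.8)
The plan is to read off the decomposition of $\Lcell_n(\lambda)$ as a straightforward consequence of Theorem~\ref{thm3:rad7}, essentially by ``subtracting'' the radical from the whole cell module inside each idempotent block. First I would recall the decomposition
\[ \Delta_n(\lambda) = \sum_{u \in \Gamma_{(v,m)}} \; \sum_{\sigma \in \widehat{\Sym}_{n,m}} \sigma\big( \V_{\lambda_0+2u_0,u_0} \otimes \cdots \otimes \V_{\lambda_{m-1}+2u_{m-1},u_{m-1}} \big), \]
already established in the text, together with the fact that the summands indexed by distinct pairs $(u,\sigma)$ have disjoint colour distributions and hence meet only in zero; so this sum is in fact a direct sum of vector spaces. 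Since $\Lcell_n(\lambda) = \Delta_n(\lambda)/\Rad(\Delta_n(\lambda))$ as a vector space, and $\Rad(\Delta_n(\lambda))$ is given by Theorem~\ref{thm3:rad7} as the corresponding $(u,\sigma)$-indexed sum of the subspaces
\[ R_{u,\sigma} := \sigma\Big( \sum_{i=0}^{m-1} \V_{\lambda_0+2u_0,u_0} \otimes \cdots \otimes \RV_{\lambda_i+2u_i,u_i,\delta_i} \otimes \cdots \otimes \V_{\lambda_{m-1}+2u_{m-1},u_{m-1}} \Big), \]
the quotient splits blockwise: $\Lcell_n(\lambda) \cong \bigoplus_{u,\sigma} \sigma\big( \bigotimes_i \V_{\lambda_i+2u_i,u_i} \big) \big/ R_{u,\sigma}$.

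The one genuine computation is then purely a fact about the Temperley--Lieb factors: for vector spaces (modules) $V_0,\dots,V_{m-1}$ with submodules $W_i \subseteq V_i$, one has
\[ \big( V_0 \otimes \cdots \otimes V_{m-1} \big) \Big/ \Big( \sum_{i=0}^{m-1} V_0 \otimes \cdots \otimes W_i \otimes \cdots \otimes V_{m-1} \Big) \;\cong\; (V_0/W_0) \otimes \cdots \otimes (V_{m-1}/W_{m-1}). \]
This is standard right-exactness of the tensor product (or can be checked directly by choosing complements $V_i = W_i \oplus U_i$ and noting the sum of the $W_i$-layers is exactly the complement of $U_0 \otimes \cdots \otimes U_{m-1}$). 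Applying it with $V_i = \V_{\lambda_i+2u_i,u_i}$ and $W_i = \RV_{\lambda_i+2u_i,u_i,\delta_i}$, and using $\Lcell_{n_i,p_i,\delta_i} = \V_{n_i,p_i}/\RV_{n_i,p_i,\delta_i}$ together with the fact that $\sigma$ is an isomorphism onto its image, gives $\sigma\big(\bigotimes_i \V_{\lambda_i+2u_i,u_i}\big)/R_{u,\sigma} \cong \sigma\big(\bigotimes_i \Lcell_{\lambda_i+2u_i,u_i,\delta_i}\big)$ for each $(u,\sigma)$. Summing over $u$ and $\sigma$ yields the claimed formula; here again the image-convention of the text means each tensor product is understood inside $1_{\underline{\lambda+2u}}\Delta_n(\lambda)$.

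I do not expect a serious obstacle: the only point requiring a moment's care is the bookkeeping that the sum in Theorem~\ref{thm3:rad7} genuinely restricts, block by block, to the sum of $W_i$-layers inside a single $\sigma\big(\bigotimes_i \V_{\lambda_i+2u_i,u_i}\big)$ — i.e.\ that there is no ``cross-block'' interaction — but this is exactly the disjointness-of-colour-distributions observation used already in the proof of Theorem~\ref{thm5:rad}, so it can simply be cited. As the corollary only asserts an isomorphism of vector spaces (equivalently, an equality of dimensions), I would not need to track module structure, and the proof reduces to the dimension identity of Equation~\eqref{eq5:002} combined with the decomposition of $\Rad(\Delta_n(\lambda))$; indeed a one-line alternative is to observe both sides have the same dimension by the earlier displayed formula for $\dim \Lcell_n(\lambda)$, which is the route the paper seems to favour.
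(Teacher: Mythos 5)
Your proposal is correct and follows essentially the same route as the paper: quotient $\Delta_n(\lambda)$ by the radical decomposition of Theorem~\ref{thm3:rad7}, split the sum block-by-block over the pairs $(u,\sigma)$, and then apply the standard identity $\bigl(\bigotimes_i \V_i\bigr)/\bigl(\sum_i \V_0\otimes\cdots\otimes\RV_i\otimes\cdots\otimes\V_{m-1}\bigr)\cong\bigotimes_i(\V_i/\RV_i)$. The only small inaccuracy is your closing guess that the paper resolves this corollary by dimension count; that is how the earlier vector-space corollary is handled, whereas here the paper carries out the same coset/quotient computation you describe as the main argument.
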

\begin{proof}
From the definition of the head of cell module over a field and from the last theorem, we have $  \Lcell_n( \lambda )  $ equals
\begin{align*}
 \sum\limits_{ u \in \Gamma_{(v,m)} } \sum\limits_{ \sigma  \in \widehat{\Sm} }   \sigma \Big( \, \frac{  \V_{ 0 }  \otimes \cdots \otimes  \V_{  m-1 }   }{  \RV_{0 }  \otimes  \V_{ 1 } \otimes \cdots \otimes \V_{ m-1 } + \dots + \V_{ 0 }  \otimes \cdots \otimes  \V_{ m-2 } \otimes \RV_{ m-1 }  } \, \Big) ,
 \end{align*}
 where we put $ \RV_i:=  \RV_{ \lambda_i+2u_i,u_i , \delta_i }$ and $ \V_i:= \V_{ \lambda_i +2u_i ,u_i }  $ for simplicity.  Let $ x_i \in  \Lcell_{ \lambda_i+2u_i,u_i , \delta_i }  := L_i $ for each $ i $, so $ x_i = a_i + \RV_i $ for some $ a_i \in \V_i $ and from that we have
 \[ \otimes_{i=0}^{m-1} x_i = \otimes_{i=0}^{m-1} a_i + \RV_{0 }  \otimes  \V_{ 1 } \otimes \cdots \otimes \V_{ m-1 } + \dots + \V_{ 0 }  \otimes \cdots \otimes  \V_{ m-2 } \otimes \RV_{ m-1 }, \]
 it follows that 
\begin{align*}
\frac{  \V_{ 0 }  \otimes \cdots \otimes  \V_{  m-1 }   }{  \RV_{0 }  \otimes  \V_{ 1 } \otimes \cdots \otimes \V_{ m-1 } + \dots + \V_{ 0 }  \otimes \cdots \otimes  \V_{ m-2 } \otimes \RV_{ m-1 }  } =   \Lcell_{ 0 }  \otimes \dots \otimes  \Lcell_{ m-1 } ,
 \end{align*}
for each $ u \in \Gamma_{(v ,m)} $. We are done
\end{proof}

\section{Homomorphisms between cell $ \TM $-modules}

 \hspace{15pt}  As we said, the algebra $ \TM ( \breve{\delta} ) $ is semi-simple algebra when $ q_j $ is not root of unity where $ \delta_j =q_j+q_j^{-1} \neq 0 $ for each $ j \in \Z_m $. Therefore in what follows, it will be assumed that $ q_j $ is a root of unity for some $ j $, and let $ \mathbf{ l}_j $ be the minimal positive integer satisfying $ q_j^{2 \mathbf{ l}_j}= 1 $.

  The first part of next proposition is Lemma 4.1.1 in \cite{JM}. 

\begin{proposition} \label{thm5:ghom}
Let $ \lambda, \mu \in \Lambda_{ \TM } $ and $ \theta :  \Delta_n ( \lambda ) \rightarrow \Delta_n ( \mu ) $ be a homomorphism defined by $\theta ( a ) = \sum_{ i } \alpha_i  b_i $,  where $ \alpha_i \in \F $, $ a \in \tm^{ | \rangle }  [ \lambda ] $ and $ b_i  \in \tm^{ | \rangle }  [ \mu ]$ for each $ i $. Then the following are true:
\begin{enumerate}
\item $ \Top (a) = \Top (b_i )  $ for each $ i $.
\item $  \mu_j = \lambda_j -2t_j$, for some $ t_j \in \{ 0 , \dots , [^{ \lambda_j} /_2] \} $.
\item If $ \delta_j  $ is invertible and $ a $ contains an $ \Cc_j $-arc, then $ b_i $ contains an $ \Cc_j $-arc in the same position. This means that $ \theta $ preserves arcs when $ \delta_j  \neq  0 $ for each $ j \in \Z_m $.
\end{enumerate}
\end{proposition}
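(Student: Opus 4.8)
The plan is to exploit the tensor-product description of $\Delta_n(\lambda)$ coming from the idempotent localisation, together with the graphical description of the multiplication on $\TM$. Write $a=|D\rangle$ for some $D\in\mathcal T_{n,m}[\lambda]$, and recall that the half-diagram basis of $\Delta_n(\mu)$ consists of link states, so each $b_i$ is $|D_i\rangle$ for some $D_i\in\mathcal T_{n,m}[\mu]$. For part~(1), I would use the fact that $\langle\,\cdot\,,\,\cdot\,\rangle$ on $\Delta_n(\mu)$ is non-degenerate modulo its radical (equivalently, that a link state $b$ is detected inside $\Delta_n(\mu)$ by pairing against suitable elements of $\tm^{\langle|}[\mu]$), combined with the observation from equation~\eqref{eq5:InnerProd} and the preceding discussion that pairing two half-diagrams vanishes unless their colour distributions agree. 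Concretely: apply a bottom half-diagram $\langle E|$ with colour distribution $\Top(a)$ to the identity $\langle E|\,\theta(a)=\langle E|\sum_i\alpha_i b_i$; on the other hand $\theta$ is a module map, and acting by a diagram built from $\langle E|$ on $a$ produces something whose colour data is inherited from $\Top(a)$. Matching colour distributions forces $\Top(b_i)=\Top(a)$ whenever $\alpha_i\neq0$.

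For part~(2): since $\lambda,\mu\in\Lambda$ and the $\Cc_j$-propagating number cannot increase under multiplication, a nonzero homomorphism $\Delta_n(\lambda)\to\Delta_n(\mu)$ can only exist when $\mu_j\le\lambda_j$ for each $j$; parity within each colour is preserved because $\Delta_n(\lambda)$ and $\Delta_n(\mu)$ live in fixed classes $\Gamma_{(n-2v,m)}$ and $\Gamma_{(n-2v',m)}$ and an $\Cc_j$-arc consumes two $\Cc_j$-nodes, so $\lambda_j-\mu_j$ is even. This gives $\mu_j=\lambda_j-2t_j$ with $t_j\ge0$, and the bound $t_j\le[\lambda_j/2]$ is automatic since $\mu_j\ge0$. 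I would phrase this using the cellular order on $\Lambda$ from the cellularity theorem ($\lambda\ge\mu$ iff $\mu_j\le\lambda_j$ for all $j$), noting that a nonzero cell-module homomorphism forces comparability in that order.

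For part~(3): this is the real content and the expected main obstacle. Suppose $\delta_j$ is invertible and $a$ has an $\Cc_j$-arc joining two adjacent (after cabling) top nodes, say in position $P$. I would use the localisation: under the isomorphism $1_{\underline{\lambda+2u}}\,\Delta_n(\lambda)\cong\bigotimes_i\V_{\lambda_i+2u_i,u_i}$, the $\Cc_j$-component of $a$ lies in $\V_{n_j,p_j}$ with $p_j\ge1$, hence in the submodule $\M_{n_j,p_j}\subsetneq\M_{n_j,p_j-1}$. The standard argument for Temperley-Lieb (see Proposition~\ref{thm1:TLsemisimple} and Theorem~\ref{thm1:cellhom2}) shows that one can hit $a$ with an element $e$ of $\TM$ that, in the $\Cc_j$-strand, is the Temperley-Lieb cap-cup idempotent $\delta_j^{-1}$ times the diagram closing off the arc $P$ — and is the identity in every other colour; then $ea=a$ because $\delta_j$ is invertible. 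Applying $\theta$ and using that $\theta$ is a module map gives $ea=a\Rightarrow e\,\theta(a)=\theta(a)$, i.e. $\sum_i\alpha_i\,e b_i=\sum_i\alpha_i b_i$. Now $e b_i$ is computed colourwise; if $b_i$ did not contain the $\Cc_j$-arc at $P$, then acting by the cap in colour $\Cc_j$ either lowers the $\Cc_j$-propagating number (so $e b_i$ lands in $\F\M_n^{<}(\mu)$ and is zero in $\Delta_n(\mu)$) or produces a strictly different half-diagram; since $\Top(b_i)=\Top(a)$ by part~(1) the colour bookkeeping is under control, and comparing coefficients on the basis $\tm^{|\rangle}[\mu]$ forces $b_i$ to have the $\Cc_j$-arc at $P$. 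Iterating over all $\Cc_j$-arcs of $a$ and all colours $j$ with $\delta_j\neq0$ gives the claim, and the final sentence (that $\theta$ preserves arcs when all $\delta_j\neq0$) is the special case where every colour is invertible.

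The delicate point in part~(3) is justifying that the cap-cup operator $e$ really does act as the identity on $a$ and that its effect on a link state $b_i$ lacking the arc is to annihilate it or change it recognisably — this requires care because $e$ is only "locally" the Temperley-Lieb idempotent and one must check the colour distributions match up so that the multiplication on $\TM$ does not produce a spurious zero from mismatched colours. I would handle this by first reducing to a single colour via $1_{\underline{\lambda+2u}}$, invoking the $\mathsf{TL}_{\lambda_j+2u_j}(\delta_j)$-module structure of the $\Cc_j$-component, and then transporting the conclusion back through the isomorphism; parts~(1) and~(2), already established, guarantee the colour data lines up so that this transport is legitimate.
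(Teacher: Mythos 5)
Your part (3) is, at its core, the paper's own argument: exploit invertibility of $\delta_j$ to produce an element of $\TM$ that fixes $a$ and whose very shape forces the $\Cc_j$-arcs to appear in the image. The paper does this in one stroke, taking a single diagram $x$ with $\Top(x)=\Bot(x)=\Top(a)$ carrying a cap--cup pair at every $\Cc_j$-arc of $a$ and identity strands elsewhere, so that $xa=\delta_j^{h}a$ and $\theta(a)=\delta_j^{-h}\sum_i\alpha_i\,x b_i$; since every nonzero $xb_i$ carries the $\Cc_j$-arcs in the positions of $a$, uniqueness of the expansion in the basis $\tm^{|\rangle}[\mu]$ forces each $b_i$ with $\alpha_i\neq 0$ to carry them. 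Your arc-by-arc operator $e$ (normalised by $\delta_j^{-1}$) is the same mechanism run one arc at a time, and your closing coefficient comparison is exactly the right way to finish; the detour through the localisation $1_{\underline{\lambda+2u}}\Delta_n(\lambda)\cong\bigotimes_i\V_{\lambda_i+2u_i,u_i}$ is not needed, since the whole argument can be carried out directly in $\Delta_n(\lambda)$ as above, with the colour bookkeeping handled simply by requiring $\Top(e)=\Bot(e)=\Top(a)$.

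One genuine weak spot: your sketch of part (1) via the bilinear form does not work as stated. The pairing on $\Delta_n(\mu)$ is degenerate precisely in the non-semisimple situations this paper cares about, so pairing $\theta(a)$ against bottom half-diagrams cannot detect coefficients $\alpha_i$ of basis elements $b_i$ lying in the radical of the form. The clean argument (and the one implicit in the paper, which simply cites Lemma 4.1.1 of \cite{JM} for parts (1) and (2)) is idempotent action: let $B\in\Xi^{n,m}$ have the distribution $\Top(a)$ and act with the coloured identity $1_{B}$; then $1_{B}a=a$ while $1_{B}b_i=0$ whenever $\Top(b_i)\neq\Top(a)$, so applying $1_B$ to $\theta(a)=\sum_i\alpha_i b_i$ and comparing basis coefficients gives (1) at once. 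Your part (2) reasoning (propagating number cannot increase, parity is preserved because arcs consume nodes in pairs of one colour) is fine.
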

\begin{proof}
We are going to show only the last part. Assume that $ a $ contains $ h $ arcs of the colour $ \Cc_j $ and $ \delta_j \neq 0 $. Take $ x \in \TM $ to be the diagram defined as follows: $ \Top (x) = \Bot ( x) = \Top (a) $ and if any two nodes $ k,l \in \underline{n} $ are connected in $ a $ by a $ \Cc_j $-arc, then these nodes will be also connected in $ x $ by a $ \Cc_j $-arc and $ k', l' $ will be connected by the same colour, other that all the nodes will be connected to their projection in the bottom row. Note that $ x a = \delta_j^h  a $, so 
\[ \theta (a ) = \delta_j ^{-h} \sum_{i} \alpha_i x  b_i = \sum_{i} \alpha_i    b_i  . \]
The $ \Cc_j $-arcs on the top row will not be affected by the product, so they will be in $ x b_i  $ in the same positions of $ a $ for each $ i $.
\end{proof}

Let $ \lambda \in \Gamma_{(n-2v,m)} $ for some $ v $, and $ \theta : \Delta_n ( \lambda  ) \rightarrow  \Delta_n ( \lambda -2t ) $ be a homomorphism, where $ \lambda-2t = ( \lambda_0-2t_0,  \dots , \lambda_{m-1}-2t_{m-1} ) $. The homomorphism $ \theta $ will be non-zero if and only if there is $ \mu \in \Gamma_{(n,m)} $ of the form $ \mu= \lambda+2p $ for some $ p \in \Gamma_{(v,m)} $ such that $ \theta ( 1_{ \muu } \Delta_n ( \lambda  )) \neq \{ 0 \} $. Thus we can restrict $ \theta $ to define a non-trivial homomorphism 
\[  \theta_{\mu} : \otimes_{i=0}^{m-1} \V_{\mu_i,p_i} \longrightarrow  \otimes_{i=0}^{m-1} \V_{\mu_i,p_i+t_i} . \]
Note that if $ \delta_i \neq 0 $ for each $ i $, so $ p $ does not have any important role since it is corresponding to number of arcs which are actually  preserved, see Proposition \ref{thm5:ghom}. Furthermore, if we have a homomorphism from $ \otimes_{i=0}^{m-1} \V_{\lambda_i+2p_i,p_i} $ to  $ \otimes_{i=0}^{m-1} \V_{ \lambda_i+2p_i,p_i+t_i} $, we can extend it to get a homomorphism from $  \Delta_n ( \lambda ) $ to $ \Delta_n ( \lambda-2t) $. Thus 
\[ \Hom_{\TM} ( \Delta_n ( \lambda ) , \Delta_n ( \lambda-2t))= \{ 0 \} \]
 if and only if 
\[  \Hom_{\otimes_{i=0}^{m-1} \mathsf{TL}_{\mu_i}( \delta_i)} (  \otimes_{i=0}^{m-1} \V_{\mu_i,p_i} , \otimes_{i=0}^{m-1} \V_{\mu_i,p_i+t_i}) = \{ 0 \} \]
 for each $ p \in \Gamma_{(v,m)} $.

 Now, if there is a non-zero homomorphism $ f_i \in \Hom_{  \mathsf{TL}_{\mu_i}( \delta_i)} (   \V_{\mu_i,p_i} ,  \V_{\mu_i,p_i+t_i}) $ for each $ i $, then $ \otimes f_i \in \Hom_{\otimes_{i=0}^{m-1} \mathsf{TL}_{\mu_i}( \delta_i)} (  \otimes_{i=0}^{m-1} \V_{\mu_i,p_i} , \otimes_{i=0}^{m-1} \V_{\mu_i,p_i+t_i}) $ is also non-zero. From the previous details we have the following propositions.

\begin{proposition} \cite[ Theorem 6.2.2]{JM}\textbf{.} 
Let $ \delta_j $ is invertible for each $ j $, $ \lambda'=  \lambda-2t  $ where $ \sum_{j=0}^{m-1} \lambda_j =n-2v $ for some $ v $. Suppose there exist non-zero homomorphisms from $ \V_{\lambda_i,0} $ to $\V_{\lambda_i,t_i} $ as $ \mathsf{TL}_{\lambda_i} ( \delta_i ) $-modules for each $ i $. Then there exists a non-trivial homomorphism from $ \Delta_n ( \lambda  ) $ to $  \Delta_n ( \lambda' )$.
\end{proposition}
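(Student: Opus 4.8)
The plan is to reduce the statement for the bubble algebra $\TM$ to the already-assembled facts about Temperley--Lieb cell modules and their tensor products over the idempotent localisation. First I would invoke the discussion immediately preceding the proposition: a non-zero homomorphism $\Delta_n(\lambda)\to\Delta_n(\lambda')$ with $\lambda'=\lambda-2t$ exists if and only if there is some $\mu=\lambda+2p\in\Gamma_{(n,m)}$ with $p\in\Gamma_{(v,m)}$ such that
\[
\Hom_{\otimes_{i=0}^{m-1}\mathsf{TL}_{\mu_i}(\delta_i)}\big(\textstyle\bigotimes_{i=0}^{m-1}\V_{\mu_i,p_i},\ \bigotimes_{i=0}^{m-1}\V_{\mu_i,p_i+t_i}\big)\neq\{0\},
\]
and moreover any such tensor-product homomorphism extends to $\Delta_n(\lambda)$. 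So it suffices to produce one good choice of $p$ and a non-zero homomorphism between the corresponding tensor products of $\mathsf{TL}$-cell modules.

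Second, since by hypothesis each $\delta_i$ is invertible we may use Proposition~\ref{thm5:ghom}(3): arcs are preserved, so the parameter $p$ (which counts arcs) is immaterial, and the natural choice is $p=0$ wherever possible — i.e. take $\mu=\lambda$, so that the factors are exactly $\V_{\lambda_i,0}$ and $\V_{\lambda_i,t_i}$. The hypothesis gives, for each $i$, a non-zero $f_i\in\Hom_{\mathsf{TL}_{\lambda_i}(\delta_i)}(\V_{\lambda_i,0},\V_{\lambda_i,t_i})$. (One subtlety: if some $\lambda_i=0$ then $t_i=0$ and $f_i=\mathrm{id}$ works trivially; and if $v>0$ one must check $p=0$ is an admissible element of $\Gamma_{(v,m)}$ only when $v=0$, otherwise one instead picks $p\in\Gamma_{(v,m)}$ arbitrarily and uses that arcs are preserved so $f_i$ extends to $\V_{\lambda_i+2p_i,p_i}\to\V_{\lambda_i+2p_i,p_i+t_i}$ — this is exactly the ``$p$ plays no role'' remark and should be spelled out.)

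Third, form the tensor product $\bigotimes_{i=0}^{m-1} f_i$. By the general fact quoted just before the proposition, $\bigotimes f_i$ is a non-zero homomorphism of $\otimes_{i=0}^{m-1}\mathsf{TL}_{\mu_i}(\delta_i)$-modules between $\bigotimes\V_{\mu_i,p_i}$ and $\bigotimes\V_{\mu_i,p_i+t_i}$; non-vanishing holds because a tensor product of non-zero linear maps is non-zero and the module structure is the external tensor product of algebras. Then transport this along Jegan's isomorphism $1_{\muu}\TM 1_{\muu}\cong\bigotimes_i\mathsf{TL}_{\mu_i}(\delta_i)$ and the identification $1_{\muu}\Delta_n(\lambda)\cong\bigotimes_i\V_{\mu_i,p_i}$, and finally extend it to all of $\Delta_n(\lambda)$ using the ``extend it to get a homomorphism from $\Delta_n(\lambda)$ to $\Delta_n(\lambda-2t)$'' remark preceding the proposition. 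This yields the desired non-trivial $\theta\colon\Delta_n(\lambda)\to\Delta_n(\lambda')$.

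The only genuinely delicate point — and the one I would be most careful about — is the claim that a non-zero homomorphism defined on the single summand $1_{\muu}\Delta_n(\lambda)$ genuinely extends to a non-zero homomorphism on all of $\Delta_n(\lambda)$, compatibly with the $\TM$-action rather than merely the $1_{\muu}\TM 1_{\muu}$-action; this is where one must lean on the structure established in Theorem~\ref{thm3:rad7} and the surrounding decomposition of $\Delta_n(\lambda)$ as a sum of $\widehat{\Sym}_{n,m}$-translates of such tensor-product pieces, checking that applying $\theta_\mu$ on one piece and propagating by the $\sigma\in\widehat{\Sym}_{n,m}$ is well-defined and $\TM$-linear. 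Everything else is a routine bookkeeping of colours and arcs.
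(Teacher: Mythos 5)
Your proposal is correct and follows essentially the same route as the paper, which derives this proposition directly from the preceding discussion: the reduction of $\Hom_{\TM}(\Delta_n(\lambda),\Delta_n(\lambda'))$ to homomorphisms between tensor products of Temperley--Lieb cell modules over the localisation $1_{\muu}\TM 1_{\muu}$, the non-vanishing of $\bigotimes_i f_i$, and the extension back to all of $\Delta_n(\lambda)$. The subtlety you flag about passing from $\V_{\lambda_i,0}\to\V_{\lambda_i,t_i}$ to $\V_{\lambda_i+2p_i,p_i}\to\V_{\lambda_i+2p_i,p_i+t_i}$ when $v>0$ is exactly what the paper's ``$p$ does not have any important role'' remark is meant to cover, so your treatment matches the intended argument.
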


\section{The Cartan matrix of the bubble algebra}

 \hspace{15pt}
Throughout this section we assume that $ \delta_i= q_i+q_{i}^{-1} \in \mathbb{C} $ for each $ i $ and at least one of the parameters is a root of unity other than $ \pm 1 $. We aim to compute the decomposition matrix  of the algebra $ \TM $ over complex field, then the Cartan matrix for $ \TM $ can be found, since the bubble algebra is cellular.

\begin{proposition}
Let $ \lambda \in \Gamma_{(n-2v,m)} $ for some $ v   $. The module $ \Delta_n ( \lambda )  $ is simple if and only if $ \lambda_i+1=0 \pmod{ \mathbf{ l}_i }$ whenever $ q_i $ is a root of unity where $ i \in \Z_m $.
\end{proposition}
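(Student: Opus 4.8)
The plan is to use the decomposition of the Gram matrix from Theorem~\ref{thm5:77} together with the classical criterion for simplicity of Temperley--Lieb cell modules. Since we work over a field, the cell module $\Delta_n(\lambda)$ is simple if and only if its bilinear form is non-degenerate, i.e. $\det \Gr_n(\lambda)\neq 0$ (using that $\lambda\in\Lambda^0$, which by the discussion before Theorem~\ref{thm5:77} holds here because at least one $\delta_i\neq 0$). By Theorem~\ref{thm5:77},
\[
\det \Gr_n(\lambda)=\prod_{u\in\Gamma_{(v,m)}}\Bigl(\prod_{j=0}^{m-1}(\det\Gr_{\lambda_j+2u_j,u_j,\delta_j})^{\mathsf{d}_{\lambda_j+2u_j,u_j}^{-1}}\Bigr)^{(\prod_j \mathsf{d}_{\lambda_j+2u_j,u_j})\cdot n_{\lambda+2u}},
\]
so this vanishes if and only if some factor $\det\Gr_{\lambda_j+2u_j,u_j,\delta_j}$ vanishes for some $u\in\Gamma_{(v,m)}$ and some $j$. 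Hence $\Delta_n(\lambda)$ is simple if and only if every Temperley--Lieb cell module $\V_{\lambda_j+2u_j,u_j}$ over $\mathsf{TL}_{\lambda_j+2u_j}(\delta_j)$ is simple, for all $j\in\Z_m$ and all $u\in\Gamma_{(v,m)}$.

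Next I would translate the simplicity of these $\mathsf{TL}$-cell modules into a number-theoretic condition using Propositions~\ref{thm1:TLsemisimple}, \ref{thm1:radheadDim} and the definition of $r_{(n,p)}$. If $q_j$ is not a root of unity, $\mathsf{TL}_{\lambda_j+2u_j}(\delta_j)$ is semisimple and every $\V_{\lambda_j+2u_j,u_j}$ is simple, so those colours impose no condition. If $q_j$ is a root of unity with parameter $\mathbf{l}_j$, then by Proposition~\ref{thm1:radheadDim} the module $\V_{n',p'}$ (here $n'=\lambda_j+2u_j$, $p'=u_j$) has $\RV_{n',p',\delta_j}=0$ precisely when either $(n',p')$ is critical, or it is non-critical with $p'+r_{(n',p')}-\mathbf{l}_j<0$. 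Writing $n'-2p'+1=\lambda_j+1$, I note that $r_{(n',p')}$ depends only on $\lambda_j$ modulo $\mathbf{l}_j$: we have $\lambda_j+1\equiv r_{(\lambda_j+2u_j,u_j)}\pmod{\mathbf{l}_j}$ with $r\in\{1,\dots,\mathbf{l}_j\}$. The claim is that \emph{all} these modules (as $u$ ranges over $\Gamma_{(v,m)}$, so $u_j$ ranges over $0,1,\dots$ up to the constraint $\sum u_j=v$) are simple if and only if $\lambda_j+1\equiv 0\pmod{\mathbf{l}_j}$, i.e. $r_{(\lambda_j+2u_j,u_j)}=\mathbf{l}_j$, i.e. every such pair is critical.

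The two directions: if $\lambda_j+1\equiv 0\pmod{\mathbf{l}_j}$, then $n'-2p'+1=\lambda_j+1$ is a multiple of $\mathbf{l}_j$ so $r_{(n',p')}=\mathbf{l}_j$ and the pair is critical, whence $\RV_{n',p',\delta_j}=0$ for every admissible $u_j$; combined with the non-root-of-unity colours this gives $\det\Gr_n(\lambda)\neq 0$ and simplicity. Conversely, suppose $\lambda_j+1\not\equiv 0\pmod{\mathbf{l}_j}$ for some root-of-unity colour $j$; then $1\le r_{(n',p')}\le\mathbf{l}_j-1$ for all $u_j$, so the pair is never critical, and I must exhibit one admissible $u_j$ with $p'+r_{(n',p')}-\mathbf{l}_j\geq 0$, i.e. $u_j\geq \mathbf{l}_j-r_{(n',p')}$; since $r_{(n',p')}=r$ is a fixed value in $\{1,\dots,\mathbf{l}_j-1\}$ independent of $u_j$, it suffices to take $u_j=\mathbf{l}_j-r$ (then put the remaining $v-u_j$ into the other coordinates of $u$), provided $v\geq \mathbf{l}_j-r$. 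The main obstacle — and the point needing care — is precisely this range issue: one must check that $v$ is large enough, or else handle the small-$v$ boundary cases separately (e.g. when $v<\mathbf{l}_j-r$ no witness exists and $\Delta_n(\lambda)$ may in fact be simple even though $\lambda_j+1\not\equiv 0$). I would resolve this by checking the statement is intended under the running hypothesis that such $v$ occurs, or by adding the caveat; modulo that, assembling the factorisation of $\det\Gr_n(\lambda)$ with the TL radical-dimension formula completes the proof.
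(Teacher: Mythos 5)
Your argument follows the same route as the paper's: the paper runs the computation through the dimension formula $\dim \Lcell_n(\lambda)=\rank \Gr_n(\lambda)$ of equation \eqref{eq5:002} rather than through $\det\Gr_n(\lambda)$, but over a field these are the same criterion, and your treatment of the ``if'' direction coincides with the paper's. The range issue you raise in the converse is not a defect of your write-up but a genuine gap: the paper's proof establishes only the ``if'' direction and leaves the ``only if'' unargued, and your observation shows that, as stated, the converse can fail. Indeed, for a root-of-unity colour $j$ with $\lambda_j+1\equiv r\pmod{\mathbf{l}_j}$ and $1\le r\le \mathbf{l}_j-1$, Proposition \ref{thm1:radheadDim} gives $\RV_{\lambda_j+2u_j,u_j,\delta_j}\neq 0$ only when $u_j\ge \mathbf{l}_j-r$, and since $u_j\le v$ no such witness exists when $v<\mathbf{l}_j-r$. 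A concrete instance: take $m=2$, $n=4$, $\lambda=(1,1)$ (so $v=1$), $q_0$ a primitive fifth root of unity (so $\mathbf{l}_0=5$ and $\delta_0^2-1\neq 0$), and $q_1$ not a root of unity. The only Temperley--Lieb factors occurring in $\Gr_4(1,1)$ are $\Gr_{1,0,\delta_i}=(1)$ and $\Gr_{3,1,\delta_i}$ with $\det\Gr_{3,1,\delta_i}=\delta_i^2-1\neq 0$, so $\Delta_4(1,1)$ is simple although $\lambda_0+1=2\not\equiv 0\pmod{5}$. The proposition therefore needs either the additional hypothesis that $v\ge \mathbf{l}_j-r_j$ for each root-of-unity colour $j$ with $\lambda_j+1\not\equiv 0\pmod{\mathbf{l}_j}$, or a weakening of ``if and only if'' to ``if''; the caveat you propose is exactly the right fix.
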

\begin{proof}
If $ q_i $ is not a root of unity for some $ i\in \Z_m $, Proposition \ref{thm1:TLsemisimple} implies to $ \Lcell_{ \lambda_i+2u_i, u_i, \delta_i } = \V_{ \lambda_i+2u_i ,u_i}  $ for any $ u \in \Gamma_{(v,m)} $. On the other hand, if $ q_i $ is a root of unity for some, recall that $ \dim \Lcell_{ n_i, u_i, \delta_i} = \dim \V_{ n_i,u_i} $ whenever $ n_i-2u_i+1=0 \pmod{ \mathbf{ l}_i}$. Since $ (\lambda_i+2u_i)-2u_i+1=0 \pmod{ \mathbf{ l}_i }$, so $ \Lcell_{ \lambda_i+2u_i, u_i, \delta_i } = \V_{ \lambda_i+2u_i ,u_i}  $. Now, by substituting in equation \eqref{eq5:002}, we obtain $ \dim \Lcell_n ( \lambda ) = \dim \Delta_n ( \lambda ) $, we are done.
\end{proof}

Next we are going to compute the Loewy length and Loewy layers for each cell module of the bubble algebra.

\begin{theorem} \label{thm5:Rad3}
Let $ \Tn $ be the bubble algebra over the complex field and $ \lambda_0+ \lambda_1= n-2v $, $ \lambda_i+t_i+1=0 \pmod{  \mathbf{ l}_i  } $ where $ i=0,1 $ and $ 0 <  t_i < \mathbf{l}_i $, then  
\begin{align*}
 \Lcell_n ( \lambda+2t )  \hookrightarrow \Rad ( \Delta_n ( \lambda  ) ) \twoheadrightarrow  \Lcell_n ( \lambda_0+2t_0, \lambda_1   ) \oplus  \Lcell_n ( \lambda_0 , \lambda_1+2t_1 ) ,
\end{align*}
is an exact sequence,  where $ t=(t_0,t_1) $.  Whenever $ x_0+x_1 > n $, we put $  \Lcell_n ( x_0, x_1 )= \{ 0 \} $  for any $ x_0,x_1 \in \mathbb{N} $.
\end{theorem}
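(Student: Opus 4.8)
The plan is to lift everything to the tensor-product picture provided by the idempotent localisations, prove the statement there using the known representation theory of $\mathsf{TL}$, and then transport the conclusion back to $\Delta_n(\lambda)$. Recall that $\Rad(\Delta_n(\lambda))$ has the explicit description of Theorem~\ref{thm5:rad}, namely it is $\sum_{u,\sigma}\sigma\big(\RV_{\lambda_0+2u_0,u_0,\delta_0}\otimes\V_{\lambda_1+2u_1,u_1}+\V_{\lambda_0+2u_0,u_0}\otimes\RV_{\lambda_1+2u_1,u_1,\delta_1}\big)$, and by the corollary at the end of Section~6 the head $\Lcell_n(\mu)$ of any cell module is $\sum_{u,\sigma}\sigma(\Lcell_{\mu_0+2u_0,u_0,\delta_0}\otimes\Lcell_{\mu_1+2u_1,u_1,\delta_1})$. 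So it suffices to work one localisation block at a time: fix $u\in\Gamma_{(v,2)}$, set $n_i=\lambda_i+2u_i$, and analyse the $\mathsf{TL}_{n_0}(\delta_0)\otimes\mathsf{TL}_{n_1}(\delta_1)$-module $N_u:=\V_{n_0,u_0}\otimes\V_{n_1,u_1}$ together with its radical submodule $R_u:=\RV_{n_0,u_0,\delta_0}\otimes\V_{n_1,u_1}+\V_{n_0,u_0}\otimes\RV_{n_1,u_1,\delta_1}$.

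First I would record the module structure of each tensor factor. Since $\lambda_i+t_i+1\equiv 0\pmod{\mathbf{l}_i}$ with $0<t_i<\mathbf{l}_i$, the pair $(n_i,u_i)$ is non-critical and $r_{(n_i,u_i)}=\mathbf{l}_i-t_i$, so by Theorem~\ref{thm1:cellhom2} and Proposition~\ref{thm1:radheadDim} the $\mathsf{TL}_{n_i}(\delta_i)$-module $\V_{n_i,u_i}$ has a two-layer structure: $\RV_{n_i,u_i,\delta_i}\cong\Lcell_{n_i,u_i+r_{(n_i,u_i)}-\mathbf{l}_i,\delta_i}=\Lcell_{n_i,u_i-t_i,\delta_i}$ (or it vanishes, when $u_i<t_i$), and the head is $\Lcell_{n_i,u_i,\delta_i}$. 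Thus $\V_{n_0,u_0}\otimes\V_{n_1,u_1}$, as a module over the tensor-product algebra, has radical filtration of length at most three with layers governed by the bidegree in $(\RV_0,\RV_1)$: the top is $\Lcell_0\otimes\Lcell_1$, the middle is $(\RV_0/\RV_0\RV_0)\otimes\Lcell_1 \oplus \Lcell_0\otimes(\RV_1/\cdots) = \Lcell_{n_0,u_0-t_0,\delta_0}\otimes\Lcell_{n_1,u_1,\delta_1}\oplus\Lcell_{n_0,u_0,\delta_0}\otimes\Lcell_{n_1,u_1-t_1,\delta_1}$, and the socle layer is $\RV_0\otimes\RV_1\cong\Lcell_{n_0,u_0-t_0,\delta_0}\otimes\Lcell_{n_1,u_1-t_1,\delta_1}$. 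Here I would invoke that $\V_{n_i,u_i}$ is uniserial of length $\le 2$ over $\mathsf{TL}_{n_i}(\delta_i)$ in the non-critical case, so that no unexpected extensions between the $\Lcell$'s appear and $\RV_0\otimes\RV_1 = R_u \cap (\RV_0\otimes\RV_1)$ really sits at the bottom; this and the identity $(\RV_0\otimes\V_1)\cap(\V_0\otimes\RV_1)=\RV_0\otimes\RV_1$ noted in the proof of Theorem~\ref{thm5:rad} give $R_u/(\RV_0\otimes\RV_1)\cong\Lcell_{n_0,u_0-t_0,\delta_0}\otimes\Lcell_{n_1,u_1,\delta_1}\oplus\Lcell_{n_0,u_0,\delta_0}\otimes\Lcell_{n_1,u_1-t_1,\delta_1}$ and $\RV_0\otimes\RV_1 = R_u\cap(\text{socle})\cong\Lcell_{n_0,u_0-t_0,\delta_0}\otimes\Lcell_{n_1,u_1-t_1,\delta_1}$.

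Now I would sum over $u$ and over $\sigma\in\widehat{\Sym}_{n,2}$ and reassemble. The submodule $\RV_0\otimes\RV_1$ in the block $u$, summed against all colour-distribution permutations, is exactly $\sum_{u,\sigma}\sigma(\Lcell_{\lambda_0+2t_0+2u_0',u_0',\delta_0}\otimes\Lcell_{\lambda_1+2t_1+2u_1',u_1',\delta_1})$ after the reindexing $u_i = u_i'+t_i$ — i.e. it is $\Lcell_n(\lambda_0+2t_0,\lambda_1+2t_1)=\Lcell_n(\lambda+2t)$ by the head-formula corollary, and it is the desired submodule of $\Rad(\Delta_n(\lambda))$; this gives the injection $\Lcell_n(\lambda+2t)\hookrightarrow\Rad(\Delta_n(\lambda))$. (When $\lambda_0+2t_0+\lambda_1+2t_1>n$ there is no valid $u'$ and this term is $\{0\}$, matching the stated convention.) Dually, $R_u/(\RV_0\otimes\RV_1)$ summed over $u,\sigma$ assembles, again by the head formula, into $\Lcell_n(\lambda_0+2t_0,\lambda_1)\oplus\Lcell_n(\lambda_0,\lambda_1+2t_1)$, giving the surjection $\Rad(\Delta_n(\lambda))\twoheadrightarrow\Lcell_n(\lambda_0+2t_0,\lambda_1)\oplus\Lcell_n(\lambda_0,\lambda_1+2t_1)$ with kernel $\Lcell_n(\lambda+2t)$. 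Finally I would check exactness by a dimension count: $\dim\Rad(\Delta_n(\lambda))$ from Lemma~\ref{thm5:dimRad} must equal $\dim\Lcell_n(\lambda+2t)+\dim\Lcell_n(\lambda_0+2t_0,\lambda_1)+\dim\Lcell_n(\lambda_0,\lambda_1+2t_1)$; using $\dim\RV_{n_i,u_i,\delta_i}=\dim\Lcell_{n_i,u_i-t_i,\delta_i}$ this reduces to the elementary identity $a_0A_1+A_0a_1-a_0a_1 = a_0a_1 + (A_0-a_0)a_1 \cdot\! \text{(as head/rad dims)} \dots$, more precisely to $\dim\RV_0\dim\V_1+\dim\V_0\dim\RV_1-\dim\RV_0\dim\RV_1 = \dim\RV_0\dim\RV_1 + \dim\Lcell_0\dim\RV_1 + \dim\RV_0\dim\Lcell_1$, which is just $\dim\V_i=\dim\Lcell_i+\dim\RV_i$ expanded.

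I expect the main obstacle to be the second and third paragraphs' claim that the filtration has no extra glue — that is, showing the short exact sequence really is exact (not merely that the sub and quotient have the right composition factors). The safe route is the dimension count above, which forces exactness once the sub-object $\Lcell_n(\lambda+2t)$ is shown to be a genuine submodule of $\Rad(\Delta_n(\lambda))$ and the quotient map onto $\Lcell_n(\lambda_0+2t_0,\lambda_1)\oplus\Lcell_n(\lambda_0,\lambda_1+2t_1)$ is shown to be well-defined and surjective; both of those follow from Theorem~\ref{thm5:rad}, the module identity $(\RV_0\otimes\V_1)\cap(\V_0\otimes\RV_1)=\RV_0\otimes\RV_1$, and the corollary computing heads. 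A secondary subtlety is bookkeeping the reindexing $u\mapsto u-t$ and the boundary conventions (terms vanishing when an index exceeds the allowed range), which I would handle by stating once that $\Lcell_{n_i,p,\delta_i}=\{0\}$ for $p<0$ and that the outer conventions of the theorem are exactly the images of these vanishings.
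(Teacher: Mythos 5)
Your strategy coincides with the paper's: decompose $\Rad(\Delta_n(\lambda))=W_1+W_2$ with $W_{1,2}=W_1\cap W_2$ (in your language, the pieces $\RV_0\otimes\V_1$, $\V_0\otimes\RV_1$, $\RV_0\otimes\RV_1$ summed over $u$ and over colour distributions $\sigma$), identify the bottom with $\Lcell_n(\lambda+2t)$ and the quotient with $\Lcell_n(\lambda_0+2t_0,\lambda_1)\oplus\Lcell_n(\lambda_0,\lambda_1+2t_1)$, and close with a dimension count. The gap is at the identification step, which is precisely the non-trivial content. The head-formula corollary expresses $\Lcell_n(\lambda+2t)$ as a sum of images inside (a quotient of) $\Delta_n(\lambda+2t)$, whereas your candidate submodule $\sum_{u,\sigma}\sigma\bigl(\RV_{\lambda_0+2u_0,u_0,\delta_0}\otimes\RV_{\lambda_1+2u_1,u_1,\delta_1}\bigr)$ lives inside $\Delta_n(\lambda)$; the reindexing $u_i=u_i'+t_i$ matches bases and dimensions of subspaces of two \emph{different} ambient $\TN$-modules, and a dimension coincidence does not by itself produce the $\TN$-module isomorphism that the exact sequence asserts. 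The paper supplies this by constructing an explicit map $\Psi:\Lcell_n(\lambda+2t)\to W_{1,2}$, obtained by extending the Temperley--Lieb isomorphisms $\Lcell_{\lambda_i+2u_i,u_i-t_i,\delta_i}\cong\RV_{\lambda_i+2u_i,u_i,\delta_i}$ (Theorem \ref{thm1:cellhom2}) simultaneously over all $u$ and all colour distributions, then checking $\Psi$ is well defined under the action of an arbitrary bubble diagram $D$ (reducing to $D=D_0\otimes D_1$ because the colour distribution and top arcs of $D$ do not affect the product), and finally invoking simplicity of $\Lcell_n(\lambda+2t)$ together with equality of dimensions and the first isomorphism theorem; the same construction is repeated for $W_i/W_{1,2}$. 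Your proposal asserts these identifications ``by the head-formula corollary,'' which only gives vector-space data.

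A second, related omission: you never verify that $W_1$, $W_2$ (hence $W_{1,2}$) are $\TN$-submodules of $\Rad(\Delta_n(\lambda))$, and your appeal to Theorem \ref{thm5:rad} does not give this, since that theorem only identifies their \emph{sum} with the radical. Your block-by-block analysis shows $R_u$ is stable under the localized algebra $1_{\muu}\TN 1_{\muu}$, but a general bubble diagram changes the colour distribution $\sigma$ and the arc numbers $u$, so stability of each individual $W_i$ under all of $\TN$ is not automatic; the paper proves it by a separate diagrammatic argument, writing $Dx=\zeta\bigl((D_0'a_0)\otimes(D_1'a_1)\bigr)$ and using that $\V\neq\RV$ in the other colour to force $D_0'a_0$ to remain in the radical. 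Your dimension-count logic for exactness is correct as far as it goes, but it only closes the argument once these two module-theoretic inputs (the submodule property and the genuine $\TN$-homomorphisms) are established, and those are exactly the points your write-up leaves unproved.
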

\begin{proof}
Let $ \RV_{ u_i , i } := \RV_{ \lambda_i+2u_i,u_i , \delta_i }$ and $  \V_{ u_i }  :=  \V_{ \lambda_i+2u_i,u_i } $. Define $ W_1,W_2 $ and $ W_{1,2} $ to be 
\[ W_1: =  \sum_{ u \in \Gamma_{(v,2)} } \;  \sum_{ \sigma  \in \widehat{\Sym}_{n,2} }    \sigma (   \,   \RV_{ u_0 ,0 }  \otimes  \V_{u_1  } \; )  , \;\;\;\;\;\; W_2: =  \sum_{ u \in \Gamma_{(v,2)} } \;  \sum_{ \sigma  \in \widehat{\Sym}_{n,2} }    \sigma ( \; \V_{u_0  }  \otimes \RV_{ u_1 ,1 } \; )  , \]
\[ W_{1,2}: =  \sum\limits_{ u \in \Gamma_{(v,2)} } \;  \sum\limits_{ \sigma  \in \widehat{\Sym}_{n,2} }    \sigma ( \; \RV_{ u_0 ,0 }  \otimes  \RV_{u_1,1  } \; ) .\]

Note that $ \Rad ( \Delta_n ( \lambda  ) ) = W_1+W_2 $, see Theorem \ref{thm5:rad}, and $ W_{1,2}=W_1 \cap W_2 $. To prove our theorem we are going to show that $  \Lcell_n ( \lambda+2t ) \cong W_{1,2} $ and $ (W_1+W_2) / W_{1,2} \cong \Lcell_n ( \lambda_0+2t_0, \lambda_1   ) \oplus  \Lcell_n ( \lambda_0 , \lambda_1+2t_1 ) $.

First, we need to show that $ W_1 $ and $ W_2 $ are sub-modules of the module $ \Rad ( \Delta_n ( \lambda  ) ) $. This implies to $ W_{1,2} $ is also sub-module. Let $ x = \sigma ( a_0 \otimes a_1) \in W_1 $ where $ \sigma \in \widehat{\Sym}_{n,2} $, $ a_0 \in \RV_{u_0,0} $ and $ a_1 $ is a link state in $ \V_{u_1} $ for some $ u $, and let $ D=(D_0, D_1) \in \tn $. Since $ \Rad ( \Delta_n ( \lambda  ) ) =W_1+W_2 $, so $ x $ and $ Dx $ is also contained in $  \Rad ( \Delta_n ( \lambda  ) ) $. Without losing the generality, we can assume $ Dx \neq 0 $, then from the graphical visualization we have $ Dx= \zeta \big( ( D'_0a_0)  \otimes (D'_1 a_1) \big) $  for some $ \zeta \in \widehat{\Sym}_{n,2} $ and $ D'_i $ is the diagram $ D_i $ after ignoring the colour, see next example. Note that there exists $ D_1 $ such that $ D'_1 a_1 $ will be not contained $ \RV_{u'_1,1} $ for some $ u'_1 $ since $ \delta_1\neq 0 $(as $ \V_{u'_1} \neq \RV_{u'_1,1} $),  thus $ D'_0a_0 \in \RV_{u'_0,0} $ for some $ u' \in \Gamma_{(v,2)} $, from this we have $ Dx \in W_1 $, this implies to $ W_1 $ is a sub-module. Similarly, $ W_2 $ is a sub-module of $  \Rad ( \Delta_n ( \lambda  ) )$.

From Theorem \ref{thm1:cellhom2}, there is an $ \mathsf{TL}_{\lambda_i+2u_i}( \delta_i)  $-isomorphism $ f_{u_i,i} :  \Lcell_{ u_i , i }  \rightarrow \RV_{ u_i, i } $, where $ \Lcell_{ u_i, i } : = \Lcell_{ \lambda_i+2u_i,u_i , \delta_i }$, since $ \lambda_i+t_i+1=0 \pmod{  \mathbf{ l}_i  } $ and $ 0 <  t_i < \mathbf{l}_i $  for each $ u \in \Gamma_{(v,2)} $. By using one of these isomorphisms we can define a non-zero homomorphism from $  \Lcell_n ( \lambda+2t ) $ to $ W_{1,2} $ as follows: Fix $ u \in \Gamma_{(v,2)} $ and $ f_{u_0,0} $ and $ f_{u_1,1} $. By using $ f_{u_0,0} $ and $ f_{u_1,1} $ we obtain an $ \mathsf{TL}_{\lambda_0+2u_0}(\delta_0) \otimes \mathsf{TL}_{\lambda_1+2u_1} (\delta_1)$-module isomorphism $  f_{u_0,0} \otimes f_{u_1,1}  $ from $ \Lcell_{u_0,0} \otimes \Lcell_{u_1,1} $ to $ \RV_{u_0,0} \otimes \RV_{u_1,1} $. Hence we can define the map $ \Psi :  \Lcell_n ( \lambda+2t ) \rightarrow W_{1,2} $ to be the extension of $  f_{u_0,0} \otimes f_{u_1,1}  $ as $  \Lcell_n ( \lambda+2t ) = \sum_{ w \in \Gamma_{(v,2)} }   \sum_{ \sigma  \in \widehat{\Sym}_{n,2} }    \sigma ( \; \Lcell_{ w_0 ,0 }  \otimes  \Lcell_{w_1,1  } ) $. To prove that $ \Psi $ is a non-zero $ \TN $-homomorphism, it is enough to show that $ \Psi $ is well-defined, i.e. if $ D(a_0 \otimes a_1)=0 $ where $ D \in \tn $ and $ a_i \in \Lcell_{u_i,i} $ for each $ i $, then $ D ( f_{u_0,0} (a_0) \otimes f_{u_1,1}(a_1)) =0 $. From the way we define  $ \RV_{ u_0 ,0 }  \otimes  \RV_{u_1,1  } $ and $ \Lcell_{ u_0 ,0 }  \otimes  \Lcell_{u_1,1  } $, we have $ a_0 \otimes a_1 $ and $  f_{u_0,0} (a_0) \otimes f_{u_1,1}(a_1) $ have the same top $  \underline{\lambda+2u } $, and since the set $ \Top ( D) $ and the arcs in top half of $ D $ don't have any effect on the product, so without losing the generality we can assume that $ D= D_0 \otimes D_1  $ where $D_i \in \mathsf{TL}_{\lambda_i+2u_i} $, but then every thing comes directly from the fact $  f_{u_0,0} \otimes f_{u_1,1}  $ is an $ \mathsf{TL}_{\lambda_0+2u_0}(\delta_0) \otimes \mathsf{TL}_{\lambda_1+2u_1} (\delta_1)$- module isomorphism, hence $ \Psi $ is a non-zero homomorphism. Now as $  \Lcell_n ( \lambda+2t ) $ is simple module and the fact that they have the same dimension, finally by using the first isomorphism theorem we obtain that they are isomorphic.

 Now since $ W_{1,2}=W_1\cap W_2 $, so $ (W_1+W_2)/ W_{1,2} \cong W_1/W_{1,2} \oplus W_2/ W_{1,2}$ . Also 
\begin{align*}
 \frac{W_1 }{W_{1,2}}  = \frac{ \sum\limits_{ u \in \Gamma_{(v,2)} } \;  \sum\limits_{ \sigma  \in \widehat{\Sym}_{n,2} }    \sigma  \big( \,   \RV_{ u_0 ,0 }  \otimes  \V_{u_1  } \big) }{ \sum\limits_{ u \in \Gamma_{(v,2)} } \;  \sum\limits_{ \sigma  \in \widehat{\Sym}_{n,2} }    \sigma  \big( \,   \RV_{ u_0 ,0 }  \otimes  \RV_{u_1,1  } \big) } ,  \cong  \sum\limits_{ u \in \Gamma_{(v,2)} } \;  \sum\limits_{ \sigma  \in \widehat{\Sym}_{n,2} }    \sigma   \,   \RV_{ u_0 ,0 }  \otimes  \Lcell_{ u_1, 1 }.
 \end{align*}
 Now as we did to prove the isomorphism between $ W_{1,2} $ and $\Lcell_n ( \lambda+2t )  $, we can show that $ \sum\limits_{ u \in \Gamma_{(v,2)} } \;  \sum\limits_{ \sigma  \in \widehat{\Sym}_{n,2} }    \sigma   \,   \RV_{ u_0 ,0 }  \otimes  \Lcell_{ u_i, i } \cong  \Lcell_n ( \lambda_0+2t_0, \lambda_1   )  $. Similarly, $ W_2 /W_{1,2}  \cong \Lcell_n ( \lambda_0, \lambda_1+2t_1   )$.
\end{proof}

\begin{example}
Let $ \delta_0=\delta_1=1 $. It is easy to show that \begin{tikzpicture}
          \draw[gray]    (0,0.4) -- (1,0.4);
            \draw (0.75,0)--(0.75, 0.4 );
            \draw [domain=180:360] plot ({0.125*cos(\x)+0.375}, {0.25*sin(\x)+0.4});
            \node[above] at (1.3,0) { $ - $};
\end{tikzpicture}
\begin{tikzpicture}
          \draw[gray]    (0,0.4) -- (1,0.4);
            \draw (0.25,0)--(0.25, 0.4 );
            \draw [domain=180:360] plot ({0.125*cos(\x)+0.625}, {0.25*sin(\x)+0.4});
\end{tikzpicture} is an element in $ \RV_{ 3 ,1, \delta_0 } $, so the element  \begin{tikzpicture}
          \draw[gray]    (0,0.4) -- (1.75,0.4);
            \fill[red] (0.25,0.4)  circle[radius=1.5pt];
            \fill[red] (0.5, 0.4)  circle[radius=1.5pt];
            \fill[blue] (0.75,0.4)  circle[radius=1.5pt];
            \fill[red] (1, 0.4)  circle[radius=1.5pt];
            \fill[blue] (1.25,0.4)  circle[radius=1.5pt];
            \fill[blue] (1.5, 0.4 )  circle[radius=1.5pt];
            \draw[red] (1,0)--(1, 0.4 );
             \draw[blue] (1.5,0)--(1.5, 0.4 );
            \draw [domain=180:360][red] plot ({0.125*cos(\x)+0.375}, {0.25*sin(\x)+0.4});
            \draw [domain=180:360][blue] plot ({0.25*cos(\x)+1}, {0.25*sin(\x)+0.4});
            \node[above] at (2.15,0) { $ - $};
\end{tikzpicture}
\begin{tikzpicture}
          \draw[gray]    (0,0.4) -- (1.75,0.4);
            \fill[red] (0.25,0.4)  circle[radius=1.5pt];
            \fill[red] (0.5, 0.4)  circle[radius=1.5pt];
            \fill[blue] (0.75,0.4)  circle[radius=1.5pt];
            \fill[red] (1, 0.4)  circle[radius=1.5pt];
            \fill[blue] (1.25,0.4)  circle[radius=1.5pt];
            \fill[blue] (1.5, 0.4 )  circle[radius=1.5pt];
            \draw[red] (0.25,0)--(0.25, 0.4 );
             \draw[blue] (1.5,0)--(1.5, 0.4 );
            \draw [domain=180:360][red] plot ({0.25*cos(\x)+0.75}, {0.25*sin(\x)+0.4});
            \draw [domain=180:360][blue] plot ({0.25*cos(\x)+1}, {0.25*sin(\x)+0.4});
\end{tikzpicture} is contained in  $ \Rad ( \Delta_6 (1,1)) $, since it is an element in   $ \sigma \big(  \RV_{ 3 ,1, \delta_0 } \otimes  \V_{ 3 ,1} \big) $ for some $ \sigma  \in \widehat{\Sym_{6,2}} $. Also 
\begin{center}
\begin{tikzpicture}
          \draw[gray]    (0.1,0) rectangle (1.65,0.75);
            \fill[red] (0.25,0)  circle[radius=1.5pt];
            \fill[red] (0.5, 0)  circle[radius=1.5pt];
            \fill[blue] (0.75,0)  circle[radius=1.5pt];
            \fill[red] (1, 0)  circle[radius=1.5pt];
            \fill[blue] (1.25,0)  circle[radius=1.5pt];
            \fill[blue] (1.5, 0 )  circle[radius=1.5pt];
             \fill[red] (0.25,0.75)  circle[radius=1.5pt];
            \fill[blue] (0.5, 0.75)  circle[radius=1.5pt];
            \fill[red] (0.75,0.75)  circle[radius=1.5pt];
            \fill[red] (1,0.75)  circle[radius=1.5pt];
            \fill[red] (1.25,0.75)  circle[radius=1.5pt];
            \fill[red] (1.5, 0.75 )  circle[radius=1.5pt];           
            \draw[red] (1, 0.75)--(0.25, 0 );
            \draw[red] (1.25,0.75)--(0.5, 0 );
            \draw[red] (1.5,0.75)--(1, 0 );
             \draw[blue] (0.5,0.75)--(0.75, 0 );
            \draw [domain=180:360][red] plot ({0.25*cos(\x)+0.5}, {0.25*sin(\x)+0.75});
            \draw [domain=180:0][blue] plot ({0.125*cos(\x)+1.375}, {0.25*sin(\x)+0});
\end{tikzpicture}
\begin{tikzpicture}
            \node[above] at (-0.2,-0.3) { $ \Big( $};
          \draw[gray]    (0,0.5) -- (1.65,0.5);
            \fill[red] (0.25,0.5)  circle[radius=1.5pt];
            \fill[red] (0.5, 0.5)  circle[radius=1.5pt];
            \fill[blue] (0.75,0.5)  circle[radius=1.5pt];
            \fill[red] (1, 0.5)  circle[radius=1.5pt];
            \fill[blue] (1.25,0.5)  circle[radius=1.5pt];
            \fill[blue] (1.5, 0.5 )  circle[radius=1.5pt];
            \draw[red] (1,0)--(1, 0.5 );
             \draw[blue] (1.5,0)--(1.5, 0.5 );
            \draw [domain=180:360][red] plot ({0.125*cos(\x)+0.375}, {0.25*sin(\x)+0.5});
            \draw [domain=180:360][blue] plot ({0.25*cos(\x)+1}, {0.25*sin(\x)+0.5});
            \node[above] at (2,0) { $ - $};
\end{tikzpicture}
\begin{tikzpicture}
          \draw[gray]    (0,0.5) -- (1.65,0.5);
            \fill[red] (0.25,0.5)  circle[radius=1.5pt];
            \fill[red] (0.5, 0.5)  circle[radius=1.5pt];
            \fill[blue] (0.75,0.5)  circle[radius=1.5pt];
            \fill[red] (1, 0.5)  circle[radius=1.5pt];
            \fill[blue] (1.25,0.5)  circle[radius=1.5pt];
            \fill[blue] (1.5, 0.5 )  circle[radius=1.5pt];
            \draw[red] (0.25,0)--(0.25, 0.5 );
             \draw[blue] (1.5,0)--(1.5, 0.5 );
            \draw [domain=180:360][red] plot ({0.25*cos(\x)+0.75}, {0.25*sin(\x)+0.5});
            \draw [domain=180:360][blue] plot ({0.25*cos(\x)+1}, {0.25*sin(\x)+0.5});
            \node[above] at (2,-0.3) { $ \Big)= $};
\end{tikzpicture}
\begin{tikzpicture}
          \draw[gray]    (0.1,0) rectangle (1.65,0.75);
            \fill[red] (0.25,0)  circle[radius=1.5pt];
            \fill[red] (0.5, 0)  circle[radius=1.5pt];
            \fill[red] (0.75,0)  circle[radius=1.5pt];
            \fill[red] (1, 0)  circle[radius=1.5pt];
            \fill[red] (1.25,0)  circle[radius=1.5pt];
            \fill[blue] (1.5, 0 )  circle[radius=1.5pt];
             \fill[red] (0.25,0.75)  circle[radius=1.5pt];
            \fill[blue] (0.5, 0.75)  circle[radius=1.5pt];
            \fill[red] (0.75,0.75)  circle[radius=1.5pt];
            \fill[red] (1, 0.75)  circle[radius=1.5pt];
            \fill[red] (1.25,0.75)  circle[radius=1.5pt];
            \fill[red] (1.5, 0.75 )  circle[radius=1.5pt];           
            \draw[red] (0.25,0.75)--(0.25, 0 );
            \draw[red] (0.75,0.75)--(0.5, 0 );
            \draw[red] (1,0.75)--(0.75, 0 );
            \draw[red] (1.25,0.75)--(1, 0 );
            \draw[red] (1.5,0.75)--(1.25, 0 );
             \draw[blue] (0.5,0.75)--(1.5, 0 );
\end{tikzpicture}
\begin{tikzpicture}
            \node[above] at (-0.2,-0.3) { $ \Big( $};
          \draw[gray]    (0,0.5) -- (1.75,0.5);
            \fill[red] (0.25,0.5)  circle[radius=1.5pt];
            \fill[red] (0.5, 0.5)  circle[radius=1.5pt];
            \fill[red] (0.75,0.5)  circle[radius=1.5pt];
            \fill[red] (1, 0.5)  circle[radius=1.5pt];
            \fill[red] (1.25,0.5)  circle[radius=1.5pt];
            \fill[blue] (1.5, 0.5 )  circle[radius=1.5pt];
            \draw[red] (1.25,0)--(1.25, 0.5 );
             \draw[blue] (1.5,0)--(1.5, 0.5 );
            \draw [domain=180:360][red] plot ({0.125*cos(\x)+0.375}, {0.25*sin(\x)+0.5});
            \draw [domain=180:360][red] plot ({0.125*cos(\x)+0.875}, {0.25*sin(\x)+0.5});
            \node[above] at (2.15,0.1) { $ - $};
\end{tikzpicture}
\begin{tikzpicture}
          \draw[gray]    (0,0.5) -- (1.75,0.5);
            \fill[red] (0.25,0.5)  circle[radius=1.5pt];
            \fill[red] (0.5, 0.5)  circle[radius=1.5pt];
            \fill[red] (0.75,0.5)  circle[radius=1.5pt];
            \fill[red] (1, 0.5)  circle[radius=1.5pt];
            \fill[red] (1.25,0.5)  circle[radius=1.5pt];
            \fill[blue] (1.5, 0.5 )  circle[radius=1.5pt];
            \draw[red] (0.75,0)--(0.75, 0.5 );
             \draw[blue] (1.5,0)--(1.5, 0.5 );
            \draw [domain=180:360][red] plot ({0.125*cos(\x)+0.375}, {0.25*sin(\x)+0.5});
            \draw [domain=180:360][red] plot ({0.125*cos(\x)+1.125}, {0.25*sin(\x)+0.5});
            \node[above] at (2.15,-0.3) { $ \Big),$};
\end{tikzpicture}
\end{center}
note that the element \begin{tikzpicture}
          \draw[gray]    (0,0.4) -- (1.5,0.4);
            \draw(1.25,0)--(1.25, 0.4 );
            \draw [domain=180:360] plot ({0.125*cos(\x)+0.375}, {0.25*sin(\x)+0.4});
            \draw [domain=180:360] plot ({0.125*cos(\x)+0.875}, {0.25*sin(\x)+0.4});
            \node[above] at (2,0) { $ - $};
\end{tikzpicture}
\begin{tikzpicture}
          \draw[gray]    (0,0.4) -- (1.5,0.4);
            \draw (0.75,0)--(0.75, 0.4 );
            \draw [domain=180:360] plot ({0.125*cos(\x)+0.375}, {0.25*sin(\x)+0.4});
            \draw [domain=180:360] plot ({0.125*cos(\x)+1.125}, {0.25*sin(\x)+0.4});
\end{tikzpicture}
 is an element in $ \RV_{ 5 ,2, \delta_0 } $.
\end{example}

\begin{example}
Let $ \breve{\delta}= ( 0, \sqrt{2} )  $, then $ \mathbf{ l}_0=2 $ and $ \mathbf{ l}_1=4 $ and the critical lines are $ \lambda_0=1,3,5, \dots $ and $ \lambda_1=3,7, \dots $ which are represented by coloured lines in figure \ref{fig5:59}. Also the arrows in the figure represent non-zero homomorphisms between the cell modules that are indexed by the nodes in the figure. Two nodes will be in the same block if and only if there is an arrow between them. Then decomposition matrix of the algebra $ \mathbb{T}_{6,2} ( 0, \sqrt{2} ) $  is 
 \[  \begin{pmatrix}
1 &  1 & 1 & 0 & 0  \\
0 &  1 & 0 & 1 & 0   \\
0 &  0 & 1 & 0 & 0  \\
0 &  0 & 0 & 1 & 1 \\
0 &  0 & 0 & 0 & 1 
\end{pmatrix}  \bigoplus   \begin{pmatrix}
1 &  1    \\
0 &  1  
\end{pmatrix}  \bigoplus  \begin{pmatrix}
1 &  1 & 1 & 0 & 1 \\
0 &  1 & 0& 1 & 1   \\
0 &  0 & 1 & 0& 1  \\
0 &  0 & 0 & 1 & 0 \\
0 &  0 & 0 & 0 & 1 
\end{pmatrix}  \bigoplus \bigoplus\limits^{4} (1), \]
we order the basis as following $ \{ (0,0),(2,0), (0,6), (4,0), (6,0), (1,1),(1,5), (0,2), (2,2),$  $ (0,4), (4,2),(2,4), (3,1), (1,3), (5,1), (3,3) \} $. Then the Cartan matrix of $ \mathbb{T}_{6,2} ( \breve{ \delta } ) $ is 
 \[  \begin{pmatrix}
1 &  1 & 1 & 0 & 0  \\
1 &  2 & 1& 1 & 0   \\
1 &  1 & 2 & 0 & 0  \\
0 &  1 & 0 & 2 & 1 \\
0 &  0 & 0 & 1 & 2 
\end{pmatrix}  \bigoplus   \begin{pmatrix}
1 &  1    \\
1 &  2  
\end{pmatrix}  \bigoplus  \begin{pmatrix}
1 &  1 & 1 & 0 & 1 \\
1 &  2 & 1& 1 & 2   \\
1 &  1 & 2 & 0& 2 \\
0 &  1 & 0 & 2 & 1 \\
1 &  2 & 2 & 1 & 4 
\end{pmatrix}  \bigoplus \bigoplus\limits^{4} (1). \]

\begin{figure}[!h] 
  \centering
    \includegraphics[width=120mm,scale=1]{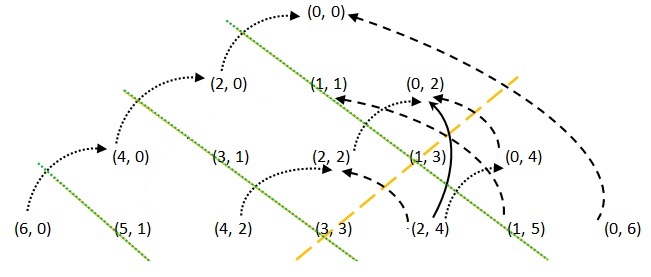}
    \caption{The Bratteli diagram of $ \mathbb{T}_{6,2} ( \breve{ \delta } )$ when $ \mathbf{ l}_0=2$ and $ \mathbf{ l}_1=4 $.}
    \label{fig5:59}
\end{figure}
 
\end{example}

 \hspace{9pt} Next theorem is a generalization of last theorem in the case $ m>2 $ with several parameters are roots of unity. 

\begin{theorem} \label{thm5:general}
Let $ \TM ( \breve{\delta} ) $ be the bubble algebra over the complex field and $ \lambda \in \Gamma_{(n-2v,m)} $, $ 0 \leq  s<m $. For each $ i>s $, suppose either $ q_i $ is not a root of unity or $ \lambda_i+1=0 \pmod{  \mathbf{ l}_i } $ when $ q_i $ is a root of unity, and  for each $ j \leq s $ we have $ \lambda_j+t_j+1=0 \pmod{  \mathbf{ l}_j } $ and $ 0 <  t_j < \mathbf{l}_j $. Then the length of the radical series of $  \Delta_n ( \lambda ) $ is less than or equal to $ s+1 $, and the radical layers are
\[ \Rad^{k} ( \Delta_n ( \lambda )) / \Rad^{k+1} ( \Delta_n ( \lambda )) \cong \bigoplus\limits_{ \lambda' \in \Xi_k } \Lcell_n ( \lambda' ) , \]
where $ \Xi_k = \{ \lambda' | \text{ there are exactly } k \; \text{values of } j \text{ where } 0 \leq j \leq s \text{ such that } \lambda'_j=\lambda_j+2t_j \text{ and for the other values we have } \lambda'_i = \lambda_i \} $ and $ 0 \leq k \leq s+1 $. We put $ \Lcell_n ( \lambda' ) = \{ 0 \} $ whenever $ \sum \lambda'_i > n $.
\end{theorem}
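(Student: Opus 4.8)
The plan is to combine Theorem~\ref{thm3:rad7}, which writes $\Rad(\Delta_n(\lambda))$ as an explicit sum of $\widehat{\Sym}_{n,m}$-images of tensor products of $\mathsf{TL}$-cell modules and their form-radicals, with the Loewy structure of tensor products over $\mathbb{C}$, read off through the localisation isomorphisms $1_{\muu}\TM 1_{\muu}\cong\bigotimes_{i=0}^{m-1}\mathsf{TL}_{\mu_i}(\delta_i)$. First I would note that the colours $s+1,\dots,m-1$ contribute nothing to the radical: if $q_i$ is not a root of unity then $\mathsf{TL}_{\lambda_i+2u_i}(\delta_i)$ is semisimple (Proposition~\ref{thm1:TLsemisimple}), and if $\lambda_i+1\equiv 0\pmod{\mathbf{l}_i}$ then $(\lambda_i+2u_i)-2u_i+1=\lambda_i+1\equiv 0\pmod{\mathbf{l}_i}$, so $(\lambda_i+2u_i,u_i)$ is critical and $\V_{\lambda_i+2u_i,u_i}$ is simple; in both cases $\RV_{\lambda_i+2u_i,u_i,\delta_i}=0$. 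Hence Theorem~\ref{thm3:rad7} collapses to $\Rad(\Delta_n(\lambda))=\sum_{i=0}^{s}W_{\{i\}}$, where for $J\subseteq\{0,\dots,s\}$ I set $W_J:=\sum_{u\in\Gamma_{(v,m)}}\;\sum_{\sigma\in\widehat{\Sym}_{n,m}}\sigma\big(\bigotimes_{i\in J}\RV_{\lambda_i+2u_i,u_i,\delta_i}\otimes\bigotimes_{i\notin J}\V_{\lambda_i+2u_i,u_i}\big)$, interpreted inside $1_{\underline{\lambda+2u}}\Delta_n(\lambda)$ by the convention preceding Theorem~\ref{thm5:77}.

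Second, I would prove that each $W_J$ is a $\TM$-submodule and that $W_{J_1}+W_{J_2}=W_{J_1\cap J_2}$, $W_{J_1}\cap W_{J_2}=W_{J_1\cup J_2}$, so that $J\mapsto W_J$ embeds the Boolean lattice on $\{0,\dots,s\}$ (order-reversingly) as a distributive sublattice of the submodule lattice of $\Delta_n(\lambda)$. The submodule property is the argument of Theorem~\ref{thm5:Rad3}: left multiplication of $\sigma(\bigotimes a_i)$ by a bubble diagram $D$ produces $\zeta(\bigotimes D'_i a_i)$ for some $\zeta\in\widehat{\Sym}_{n,m}$, and since $\RV_{\mu_i,p_i,\delta_i}\subsetneq\V_{\mu_i,p_i}$ one can always arrange for some colour to be unaffected, which forces each radical factor to stay a form-radical. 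The lattice identities hold block-by-block, because on a single localised block the relevant spaces are genuine tensor products carrying the chain $0\subseteq\RV_i\subseteq\V_i$ in each coordinate, so $\RV_i+\V_i=\V_i$ and $\RV_i\cap\V_i=\RV_i$; distinct blocks do not interfere since half-diagrams with different colour distributions pair to zero under the cell form, by \eqref{eq5:InnerProd}. Setting $U_k:=\sum_{|J|=k}W_J$, distributivity then yields a descending chain of submodules $\Delta_n(\lambda)=U_0\supseteq U_1\supseteq\dots\supseteq U_{s+1}=W_{\{0,\dots,s\}}\supseteq U_{s+2}=0$, with $W_J\cap U_{k+1}=\sum_{i\notin J}W_{J\cup\{i\}}$ for $|J|=k$, whence $U_k/U_{k+1}\cong\bigoplus_{|J|=k}\bigl(W_J/\sum_{i\notin J}W_{J\cup\{i\}}\bigr)$.

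Third, I would identify $\bar W_J:=W_J/\sum_{i\notin J}W_{J\cup\{i\}}$. For $i\le s$, the hypotheses $\lambda_i+t_i+1\equiv 0\pmod{\mathbf{l}_i}$ and $0<t_i<\mathbf{l}_i$ make $(\lambda_i+2u_i,u_i)$ non-critical with $r_{(\lambda_i+2u_i,u_i)}=\mathbf{l}_i-t_i$, so Theorem~\ref{thm1:cellhom2} and Proposition~\ref{thm1:radheadDim} give that $\RV_{\lambda_i+2u_i,u_i,\delta_i}$ is irreducible and, as $\mathsf{TL}_{\lambda_i+2u_i}(\delta_i)$-module, isomorphic to $\Lcell_{(\lambda_i+2t_i)+2(u_i-t_i),\,u_i-t_i,\,\delta_i}$ (and it is zero when $u_i<t_i$). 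Passing to $\bar W_J$ also replaces $\V_{\lambda_i+2u_i,u_i}$ by $\Lcell_{\lambda_i+2u_i,u_i,\delta_i}$ for $i\notin J$ (for $i>s$ this is no change, since $\V=\Lcell$ there). Re-indexing the block sum by $w_i=u_i-t_i$ for $i\in J$ and $w_i=u_i$ otherwise, the corollary to Theorem~\ref{thm3:rad7} — the expansion $\Lcell_n(\mu)=\sum_{w}\sum_{\sigma}\sigma(\bigotimes_i\Lcell_{\mu_i+2w_i,w_i,\delta_i})$ — identifies $\bar W_J\cong\Lcell_n(\lambda^{(J)})$, where $\lambda^{(J)}_i=\lambda_i+2t_i$ for $i\in J$ and $\lambda^{(J)}_i=\lambda_i$ otherwise; in particular each $\bar W_J$ is irreducible, and it is zero exactly when $\sum_i\lambda^{(J)}_i>n$. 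Since $\{\lambda^{(J)}:|J|=k\}=\Xi_k$, the chain $U_\bullet$ has semisimple subquotients $U_k/U_{k+1}\cong\bigoplus_{\lambda'\in\Xi_k}\Lcell_n(\lambda')$.

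Finally, since the radical series is the fastest descending filtration with semisimple subquotients, $\Rad^k(\Delta_n(\lambda))\subseteq U_k$ for all $k$, which already gives $\Rad^{s+2}(\Delta_n(\lambda))=0$ and hence the bound on the length. To obtain the precise layers I would prove the reverse inclusions $U_k\subseteq\Rad^k(\Delta_n(\lambda))$ by induction on $s$; the case $s=0$ is immediate because $\Rad(\Delta_n(\lambda))=W_{\{0\}}$ is then a sum of images of tensor products of irreducibles, hence semisimple. For the step, localising at $1_{\muu}$ turns the problem into computing the radical series of $\bigotimes_{i=0}^{m-1}\V_{\mu_i,p_i}$ over $\bigotimes_{i=0}^{m-1}\mathsf{TL}_{\mu_i}(\delta_i)$, where each tensor factor has radical length $\le 1$ — the factors with $i\le s$ are uniserial with head $\Lcell_{\mu_i,p_i,\delta_i}$ and socle the irreducible $\RV_{\mu_i,p_i,\delta_i}$, the factors with $i>s$ are simple — so the identity $\Rad^k(M\otimes M')=\sum_{a+b=k}\Rad^aM\otimes\Rad^bM'$ (valid over $\mathbb{C}$, where a tensor product of semisimple modules is semisimple) computes the block radical series exactly, and re-summing over blocks and over $\widehat{\Sym}_{n,m}$ as above gives $\Rad^k(\Delta_n(\lambda))=U_k$. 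Alternatively one closes the argument by a dimension count: the generalisation of Lemma~\ref{thm5:dimRad} obtained from Theorem~\ref{thm5:77} gives $\dim(U_k/U_{k+1})=\sum_{\lambda'\in\Xi_k}\dim\Lcell_n(\lambda')$, and since $\Rad^\bullet$ and $U_\bullet$ must account for the same total dimension, the inclusions $\Rad^k\subseteq U_k$ are forced to be equalities. I expect this transfer step to be the main obstacle: unlike $\Delta_n(\lambda)$ itself, the submodules $\Rad^k(\Delta_n(\lambda))$ with $k\ge 1$ carry no contravariant form (the cell form vanishes on $\Rad(\Delta_n(\lambda))$), so the pairing argument of Theorem~\ref{thm5:rad} cannot simply be iterated and must be replaced by the lattice/distributivity analysis above.
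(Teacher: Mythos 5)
Your proposal follows essentially the same route as the paper: decompose $\Rad(\Delta_n(\lambda))$ via Theorem~\ref{thm3:rad7} into submodules $W_J$ indexed by subsets of $\{0,\dots,s\}$ (the paper's $W_{i_1,\dots,i_k}$), filter by $U_k=\sum_{|J|=k}W_J$ (the paper's $W^k$), and identify each subquotient with $\Lcell_n(\lambda')$ through the Temperley--Lieb isomorphisms $\Lcell\cong\RV$; you are in fact more explicit than the paper about why this filtration coincides with the radical filtration rather than merely containing it. One caution: your fallback ``dimension count'' cannot close that last step, since any two exhaustive filtrations of $\Delta_n(\lambda)$ account for the same total dimension, so you must rely on the localisation/tensor-radical argument (or on $\Rad(N)=J\cdot N$ applied to each $W_J$), which is precisely the point the paper itself leaves implicit.
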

\begin{proof}
From Theorem \ref{thm3:rad7}, we have 
\begin{align*} 
 \Rad ( \Delta_n( \lambda )  ) =    \sum\limits_{ u \in \Gamma_{(v,m)} } \;  \sum\limits_{ \sigma  \in \widehat{\Sym}_{n,m} }  \sum\limits_{i=0}^{s}  \sigma  \big( \,\V_{ 0 } \otimes \cdots \otimes \V_{i-1} \otimes  \RV_{ i }  \otimes  \V_{ i+1 } \otimes \cdots \otimes \V_{ m-1} \, \big) ,
\end{align*}
where $ \RV_{ i } := \RV_{ \lambda_i+2u_i,u_i , \delta_i }$ and $  \V_{ i }  :=  \V_{ \lambda_i+2u_i,u_i } $, since $ \RV_{i}= \{ 0 \} $ for each $ i>s $. Define $ W_i $  to be 
\[ W_i = \sum\limits_{ u \in \Gamma_{(v,m)} } \;  \sum\limits_{ \sigma  \in \widehat{\Sym}_{n,m} }  \sigma  \big( \,\V_{ 0 } \otimes \cdots \otimes \V_{i-1} \otimes  \RV_{ i }  \otimes  \V_{ i+1 } \otimes \cdots \otimes \V_{ m-1}  \big) , \]
 where $ 0 \leq i \leq s $. Note that $ W_i $ is a sub-module of $  \Rad ( \Delta_n( \lambda )  ) $ for each $ i $, the proof is similar to the one in Theorem \ref{thm5:Rad3}. Also define the modules $ W_{i_1, \ldots, i_k} $ and $ W^{k} $, where $ 0 \leq i_h \leq s $ for each $ h $ and $ i_h \neq i_{h'} $ for each $ h \neq h' $ where $ k=1, \dots , s+1 $, to be 
\begin{align*}
 W_{i_1, \ldots, i_k}  = \bigcap\limits_{h=1}^{k} W_{i_h},  \;\;\;\;\;\;\;\;\;\;  W^{k} = \sum\limits_{(i_1, \ldots, i_k)} W_{i_1, \ldots, i_k}.
 \end{align*}
From their definitions, it is clear that $ \sum\limits_{ i_k} W_{i_1, \ldots, i_k } \subseteq W_{i_1, \ldots, i_{k-1}}$, thus $ W^{k} \subseteq W^{k-1} $.

 We are going to prove that $  \Rad^k ( \Delta_n( \lambda )  ) = W^k $, by using induction where it is clear that $  \Rad ( \Delta_n( \lambda )  ) = W^1 $ and $ W^{k+1} \subseteq W^{k} $, we only need to show $ W^k/ W^{k+1} \cong \bigoplus\limits_{ \lambda' \in \Xi_k } \Lcell_n ( \lambda' ) $:
\begin{align*}
\frac{W^k}{W^{k+1} } = \frac{\sum\limits_{(i_1, \ldots, i_k)} W_{i_1, \ldots, i_k}}{\sum\limits_{(j_1, \ldots, j_{k+1} )} W_{j_1, \ldots, j_{k+1}}}  \cong \bigoplus\limits_{(i_1, \ldots, i_k)} \frac{W_{i_1, \ldots, i_k}}{ \sum\limits_{i_{k+1}} W_{i_1, \ldots, i_k, i_{k+1}} }.
\end{align*}
Without loss generality, we will just compute $ W_{0, \ldots, k-1}/( \sum\limits_{i=k}^s W_{0, \ldots,k-1, i} ) $ which equals 
\begin{align*}
\frac{\sum\limits_{ u } \;  \sum\limits_{ \sigma }  \sigma  \big( \,\bigotimes\limits_{j=0}^{k-1} \RV_{ j } \otimes \bigotimes\limits_{h=k}^{m-1} \V_{ h} \, \big) }{ \sum\limits_{i=k}^s \sum\limits_{ u  } \;  \sum\limits_{ \sigma }  \sigma  \big( \, \bigotimes\limits_{j=0}^{k-1} \RV_{ j }  \otimes  \V_{k }  \otimes \cdots \otimes \V_{i-1} \otimes \RV_i  \otimes  \V_{i+1 }  \otimes \cdots \otimes \V_{ m-1} \, \big) },
\end{align*}
this module is isomorphic to 
\[ Z:= \sum_{ u \in \Gamma_{(v,m)} } \;  \sum_{ \sigma \in \widehat{\Sym}_{n,m} }  \sigma  \big( \,\bigotimes\limits_{j=0}^{k-1} \RV_{ j } \otimes \bigotimes\limits_{l=k}^{s} \Lcell_{ h} \otimes \bigotimes\limits_{l=s+1}^{m-1} \V_{ l}  \, \big),\]
 where $ \Lcell_{ i } := \Lcell_{ \lambda_i+2u_i,u_i , \delta_i }$. Since $ \V_l \cong \Lcell_l $ for each $ l>s $, and from Theorem \ref{thm1:cellhom2} there is a non-zero homomorphism from $\Lcell_{ \lambda_i+2t_i+2u_i,u_i , \delta_i }$ to $ \RV_{ \lambda_i+2u_i,u_i , \delta_i } $ for each $ i >k $. Hence we can define a non-zero homomorphism from $ \Lcell_n ( \lambda') $ to $ Z $, also  we can show that they have the same dimension, so they are isomorphic by using the first isomorphism theorem, where $ \lambda'= ( \lambda_0+2t_0, \dots , \lambda_{k-1}+2t_{k-1}, \lambda_k, \dots , \lambda_{m-1}) $. It is clear that $ \lambda' \in \Xi_k  $ and by taking all the possibilities of the tuple $ (i_1, \dots , i_k) $ we will obtain all the elements in the set $ \Xi_k  $, we are done. 
\end{proof}

\bibliographystyle{plain}

\begin{thebibliography}{99}
\bibitem{CMPX}
A. Cox, P.  Martin, A. Parker and C.  Xi, \emph{{R}epresentation theory of towers of recollement: theory, notes, and examples}, J. of Algebra,$ 320(1): 340\-- 360$, 2006.

\bibitem{GL1} 
J. Graham and G. Lehrer, \emph{{C}ellular algebras}. Inventiones Mathematicae, $123(1): 1\--34$, 1996.

\bibitem{GM2}
U. Grimm and P. Martin, \emph{The bubble algebra: structure of a two-colour {T}emperley\-- {L}ieb Algebra},J. of Physics, $ 36(42): 10551 \-- 10571$, 2003.

\bibitem{JM}
 M.~Jegan, \emph{{H}omomorphisms between bubble algebra modules}, PhD thesis, City university London, 2013.

\bibitem{M4}
P. Martin, \emph{{P}otts models and related problems in statistical mechanics},volume 5. World Scientific, 1991.

\bibitem{MGP}  
P. Martin, R.  Green and A.  Parker, \emph{Towers of recollement and bases for diagram algebras: planar diagrams and a little beyond}, J. of Algebra, $316(1): 392 \-- 452$, 2007.

\bibitem {RS1}
D. Ridout and Y. Saint, \emph{{S}tandard modules, induction and the structure of the {T}emperley-{L}ieb algebra}, Advances in Theoretical and Mathematical Physics, $ 18(5): 957 \-- 1041$, 2014.


\bibitem {W1}
 B. Westbury, \emph{The representation theory of the {T}emperley \-- {L}ieb algebras}, Mathematische Zeitschrift, $ 219(1): 539\--5654$, 1995.
\end{thebibliography}

\end{document}